\numberwithin{equation}{section}
\newtheorem{theorem}{Theorem}[section]
\newtheorem{lemma}[theorem]{Lemma}
\newtheorem{proposition}[theorem]{Proposition}
\theoremstyle{definition}
\newtheorem{remark}[theorem]{Remark}
\newcommand{\R}{\mathbb{R}}
\def\r3{\mathbb{R}^3}
\newcommand{\vel}{v}
\newcommand{\bv}{\langle v\rangle}
\newcommand{\FP}{\mathbf{P}}
\newcommand{\FI}{\mathbf{I}}
\newcommand{\na}{\nabla}
\newcommand{\al}{\alpha}
\newcommand{\be}{\beta}
\newcommand{\ga}{\gamma}
\newcommand{\la}{\lambda}
\newcommand{\si}{\sigma}
\newcommand{\pa}{\partial}
\newcommand{\Ga}{\Gamma}
\newcommand{\testF}{C_{c}^{\infty}(\R^3_x \times \R^3_\vel)}
\providecommand{\norm}[1]{\left\Vert#1\right\Vert}
\providecommand{\norms}[1]{\left\vert#1\right\vert}
\def\Llangle{\left \langle}
\def\Rrangle{\right \rangle}
\begin{document}                        

\title[The VPL system in $\r3$]{Global solution and time decay of the Vlasov-Poisson-Landau system in $\r3$}

\author{Yanjin Wang}
\address{School of Mathematical Sciences\\
Xiamen University\\
Xiamen, Fujian 361005, China}
\email{yanjin$\_$wang@xmu.edu.cn}

\thanks{Partially supported by National Natural Science Foundation of China-NSAF (No. 10976026)}

\keywords{Vlasov-Poisson-Landau system; Global solution; Time decay rate; Energy method.}
\subjclass[2000]{82C40; 82D05; 82D10; 35B40}

\begin{abstract}
We construct the global unique solution near a global Maxwellian to the Vlasov-Poisson-Landau system in the whole space. The total density of
two species of particles decays at the optimal algebraic rates as the Landau equation in the whole space, but the disparity between two species and the electric potential decay at the faster rates as the Vlasov-Poisson-Landau system in a periodic box.
\end{abstract}

\maketitle


\section{Introduction}\label{intro}
The dynamics of charged dilute particles (e.g., electrons and ions) in the absence of magnetic effects can be described by the
Vlasov-Poisson-Landau system:
\begin{equation}\label{VPL}
\begin{split}
&\partial_tF_++v\cdot\nabla_xF_+-\nabla_x\phi\cdot\nabla_vF_+=Q(F_+,F_+)+Q(F_-,F_+),
\\&\partial_tF_-+v\cdot\nabla_xF_-+\nabla_x\phi\cdot\nabla_vF_-=Q(F_+,F_-)+Q(F_-,F_-),
\\&-\Delta_x\phi=\int_{\mathbb{R}^{3}}\left(F_+-F_-\right)\,dv,
\\ &F_\pm(0,x,v)=F_{0,\pm}(x,v).
\end{split}
\end{equation}
Here $F_\pm(t,x,v)\ge 0$ are the number density functions for the ions $(+)$ and electrons $(-)$
respectively, at time $t\ge 0$, position $x=(x_1,x_2,x_3)\in \mathbb{R}^3$ and velocity $v=(v_1,v_2,v_3)\in \mathbb{R}^3$. The self-consistent
electric potential $\phi(t,x)$ is coupled with $F_\pm(t,x,v)$ through the Poisson equation. The collision between charged particles is given by the Landau (Fokker-Planck) operator:
\begin{equation}\label{collision}
Q(G_1,G_2)(v)
=\nabla _{v}\cdot \int_{\mathbb{R}^{3}}\Phi (v-v')\left(G_1(v')\nabla _{v}G_2(v)
- G_2(v)\nabla _{v'}G_1(v')\right)dv',
\end{equation}
where
\begin{equation}\label{kernel}
\Phi (v)=\frac{1}{|v|}\left( I-\frac{v\otimes v}{|v|^{2}}\right).
\end{equation}%
Since all the physical constants will not create essential mathematical difficulties along our analysis, for notational simplicity, we have normalized all constants in the Vlasov-Poisson-Landau system to be one. Accordingly, we normalize the global Maxwellian as
\begin{equation}
\mu(v)\equiv\mu_+(v)=\mu_-(v)=  {\rm e}^{-|v|^2}.
\end{equation}

We define the standard perturbation $f_{\pm }(t,x,v)$ to $\mu $ as
\begin{equation}
F_{\pm }=\mu +\sqrt{\mu }f_{\pm }.  \label{f}
\end{equation}%
Letting $f(t,x,v)=\binom{f_{+}(t,x,v)}{f_{-}(t,x,v)}$,
the Vlasov-Poisson-Landau system for the perturbation now takes the form
\begin{equation}\label{VPL_per}
\begin{split}
&\left\{\partial _{t}+v\cdot \nabla _{x}\mp \nabla_x\phi\cdot \nabla _{v}\right\}f_{\pm }
\pm 2
 \nabla_x\phi \cdot v \sqrt{\mu }
+
L_{\pm }f
=\Gamma _{\pm
}(f,f)\mp \nabla_x\phi\cdot v f_{\pm },
\\&
-\Delta_x \phi =\int \sqrt{\mu}(f_{+}-f_{-})\,dv.
\end{split}
\end{equation}
For any $g=\binom{{g_{1}}}{{g_{2}}}$, the linearized collision operator $Lg$ in \eqref{VPL_per} is given by the vector
\begin{equation}\label{L}
Lg\equiv \binom{L_{+}g\;}{L_{-}g}\equiv - \binom{2A_\ast g_{1} +K_\ast( g_{1}+g_{2})}{2A_\ast g_{2} +K_\ast( g_{1}+g_{2})}
\end{equation}
with $A_\ast, K_\ast$ defined by
\begin{equation}
A_\ast \varphi\equiv\frac{1}{\sqrt{\mu}}Q(\mu,\sqrt{\mu}\varphi),\quad  {K}_\ast \varphi\equiv\frac{1}{\sqrt{\mu}}Q(\sqrt{\mu}\varphi,\mu).
\end{equation}
For $g=\binom{{g_{1}}}{{g_{2}}}$ and $h=\binom{{h_{1}}}{{h_{2}}}$, the nonlinear collision
operator $\Gamma (g,h)$ in \eqref{VPL_per} is given by the vector
\begin{equation}\label{Gamma}
\Gamma (g,h)\equiv \binom{\Gamma _{+}(g,h)}{\Gamma _{-}(g,h)}\equiv  \binom{\Gamma_\ast(g_1+g_2,h_1)}{\Gamma_\ast(g_1+g_2,h_2)}
\end{equation}
with $\Gamma_\ast$ defined by
\begin{equation}
\Gamma_\ast(\varphi,\psi)\equiv\frac{1}{\sqrt{\mu}}Q(\sqrt{\mu}\varphi,\sqrt{\mu}\psi).
\end{equation}

\hspace{-12pt}{\bf Notation.} For notational simplicity, we use $\langle\cdot,\cdot\rangle$ to denote the $L^2$ inner product in $\mathbb{R}^3_v$, while we
use $(\cdot,\cdot)$ to denote the $L^2$ inner product in either $\mathbb{R}^3_x\times\mathbb{R}^3_v$ or $\mathbb{R}^3_x$ without any ambiguity.
Sometimes, we shall use $\int g$ to denote the integration of $g$ over $\mathbb{R}^3_x\times\mathbb{R}^3_v$ or $\mathbb{R}^3_x$.
Letting the multi-indices $\alpha $ and $\beta $ be $\alpha =[\alpha _{1},\alpha
_{2},\alpha _{3}]$, $\beta =[\beta _{1},\beta _{2},\beta _{3}]$,  we
define $\partial _{\beta }^{\alpha }\equiv \partial _{x_{1}}^{\alpha
_{1}}\partial _{x_{2}}^{\alpha _{2}}\partial _{x_{3}}^{\alpha _{3}}\partial
_{v_{1}}^{\beta _{1}}\partial _{v_{2}}^{\beta _{2}}\partial _{v_{3}}^{\beta
_{3}}.$  If each component of $\theta$ is not greater than that of $\bar{\theta}$'s, we denote by $\theta \leq \bar{\theta};$ $\theta <\bar{\theta}$
means $\theta \leq \bar{\theta},$ and $|\theta |<|\bar{\theta}|$ where $|\theta | = \theta_1 +  \theta_2 +  \theta_3$.
We use $\nabla^\ell$ with an integer $\ell\ge0$ for the  any $\pa^\al$ with $|\al|=\ell$. When
$\ell<0$ or $\ell$ is not a positive integer, $\nabla^\ell$ stands for $\Lambda^\ell$ defined by
\begin{equation}\label{Lambda s}
\Lambda^s f(x)=\int_{\mathbb{R}^3}|\xi|^s\hat{f}(\xi)e^{2\pi ix\cdot\xi}\,d\xi,
\end{equation}
where $\hat{f}$ is the  Fourier transform of $f$. We use $\dot{H}^s(\mathbb{R}^3),
s\in \mathbb{R}$ to denote the homogeneous Sobolev spaces on $\mathbb{R}^3$ with norm defined by
$\norm{f}_{\dot{H}^s}=\norm{\Lambda^s f}_{L^2}$, and we use $H^s(\mathbb{R}^3) $ to denote the usual Sobolev spaces with norm $\norm{\cdot}_{H^s}$. We shall use $\norm{\cdot}_{p}$ to denote $L^{p}$ norms
in either $\mathbb{R}^{3}_x\times \mathbb{R}^{3}_v$ or  $\R^3_x$. Letting $w(v)\geq 1$ be a weight function, we use $\norm{\cdot }_{p,w}$ to
denote the weighted $L^{p}$ norms in $\mathbb{R}^{3}_x\times \mathbb{R}^{3}_v$. We also use $\norms{\cdot}_{p}$ for the $L^{p}$ norms
in $\mathbb{R}^{3}_v$, and $\norms{\cdot}_{p,w}$ for the weighted $L^{p}$ norms in $\mathbb{R}^{3}_v$. We will use the mixed spatial-velocity spaces, e.g., $L^2_v H^s_x=L^2(\r3_v;H^s(\r3_x))$, etc.

Throughout the paper  we let $C$  denote
some positive (generally large) universal constants and $\lambda$ denote  some positive (generally small) universal constants. They {\it do not} depend on either $l$ or $m$; otherwise, we will denote them by $C_l$, $C_{l,m}$, etc.
We will use $A \lesssim B$ ($A \gtrsim B$ and $A\sim B$) if $A \le C B$.
We use $C_0$ and $\lambda_0$ to denote the constants depending on the initial data and $l,m,s$.

For the Landau operator \eqref{collision}, we define
\begin{equation}
\sigma^{ij}(v)=\Phi^{ij}\ast\mu=\int_{\r3}\Phi^{ij}(v-v')\mu(v')\,dv'.
\end{equation}
 We define the weighted norms
\begin{equation}\label{sigma norm}
\norms{f}_{\sigma ,w}=\int_{\r3}w^2\left[\sigma^{ij}\partial_if\partial_jf+\sigma^{ij}v_iv_jf^2\right]dv  \text{ and }\norm{f}_{\sigma ,w}=\norm{\norms{f}_{\sigma ,w}}_2.
\end{equation}
From Lemma 3 in \cite{G02}, we have
\begin{equation}\label{sigma norm =}
\norms{f}_{\sigma ,w}\sim \norms{\langle v\rangle^{-\frac{1}{2}} f}_{2,w}
+\norms{\langle v\rangle^{-\frac{3}{2}} \na_vf\cdot \frac{v}{|v|}}_{2,w}
+\norms{\langle v\rangle^{-\frac{1}{2}} \na_vf\times \frac{v}{|v|}}_{2,w}
\end{equation}
with $\bv=\sqrt{1+|v|^2}$. Let $\norms{f}_{\sigma}=\norms{f}_{\sigma ,1}$ and $\norm{f}_{\sigma}=\norm{f}_{\sigma ,1}$. It is well known that the linear collision operator $L\ge 0$ and is locally coercive in the sense that
\begin{equation}\label{positive L}
\langle Lf,f\rangle\gtrsim \norms{\{\FI-\FP\} f}_\sigma^2,
\end{equation}
where $\FP$ denotes the $L^2_v$ orthogonal projection on the null space of $L$:
\begin{equation}\label{null L}
N(L)\equiv
\mathrm{span}\left\{ \sqrt{\mu }\binom{1}{0},\;\sqrt{\mu }\binom{0}{1}%
,\;v\sqrt{\mu }\binom{1}{1},\;|v|^{2}\sqrt{\mu }\binom{1}{1}\right\}.
\end{equation}

As in \cite{G12}, we define the following velocity weight
\begin{equation}  \label{weight}
w(\alpha ,\beta )(v)\equiv e^{\frac{q|v|^2}{2}}
\langle v\rangle^{2(l-|\alpha |-|\beta |)},\text{ \ \ \ }l\geq |\alpha |+|\beta |,\quad 0\le q\ll 1.
\end{equation}
Letting $l\ge m$, we define the energy functional by
\begin{equation} \label{energy}
\mathcal{E}_{m;l,q}(f)
= \sum_{|\alpha |+|\beta |\leq m}
\norm{ \partial _{\beta }^{\alpha }f}_{2,w(\alpha ,\beta )}^{2}+
\norm{\nabla_x\phi}_2^2,
\end{equation}
and the corresponding dissipation rate by
\begin{equation}\label{dissipation}
\mathcal{D}_{m;l,q}(f)
\equiv \sum_{|\alpha |+|\beta |\leq m}\norm{ \partial_{\beta }^{\alpha }\left\{{\bf I-P}\right\}f}_{\sigma,w(\alpha ,\beta )}^{2}
+\sum_{1\le |\alpha|\le m}
\norm{\partial^\alpha {\bf P }f }_2^2+\norm{\nabla_x\phi
}^2_2+ \norm{(f_+-f_-)}_2^2.
\end{equation}
We also define
\begin{equation}\label{dissipation others}
\widetilde{\mathcal{D}}_{m;l,q}(f)\equiv \mathcal{D}_{m;l,q}(f)-\norm{(f_+-f_-)}_2^2\text{ and }\overline{\mathcal{D}}_{m;l,q}(f)\equiv \widetilde{\mathcal{D}}_{m;l,q}(f)+\norm{ {\bf P }f }_2^2.
\end{equation}
We remark that $\widetilde{\mathcal{D}}_{m;l,q}(f)$ and $\overline{\mathcal{D}}_{m;l,q}(f)$ are the dissipations used in \cite{SZ12} and \cite{G12} respectively, and we introduce them for the presentational convenience. Note that there is a cascade of velocity weights in \eqref{energy} and \eqref{dissipation} so that fewer derivatives of $f$ demand stronger velocity weights. Our first main result of the global unique solution  to the system \eqref{VPL_per} is stated as follows.
\begin{theorem}\label{main theorem}
Assume that $f_{0}$ satisfies $F_{0,\pm }(x,v)=\mu +\sqrt{\mu }f_{0,\pm
}(x,v)\geq 0.$ There exists a sufficiently small $M>0$ such that if
$\mathcal{ {E}}_{2;2,0}({f}_{0})\leq M$, then there exists a unique global solution $f(t,x,v)$ to the
Vlasov-Poisson-Landau system \eqref{VPL_per} with $F_{\pm
}(t,x,v)=\mu +\sqrt{\mu }f_{\pm }(t,x,v)\geq 0.$

(1) If $\mathcal{ {E}}_{2;l,q}({f}_{0})<+\infty$ for $l\ge 2,\ 0\le q\ll 1$, then there exists $C_l>0$ such that
\begin{equation}\label{energy inequality}
\sup_{0\le t\le \infty} \mathcal{ {E}}_{2;l,q}(f(t))+\int_0^\infty \mathcal{ {D}}_{2;l,q}(f(\tau))\,d\tau\le C_l \mathcal{ {E}}_{2;l,q}(f_0).
\end{equation}
Furthermore,
\begin{equation}\label{polynomial decay}
\begin{split}
&\norm{\partial _{t}\phi (t)}_{\infty }+\norm{\nabla _{x}\phi (t)}_{\infty}+\norm{\nabla _{x}\phi (t)}_{2}+\sum_{k=0,1}\norm{\na^k(f_+-f_-)(t)}_2
\\&\quad \leq
 C_{l}(1+t)^{-2(l-1)} \sqrt{\mathcal{ {E}}_{2;l,0}(f_0)}
 \end{split}
\end{equation}
and
\begin{equation}\label{exponential decay}
\begin{split}
&\norm{\partial _{t}\phi (t)}_{\infty }+\norm{\nabla _{x}\phi (t)}_{\infty}+\norm{\nabla _{x}\phi (t)}_{2}+\sum_{k=0,1}\norm{\na^k(f_+-f_-)(t)}_2
\\&\quad \le
 C_{l}e^{-C_lt^{2/3}} \sqrt{\mathcal{ {E}}_{2;l,q}(f_0)}\text{ for }0<q\ll 1.
  \end{split}
\end{equation}

(2) In addition, if $\mathcal{ {E}}_{m;l,q}(f_{0})<\infty $ for any $%
l>2$, $l\geq m \geq 2$, $0\le q\ll 1$, there exists an increasing continuous function
$P_{m,l}(\cdot )$ with $P_{m,l}(0 )=0$ such that the unique solution satisfies
\begin{equation}\label{energy inequality m}
\sup_{0\leq t\leq \infty }\mathcal{ {E}}_{m;l,q}(f(t))+\int_{0}^{\infty }%
\mathcal{D}_{m;l,q}(f(\tau))\,d\tau\leq P_{m,l}(\mathcal{ {E}}_{m;l,q}(f_{0}) ).
\end{equation}
\end{theorem}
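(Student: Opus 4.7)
The strategy follows the by-now standard perturbative framework for kinetic equations, combining Guo's cascade-weighted energy method (which encodes $w(\alpha,\beta)$) with macroscopic dissipation analysis and a negative Sobolev norm argument for time decay. First I would construct local solutions by a contraction-mapping iteration for the perturbed system \eqref{VPL_per}, tracking $\mathcal{E}_{2;2,0}$; a standard continuity argument then upgrades local to global once the a priori estimate $\frac{d}{dt}\mathcal{E}_{m;l,q}(f) + \lambda \mathcal{D}_{m;l,q}(f) \leq 0$ (modulo small cubic terms absorbed by smallness of $\mathcal{E}_{2;2,0}$) is in hand. Positivity $F_\pm \geq 0$ follows from the usual truncation/maximum-principle argument applied to the constructed solution.

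To obtain the a priori inequality, I would apply $\partial_\beta^\alpha$ to \eqref{VPL_per}, pair with $w^2(\alpha,\beta)\partial_\beta^\alpha f$, and sum over $|\alpha|+|\beta|\leq m$. The weighted local coercivity $\langle L\partial_\beta^\alpha f,w^2\partial_\beta^\alpha f\rangle \gtrsim \|\partial_\beta^\alpha \{\mathbf{I-P}\}f\|_{\sigma,w(\alpha,\beta)}^2$ (a weighted version of \eqref{positive L}, as proved by Guo) produces the microscopic dissipation. The transport commutators $\nabla_x\phi\cdot\nabla_v f_\pm$, the drift $\nabla_x\phi\cdot v f_\pm$, and $\Gamma(f,f)$ are controlled via weighted trilinear estimates, with the electrostatic coupling handled using $\|\nabla_x\phi\|_{H^1}\lesssim \|f_+-f_-\|_2$ from the Poisson equation. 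The cascade structure $l-|\alpha|-|\beta|$ in \eqref{weight} is essential because higher-order $v$-derivatives of $\Gamma$ produce growing velocity weights that must be swallowed by weaker weights attached to fewer derivatives. To recover the macroscopic part of the dissipation---$\sum_{|\alpha|\geq 1}\|\partial^\alpha \mathbf{P}f\|_2^2$, $\|\nabla_x\phi\|_2^2$, and $\|f_+-f_-\|_2^2$---I would expand $\mathbf{P}f = \{a_\pm + b\cdot v + c(|v|^2-3)\}\sqrt{\mu}$, derive fluid-type equations for $(a_\pm,b,c)$ by taking moments of \eqref{VPL_per}, and produce coercive estimates in a suitable local Morawetz/Kawashima-style functional; the Poisson equation converts control of $a_+-a_-$ into control of $\nabla_x\phi$, and the coupling $\mp 2\nabla_x\phi\cdot v\sqrt{\mu}$ supplies the extra $\|f_+-f_-\|_2^2$ dissipation that is absent from the periodic/Landau setting.

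The main obstacle is the time decay \eqref{polynomial decay}--\eqref{exponential decay}, because $\mathcal{D}_{m;l,q}$ misses the zero-order dissipation of $\mathbf{P}f$ itself on $\r3$, so one cannot close a Gronwall argument directly against $\mathcal{E}_{m;l,q}$. My plan is a two-layer time-weighted scheme. First I would propagate the negative Sobolev norm $\|\Lambda^{-s}f\|_2$ for some $s\in(0,3/2]$ uniformly in time, using that the transport, Poisson, and $\Gamma$ nonlinearities are controlled in $\dot H^{-s}$ by the energy and a Hardy--Littlewood--Sobolev/product estimate. Combined with the interpolation $\|f\|_2\lesssim \|\Lambda^{-s}f\|_2^{\theta}\|\nabla f\|_2^{1-\theta}$ and $\|\nabla f\|_2^2\lesssim \mathcal{D}_{m;l,q}$, this upgrades the differential inequality to $\frac{d}{dt}\mathcal E \leq -C\mathcal E^{1+1/\theta}$ on the part controlled by the dissipation, producing algebraic decay of the total $\mathcal{E}_{m;l,q}$. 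Second, for the disparity $f_+-f_-$ and $\nabla_x\phi$, I would exploit that $\|f_+-f_-\|_2^2$ and $\|\nabla\phi\|_2^2$ both appear as \emph{zero-order} terms of $\mathcal{D}_{m;l,q}$, so a separate time-weighted energy with weight $(1+t)^{k}$ and the pointwise estimate $\|\nabla_x\phi\|_\infty\lesssim \|\nabla^2\phi\|_2^{1/2}\|\nabla\phi\|_2^{1/2}+\cdots$ yields the enhanced rate $(1+t)^{-2(l-1)}$ after an iteration on $l$ exploiting the cascade in \eqref{weight}. The exponential-of-$t^{2/3}$ refinement \eqref{exponential decay} follows by replacing the polynomial weight with the stretched exponential $e^{\lambda t^{2/3}}$; the extra factor $e^{q|v|^2/2}$ in $w(\alpha,\beta)$ is precisely what makes $\|\bv\partial^\alpha\{\mathbf{I-P}\}f\|_{2,w}\lesssim \|\{\mathbf{I-P}\}f\|_{\sigma,w}$ hold with an arbitrary power of $\bv$ absorbed, providing the sub-exponential gain through $\int e^{c t^{2/3}}\mathcal D\,d\tau<\infty$.

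Finally, part (2) is obtained by induction on $m$: assuming \eqref{energy inequality} holds with dissipation integrable in time, the same weighted energy estimate at level $m$ produces $\frac{d}{dt}\mathcal{E}_{m;l,q} + \lambda \mathcal{D}_{m;l,q} \lesssim \sqrt{\mathcal{E}_{m;l,q}}\,\mathcal{D}_{m;l,q} + \text{lower-order terms in } \mathcal{E}_{m-1;l,q}$, the latter already integrable by induction, and the polynomial/exponential dependence on $\mathcal{E}_{m;l,q}(f_0)$ stacks into the function $P_{m,l}$. The only genuinely new analytic obstacle throughout is the low-frequency analysis for the disparity: it requires both the macroscopic coercivity produced by the Poisson coupling and the interpolation with $\dot H^{-s}$, and it is the combination of these two ingredients that distinguishes the whole-space theorem \eqref{polynomial decay}--\eqref{exponential decay} from the periodic-box results in the literature.
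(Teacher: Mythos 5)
Your a priori energy framework and the observation that the Poisson coupling furnishes the $\|f_+-f_-\|_2^2$ piece of the dissipation are in the right direction, but the time--decay layer of your plan contains a genuine gap. You propose to propagate $\|\Lambda^{-s}f\|_2$ uniformly and then upgrade the energy inequality to $\frac{d}{dt}\mathcal{E}\le -C\mathcal{E}^{1+1/\theta}$ by interpolating $\|f\|_2$ between $\|\Lambda^{-s}f\|_2$ and $\|\nabla f\|_2$. This step is only available if one \emph{assumes} $\|\Lambda^{-s}f_0\|_2<\infty$ (equivalently, a $L^2_vL^1_x$-type smallness as in Strain--Zhu and Duan--Yang--Zhao), which is precisely the hypothesis that Theorem~\ref{main theorem} does \emph{not} make: the stated assumption is only $\mathcal{E}_{2;l,q}(f_0)<\infty$. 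Removing the low-frequency/$L^1$ assumption is the central novelty of the paper, so the argument cannot lean on it. Moreover the conclusion \eqref{polynomial decay} asserts decay only of $\phi$ and $f_+-f_-$, not of the full $\mathcal{E}_{m;l,q}(f)$ as you claim your scheme produces; no decay of $f_++f_-$ is available under the hypotheses of part~(1).

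What the paper actually does is quite different. Writing $f_1=f_++f_-$ and $f_2=f_+-f_-$, the equations decouple at the linear level: $(f_2,\phi)$ solves a \emph{closed} one-species-like Vlasov--Poisson--Landau system \eqref{f_2 equation} with collision operator $\mathcal{L}_2=-2A_\ast$, whose null space is just $\mathrm{span}\{\sqrt{\mu}\}$. Because the Poisson equation controls $\mathbf{P_2}f_2$ by $\|\nabla_x\phi\|_2+\|\nabla_x\mathbf{P_2}f_2\|_2$, one obtains a \emph{self-contained} differential inequality for an equivalent functional $\mathcal{E}_0^1(f_2)\sim\sum_{k=0,1}\|\nabla^kf_2\|_2^2+\|\nabla_x\phi\|_2^2$,
\begin{equation*}
\frac{d}{dt}\mathcal{E}_0^1(f_2)+\sum_{k=0,1}\|\nabla^kf_2\|_\sigma^2+\|\nabla_x\phi\|_2^2\le 0,
\end{equation*}
with the full zeroth-order $\sigma$-dissipation of $f_2$ on the left (Lemma~\ref{difference energy}). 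Algebraic decay \eqref{polynomial decay} is then extracted not by a negative-Sobolev argument but by Strain--Guo's interpolation among velocity moments: from $\|f_2\|_2\le\|f_2\|_\sigma^{4(l-1)/(4(l-1)+1)}\,\mathcal{E}_{2;l,0}(f)^{1/(2(4(l-1)+1))}$ one converts the inequality to $\frac{d}{dt}\mathcal{E}_0^1(f_2)+C\mathcal{E}_0^1(f_2)^{1+1/(4(l-1))}\le0$, giving $(1+t)^{-4(l-1)}$ for $\mathcal{E}_0^1(f_2)$ and hence $(1+t)^{-2(l-1)}$ for the norms; the stretched exponential \eqref{exponential decay} follows from the velocity--time splitting of Strain--Guo using the $e^{q|v|^2/2}$ factor, which you identified correctly. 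This decay then makes $\int_0^\infty(\|\partial_t\phi\|_\infty+\|\nabla_x\phi\|_\infty)\,dt$ small and closes the Gronwall argument in Proposition~\ref{energy estimate}. You would need to replace your negative Sobolev layer with the closed $f_2$--$\phi$ inequality plus moment interpolation; otherwise the proof does not establish the theorem under its stated hypotheses.
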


Theorem \ref{main theorem} will be proved in sections \ref{energy section} and \ref{global} by using the strategy of \cite{G12} in which Guo proved the first result of the global unique solution near Maxwellians to the Vlasov-Poisson-Landau system in a periodic box. There are two folds in this strategy. First, by the introduction of the exponential weight $e^{\pm(q+1)\phi}$ to cancel the growth of the velocity in the nonlinear term $\mp\na_x\phi\cdot v f_\pm$ and the introduction of the velocity weight \eqref{weight} to capture the weak velocity diffusion in the Landau kernel, assuming that the conservation of mass, momentum as well as energy holds, Guo \cite{G12} first derived the following energy inequality (for $m=l=2,q=0$):
\begin{equation}\label{G12 01}
\mathcal{E}_{2;2,0}(f)+\int_0^t \overline{\mathcal{D}}_{2;2,0}(f)\,ds
\lesssim \mathcal{E}_{2;2,0}(f_0)+\int_0^t\left[\norm{\pa_t\phi }_\infty+\norm{\na_x\phi}_\infty\right]\mathcal{E}_{2;2,0}(f)\,ds.
\end{equation}
The reason why the term $\norm{{\bf P}f}_2^2$ is excluded from our dissipation rate is that the Poincar\'e inequality fails in the whole space. However, this requires us the much more careful arguments in section \ref{energy section} when deriving the energy inequality of type \eqref{G12 01} for the whole space with  $\overline{\mathcal{D}}_{m;l,q}(f)$ replaced by $\widetilde{\mathcal{D}}_{m;l,q}(f)$; we need to prove an improved refined estimate for the nonlinear collision term as Lemma \ref{nonlinear c} in section \ref{pre}. Second, thanks again to the conservation laws and the Poincar\'e inequality in the periodic box, Guo \cite{G12} then derived the following differential inequality:
\begin{equation}\label{G12 02}
\frac{d}{dt}Y+ \sum_{k=0,1} \norm{\na^k f}_\sigma^2\le 0,
\end{equation}
where $Y(t)\sim \sum_{k=0,1} \norm{\na^k f}_2^2$. By applying the method previously developed in Strain and Guo \cite{SG06,SG08}, then a decay rate of the electric potential is extracted from  \eqref{G12 02}:
\begin{equation} \label{G12 03}
\norm{\partial _{t}\phi (t)}_{\infty }+\norm{\nabla _{x}\phi (t)}_{\infty}\lesssim \sqrt{Y(t) } \lesssim  (1+t)^{-2}.
\end{equation}
Hence the energy estimates \eqref{G12 01} is closed by applying the standard Gronwall lemma. However, as remarked in \cite{G12}, the strong decay rate of \eqref{G12 03} (and \eqref{G12 02}) is a consequence of the periodic box (and the conservation laws!), and it remains open if a sufficient decay rate can be obtained for the whole space. To get the sufficient decay rate of $\phi$ in the whole space case, in addition to \eqref{G12 01}, Strain and Zhu \cite{SZ12} further developed another energy inequality:
\begin{equation}\label{SZ12 01}
\frac{d}{dt}\widetilde{\mathcal{E}}^h(f)+\widetilde{\mathcal{D}}^h(f)\lesssim [\norm{\pa_t\phi }_\infty+\norm{\na_x\phi}_\infty]\widetilde{\mathcal{E}}^h(f)+\norm{\na_x\FP f}_2^2,
\end{equation}
where $\widetilde{\mathcal{E}}^h(f)$ denotes some high-order energy functional that does not contain $\norm{\FP f}_2^2$ and $\widetilde{\mathcal{D}}^h(f)$ is the corresponding dissipation. Assuming additionally that the $L^2_vL^1_x$ norm of the initial data is small, by combining these energy estimates and the linear decay analysis, Strain and Zhu \cite{SZ12} obtained a decay rate of
\begin{equation} \label{SZ12 02}
\norm{\partial _{t}\phi (t)}_{\infty }+\norm{\nabla _{x}\phi (t)}_{\infty}
 \lesssim  (1+t)^{-\frac{5}{4}+\varepsilon}\left(\sqrt{\mathcal{E}_{3;l;0}(f_0)}+\norm{f_0}_{L^2_vL^1_x}\right),
\end{equation}
where $\varepsilon=(l-\frac{5}{2})^{-1}\frac{5}{8}\in \left(0, \frac{1}{4}\right)$ if $l>5$.

The $L^2_vL^1_x$ assumption of the initial data seems crucial for the global existence of the solutions to the Vlasov-Poisson-Landau system in the whole space in Strain and Zhu \cite{SZ12}; see also Duan, Yang and Zhao \cite{DYZ11} for the one-species Vlasov-Poisson-Landau system in the whole space. However, as well illustrated in Theorem \ref{main theorem}, we have removed such kind of assumption and our Theorem \ref{main theorem} for the whole space is almost like Theorem 2 for the periodic box case in Guo \cite{G12}. The key motivation is that the real thing we need to close the estimates \eqref{G12 01} is a strong decay rate of $\phi$ rather than the whole solution! Let us look back at the system \eqref{VPL_per}, and we note that $\phi$ depends only on  $f_+-f_-$ and also that there are some cancelations between the ``$+$" and ``$-$" equations. We are then led to consider the sum and difference of $f_+$ and $f_-$:
\begin{equation}
f_1=f_++f_-\text{ and }f_2=f_+-f_-.
\end{equation}
The Vlasov-Poisson-Landau system \eqref{VPL_per} can be equivalently rewritten as
\begin{equation}\label{f_1f_2 equation}
\begin{split}
&\partial_tf_1+ v\cdot\nabla_xf_1+ \mathcal{L}_1f_1=\Gamma_\ast(f_1,f_1)+\nabla_x\phi\cdot \left(\nabla_vf_2-vf_2\right),
 \\ &\partial_tf_2 + v\cdot\nabla_xf_2+4\nabla_x\phi\cdot v\sqrt{\mu} + \mathcal{L}_2 f_2=\Gamma_\ast(f_1,f_2)+\nabla_x\phi\cdot \left(\nabla_vf_1-vf_1\right),
\\ &-\Delta_x\phi=\int_{\r3}f_2\sqrt{\mu}\,dv.
\end{split}
\end{equation}
Here $\mathcal{L}_1=-2(A_\ast+K_\ast)$ is the one-species linearized Landau operator and $\mathcal{L}_2=-2A_\ast$.
Notice that $[\mathcal{L}_1f_1,\mathcal{L}_2f_2]$ is equivalent to $Lf$, and their null spaces are
\begin{equation}
N(\mathcal{L}_1)\equiv
\mathrm{span}\left\{ \sqrt{\mu } ,\; v\sqrt{\mu } ,\;|v|^{2}\sqrt{\mu } \right\}\text{ and }N(\mathcal{L}_2)\equiv
\mathrm{span}\left\{ \sqrt{\mu }\right\}.
\end{equation}
Let ${\bf P_i}$ be the $L^2_v$ orthogonal projection on the null space of $\mathcal{L}_i$ respectively, then
\begin{equation}\label{positive L1L2}
\langle \mathcal{L}_if,f\rangle\ge \delta_0 \norms{\{{\bf I-P_i}\} f}_\sigma^2,\  i=1,2.
\end{equation}
Notice that the linear homogeneous system of \eqref{f_1f_2 equation} is decoupled into two independent subsystems: one is the Landau equation for $f_1$; the other one is a system almost like the one-species Vlasov-Poisson-Landau system for $f_2$ and $\phi$ but with the linear collision operator $\mathcal{L}_2$. Then our key observation is that we can include the full $\norm{f_2}_\sigma^2$ in our dissipation rate \eqref{dissipation}: $\mathcal{L}_2$ controls the microscopic part $\{{\bf I-P_2}\}f_2$ by \eqref{positive L1L2}; while the presence of the electric field can control the hydrodynamic part ${\bf P_2}f_2$! This special coupling effect between $\mathcal{L}_2$ and the Poisson equation and the decoupling make us be able to derive a differential inequality similar as \eqref{G12 02}:
\begin{equation}\label{key inequality}
\frac{d}{dt}\mathcal{E}_0^1(f_2)+\sum_{k=0,1} \norm{\nabla^k f_2}_\sigma^2+\norm{\nabla_x\phi}_2^2 \le 0,
\end{equation}
where $\mathcal{E}_0^1(f_2)(t)\sim \sum_{k=0,1} \norm{\nabla^k f_2}_2^2+\norm{\nabla_x\phi}_2^2 $. Hence, in the whole space we can obtain the same decay rate of $\phi$ as the periodic case in Guo \cite{G12}. It is also interesting to point out that our observation also works for the periodic case, and hence we can still prove the global solution even without the assumption of the conservation laws which is crucial in Guo \cite{G12} so that the Poincar\'e inequality can be employed.

Our second main result is on some further decay rates of the solution to the Vlasov-Poisson-Landau system \eqref{VPL_per} by making the much stronger assumption on the initial data. We remark that our main purpose is to clarify how to derive these decay rates for the solution and its higher-order spatial derivatives, so we will not pursue the optimal spatial regularity and velocity moments and smallness assumptions on the initial data.
\begin{theorem}\label{further decay}
Assume that $f$ is the solution to the
Vlasov-Poisson-Landau system \eqref{VPL_per} constructed in Theorem \ref{main theorem}. Fix $%
l\ge  m\geq 2$, then there exists a sufficiently small $M=M(m,l)$ such that if $\mathcal{ {E}}_{m;l,0}(f_{0})\le M$, then

(1) If $l\ge m+\frac{3}{4}$, then for $\ell=1,\dots,m$,
\begin{equation}\label{polynomial decay k}
\sum_{k=0}^\ell \norm{\na^k(f_+-f_-)(t)}_2
\leq
 C_{l,\ell}(1+t)^{-2(l-\ell)} \sqrt{\mathcal{ {E}}_{m;l,0}(f_0)};
\end{equation}
and if in addition $\mathcal{ {E}}_{m;l,q}(f_{0})<+\infty $ for $0<q\ll 1$, then
\begin{equation}\label{exponential decay k}
\sum_{k=0}^m \norm{\na^k(f_+-f_-)(t)}_2
 \le
 C_{l,m}e^{-C_{l,m}\,t^{2/3}}\sqrt{ \mathcal{ {E}}_{m;l,q}(f_0)}.
\end{equation}

(2) If $ \norm{\Lambda^{-s}(f_{0,+}+f_{0,-}) }_2<+\infty$ for some $s\in [0,3/2)$, then
\begin{equation}\label{H-sbound}
\norm{\Lambda^{-s}(f_++f_-)(t)}_2\le C_0.
\end{equation}
Moreover, if $l\ge \max\{m+\frac{3}{4},\frac{5}{4}m+\frac{s-1}{4}\}$, then for $\ell=0,\dots,m-1$,
\begin{equation}\label{decay f1}
 \sum_{\ell\le k \le m}\norm{\nabla^k (f_++f_-)(t)}_2\le C_0(1+t)^{-\frac{\ell+s}{2}};
 \end{equation}
for any fixed small $\varepsilon>0$, if $l\ge \frac{m-1+s}{4\varepsilon} +m-\frac{9}{4}$, then for $\ell=0,\dots,m-2$,
\begin{equation}\label{decay f1 micro}
\norm{\nabla^\ell \{{\bf I-P_1 }\}(f_++f_-)(t)}_2\le C_0(1+t)^{-\frac{\ell+s+1-\varepsilon}{2}}.
 \end{equation}
\end{theorem}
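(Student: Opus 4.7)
The plan is to leverage the decomposition $f_1 = f_++f_-$, $f_2 = f_+-f_-$ of \eqref{f_1f_2 equation} together with the Strain--Guo time-weighted energy method: for part (1) I would extend the differential inequality \eqref{key inequality} to each higher level of spatial derivatives, and for part (2) I would bootstrap using a Guo--Wang type negative Sobolev interpolation applied to the $f_1$ equation, which behaves essentially like the Landau equation forced by the already fast-decaying $\nabla_x\phi$ and $f_2$ obtained in part (1).

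For \eqref{polynomial decay k} and \eqref{exponential decay k}: for each $\ell \in \{1,\dots,m\}$, I would derive a higher-order analogue of \eqref{key inequality},
\[
\frac{d}{dt}\mathcal{E}^\ell(f_2) + \lambda \sum_{0 \le k \le \ell+1}\norm{\nabla^k f_2}_\sigma^2 + \lambda \sum_{0\le k\le \ell}\norm{\nabla^k \nabla_x\phi}_2^2 \le 0,
\]
with $\mathcal{E}^\ell(f_2) \sim \sum_{0\le k \le \ell+1}\norm{\nabla^k f_2}_2^2 + \sum_{0\le k\le \ell}\norm{\nabla^k \nabla_x\phi}_2^2$. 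The derivation mirrors that of \eqref{key inequality}: by \eqref{positive L1L2} the operator $\mathcal{L}_2$ controls $\{\mathbf{I}-\mathbf{P_2}\}\nabla^k f_2$, while the Poisson coupling $4\nabla_x\phi\cdot v\sqrt{\mu}$ in the second equation of \eqref{f_1f_2 equation} produces the dissipation $\norm{\nabla^k\nabla_x\phi}_2^2$, which through the Poisson equation controls the macroscopic part $\mathbf{P_2}\nabla^k f_2$. All nonlinear couplings with $f_1$ and $\nabla_x\phi$ are absorbed using the global smallness from Theorem \ref{main theorem} together with Lemma \ref{nonlinear c}. Once this energy inequality is in hand, the Strain--Guo velocity-weight interpolation
\[
\mathcal{E}^\ell(f_2) \lesssim \bigl[\,\text{dissipation}\,\bigr]^{\theta}\bigl[\mathcal{E}_{m;l,0}(f_0)\bigr]^{1-\theta}
\]
for a suitable $\theta = \theta(l,\ell)\in(0,1)$ reduces matters to the ODE $\tfrac{d}{dt}\mathcal{E}^\ell(f_2)\lesssim -(\mathcal{E}^\ell(f_2))^{1/\theta}$, whose integration gives $\mathcal{E}^\ell(f_2)(t) \lesssim (1+t)^{-4(l-\ell)}$; taking square roots yields \eqref{polynomial decay k}. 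With the extra Gaussian weight $e^{q|v|^2/2}$ in $w(\alpha,\beta)$ the weight gap between energy and dissipation becomes only logarithmic, so the parallel scheme of Strain--Guo produces the sub-exponential rate $e^{-Ct^{2/3}}$ required in \eqref{exponential decay k}.

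For part (2): first, applying $\Lambda^{-s}$ to the $f_1$ equation in \eqref{f_1f_2 equation} and pairing with $\Lambda^{-s}f_1$, the quadratic term $\Gamma_\ast(f_1,f_1)$ is controlled via the Hardy--Littlewood--Sobolev estimate $\norm{\Lambda^{-s}(gh)}_2 \lesssim \norm{g}_{L^2_v L^p_x}\norm{h}_{L^2_v L^q_x}$ with $1/p+1/q=1/2+s/3$, while the coupling $\nabla_x\phi\cdot(\nabla_v f_2 - v f_2)$ is absorbed using the fast decay of $\nabla_x\phi$ and of $\{\nabla_v,v\}f_2$ from part (1); this proves \eqref{H-sbound}. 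For \eqref{decay f1} I would derive a higher-order energy inequality
\[
\frac{d}{dt}\mathcal{E}^\ell_1(f_1) + \lambda \sum_{\ell\le k\le m}\norm{\nabla^k f_1}_\sigma^2 \lesssim \bigl(\text{couplings decaying fast by part (1)}\bigr),
\]
then close it by simultaneously applying the Strain--Guo weighted interpolation and the negative Sobolev interpolation
\[
\norm{\nabla^\ell f_1}_2 \le \norm{\Lambda^{-s} f_1}_2^{1/(\ell+1+s)}\norm{\nabla^{\ell+1} f_1}_2^{(\ell+s)/(\ell+1+s)},
\]
producing the decay rate $(1+t)^{-(\ell+s)/2}$ by standard ODE comparison. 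For the microscopic refinement \eqref{decay f1 micro}, the direct coercivity \eqref{positive L1L2} dissipates $\norm{\{\mathbf{I}-\mathbf{P_1}\}\nabla^\ell f_1}_\sigma^2$ without the need of elliptic recovery, which after time-weighting gives one extra $(1+t)^{1/2}$ factor; the $\varepsilon$-loss reflects the unavoidable weight gap between the weighted energy and the $\sigma$-norm in the interpolation.

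The main obstacle will be the cross couplings $\nabla_x\phi\cdot(\nabla_v f_\mp - v f_\mp)$ appearing in both equations of \eqref{f_1f_2 equation}: at the $\nabla^\ell$ level the commutators produce worst-case terms where most derivatives fall on $f_1$ or $f_2$ while $\nabla_x\phi$ is taken in $L^\infty$, so time integrability requires the very fast polynomial or sub-exponential decay of $\nabla_x\phi$ from part (1). The threshold assumptions $l \ge m+\tfrac{3}{4}$ and $l \ge \max\{m+\tfrac{3}{4},\,\tfrac{5}{4}m+\tfrac{s-1}{4}\}$ are precisely what make the velocity-weight interpolation and the negative Sobolev interpolation simultaneously yield an integrable time bound on these couplings; juggling the two interpolations, particularly in the sub-exponential case, is the most delicate part of the argument.
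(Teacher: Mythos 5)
Your overall strategy matches the paper's: the $(f_1,f_2)$ decomposition, a hierarchy of differential inequalities $\frac{d}{dt}\mathcal{E}_0^\ell(f_2)+\lambda\bigl(\sum_{k\le \ell}\norm{\nabla^k f_2}_\sigma^2+\norm{\nabla_x\phi}_2^2\bigr)\le 0$ for part (1) closed via the Strain--Guo velocity-moment interpolation (for polynomial decay) and the velocity-time splitting (for the $e^{-ct^{2/3}}$ rate), and for part (2) a combination of negative Sobolev interpolation $\norm{\nabla^\ell \mathbf{P_1}f_1}_2\lesssim \norm{\Lambda^{-s}f_1}_2^{1/(\ell+1+s)}\norm{\nabla^{\ell+1}f_1}_2^{(\ell+s)/(\ell+1+s)}$ with energy estimates on $f_1$ with minimum derivative counts, and the extra $\sigma$-coercivity on $\{\mathbf{I-P_1}\}f_1$ for \eqref{decay f1 micro}. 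The derivative indices you write for the $f_2$ hierarchy (dissipation up to $\ell+1$ vs.\ the paper's up to $\ell$, $\sum_k\norm{\nabla^k\nabla_x\phi}_2^2$ vs.\ just $\norm{\nabla_x\phi}_2^2$) are harmless offsets, and the $\varepsilon$-loss in \eqref{decay f1 micro} actually arises from the velocity-time splitting in Lemma~\ref{lemma micro 44}, not an interpolation weight gap, but these are cosmetic differences.

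There is, however, a genuine gap in your treatment of \eqref{H-sbound} for $s\in(1/2,3/2)$. You assert that $\Gamma_\ast(f_1,f_1)$ is ``controlled'' by the HLS estimate $\norm{\Lambda^{-s}(gh)}_2\lesssim \norm{g}_{L^2_vL^p_x}\norm{h}_{L^2_vL^q_x}$ and that the $\phi$-$f_2$ coupling is time-integrable, and conclude ``this proves \eqref{H-sbound}.'' This works only for $s\le 1/2$: there, after splitting $f_1=\mathbf{P_1}f_1+\{\mathbf{I-P_1}\}f_1$ and using Sobolev embedding $H^2_x\hookrightarrow L^{3/s}_x$ (valid since $3/s\ge 6$), everything lands in $\mathcal{D}_{2;2,0}$, which is time-integrable by \eqref{energy inequality}. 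But for $s\in(1/2,3/2)$ the exponent $3/s<6$, and the hydrodynamic contribution of $\Gamma_\ast(f_1,f_1)$ forces an interpolation of the form $\norm{f_1}_{L^{3/s}_x}\lesssim \norm{f_1}_2^{s-1/2}\norm{\nabla_x f_1}_2^{3/2-s}$, producing the term $\norm{f_1}_2^{2s+1}\norm{\nabla f_1}_2^{3-2s}$ in the differential inequality (cf.\ Lemma~\ref{lemma H-s}, \eqref{H-s2}). This term is not in the dissipation and is not a priori time-integrable. The paper handles it by a bootstrap: it first proves \eqref{H-sbound} and \eqref{decay f1} for $s'=1/2$ (using $\dot H^{-s}\cap L^2\subset \dot H^{-1/2}$), which yields the weak decay $\norm{\nabla^k f_1(t)}_2^2\lesssim (1+t)^{-(k+1/2)}$, and only then integrates the problematic term as $\int_0^t (1+\tau)^{-(5/2-s)}\,d\tau<\infty$ (needing $s<3/2$). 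Without this intermediate step, your argument for $s\in(1/2,3/2)$ does not close. You should insert the $s=1/2$ stage and the ensuing decay of $f_1$ before attempting the $\Lambda^{-s}$ bound for the larger range of $s$.
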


 The followings are several remarks for our main theorems.

\begin{remark}{\it
We can consider the generalized Landau operator with, see \cite{H83,DL97,G02},
\begin{equation}\label{kernels}
\Phi (v)=\frac{1}{|v|^{\gamma+2}}\left( I-\frac{v\otimes v}{|v|^{2}}\right),\quad \gamma\ge -3.
\end{equation}
It is easy to conclude from our proof that the global unique solution $f$ to the Vlasov-Poisson-Landau system near Maxwellians exists for all $\gamma\ge -3$.
If $\gamma\ge -2$, then the decay rates of $f_+-f_-$ and $\phi$ in our theorems can be improved to be an exponential rate; and we can take $\varepsilon=0$ in \eqref{decay f1 micro} so that the decay rates of $ \{{\bf I-P_1 }\}(f_++f_-)$ are optimal. Very recently, Duan, Yang and Zhao \cite{DYZ121, DYZ122} established a global existence theory for the one-species Vlasov-Poisson-Boltzmann system in the whole space with hard potentials and soft potentials respectively, which generalized the pioneering work of Guo \cite{G022} with hard-sphere interaction. We believe that if the Vlasov-Poisson-Boltzmann system is of two-species, then our observation can be used to remove the $L^2_vL^1_x$ assumption of the initial data in \cite{DYZ121, DYZ122} and get the faster decay of electric potential.}
 \end{remark}

\begin{remark}{\it
The constraint $s<3/2$ in Theorem \ref{further decay} comes
from applying Lemma \ref{Riesz lemma} to estimate the nonlinear terms when doing
the negative Sobolev estimates via $\Lambda ^{-s}$. For $s\geq 3/2$, the
nonlinear estimates would not work.  Since Lemma \ref{Riesz lemma} implies that for $p\in (1,2]$, $L^p\subset \Dot{H}^{-s}$ with $s=3(\frac{1}{p}-\frac{1}{2})\in[0,3/2)$, so as a byproduct, we obtain the usual $L^p$--$L^2$ ($1< p\le 2$) type of the optimal decay rates for $f_++f_-$. Note also that the $L^2$ optimal decay rate of the higher-order spatial derivatives of the solution are obtained. Then the general optimal $L^q$ ($2\le q\le \infty$) decay rates of the solution follow by applying the optimal Sobolev interpolation. We also remark that we do not require the $\Dot{H}^{-s}$ or $L^p$ norm of initial data be small.  It is worth to pointing out that the optimal decay rates of $f_++f_-$ are new even for the Landau equation in the whole space with soft potentials \cite{G02,HY07}. We also believe that our method can be applied to show the optimal decay rates for the Boltzmann equation in the whole space with soft potentials both with and without angular cut-off \cite{C80,UA82,G03,HY07,SG06,SG08,GS11,S10,AMUXY11}.}
\end{remark}

Theorem \ref{further decay} will be proved in section \ref{decays}. To prove \eqref{polynomial decay k}--\eqref{exponential decay k}, we will establish a family of general versions of the differential inequality \eqref{key inequality}:
 \begin{equation}
  \frac{d}{dt}\mathcal{E}_0^\ell(f_2)+ \sum_{k=0}^\ell  \norm{\nabla^k f_2 }_\sigma^2+ \norm{ \nabla_x\phi}_2^2
\le 0, \text{ for }\ell=1,\dots,m,
\end{equation}
where $\mathcal{E}_0^\ell(f_2)(t)\sim \sum_{0\le k\le \ell} \norm{\nabla^k f_2}_2^2+\norm{\nabla_x\phi}_2^2 $. Then \eqref{polynomial decay k}--\eqref{exponential decay k} follow by applying the interpolation method (among velocity moments) developed in Strain and Guo \cite{SG06} and the splitting method (velocity-time) developed in Strain and Guo \cite{SG08}. To prove \eqref{decay f1}, the key is to establish the following a family of energy estimates with \textit{minimum} spatial derivative counts on $f_1$:
\begin{equation}
 \begin{split}
& \frac{d}{dt}\left\{\mathcal{E}_\ell^m ({f_1})+\mathcal{E}_0^m(f_2)\right\}
+\lambda  \left(\sum_{k=\ell+1}^m  \norm{\nabla^k f_1 }_\sigma^2+ \norm{\nabla^\ell \{{\bf I-P_1}\}f_1 }_\sigma^2 \right.\\&\qquad\qquad\qquad\qquad\qquad\qquad\ \left.+\sum_{k=0}^m  \norm{\nabla^k f_2 }_\sigma^2+ \norm{\nabla_x\phi}_2^2\right)\le 0, \text{ for }\ell=0,\dots,m-1,
 \end{split}
\end{equation}
where $\mathcal{E}_\ell^m(f_1)(t)\sim \sum_{\ell\le k\le m} \norm{\nabla^k f_1}_2^2$.  To achieve this, we will
extensively and carefully use the Sobolev interpolation of the
Gagliardo-Nirenberg inequality between high-order and low-order spatial
derivatives to control the nonlinear estimates. After deriving the negative Sobolev estimates \eqref{H-sbound}, \eqref{decay f1}--\eqref{decay f1 micro} follows by combing a Sobolev interpolation method among spatial regularity with the methods in \cite{SG06,SG08,S10}.

The rest of our paper is organized as follows. In section \ref{pre}, we establish an improved refined estimates for the nonlinear collision terms and collect some analytic tools. In section \ref{energy section}, we establish the nonlinear energy estimates for the local solutions. In section \ref{global}, we derive the basic time decay estimates and show that the local solution is global. In section \ref{decays}, we obtain the further decay rates for the global solution.

\section{Preliminary}\label{pre}

In this section, we  use $\widetilde{L}$ to uniformly denote the linear collision operators $L$, $\mathcal{L}_1$ and $\mathcal{L}_2$, and we use $\widetilde{\Gamma}$ to denote the nonlinear collision operators $\Gamma$ and $\Gamma_\ast$. We first recall the basic property of the linear collision operator $\widetilde{L}$.
\begin{lemma}\label{linear c}
We have $\Llangle \widetilde{L} g,h\Rrangle=\Llangle g,\widetilde{L}h\Rrangle ,\ \Llangle \widetilde{L} g,g\Rrangle\ge 0$, and $\widetilde{L} g=0 $ if and only if $g= \widetilde{P} g$, where $\widetilde{P}$ is the $L^2_v$ orthogonal projection onto the null space of $\widetilde{L}$, correspondingly. Moreover,
\begin{equation} \label{positive L all}
\Llangle \widetilde{L}g,g\Rrangle\gtrsim \norms{\{I- \widetilde{P}\}g}_\sigma^2.
\end{equation}
\end{lemma}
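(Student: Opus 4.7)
The plan is to treat $\widetilde{L}=L$ and $\widetilde{L}\in\{\mathcal{L}_1,\mathcal{L}_2\}$ separately, and to reduce the latter to the former via the linear change of variables $(f_+,f_-)\leftrightarrow(f_1,f_2):=(f_++f_-,f_+-f_-)$.

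For the two-species operator $L$, all four statements are classical facts about the linearized Landau operator. Self-adjointness and non-negativity follow by writing $\langle L g,h\rangle$ with the definitions \eqref{L}, \eqref{collision} and integrating by parts in $v$ and $v'$, exploiting the symmetry $\Phi(v-v')=\Phi(v'-v)$ and the elementary kernel identity $\Phi(v-v')(z-z')=0$ whenever $z-z'$ is parallel to $v-v'$. The null space is $N(L)$ as displayed in \eqref{null L}, and the coercivity estimate is precisely \eqref{positive L}. At this stage I would simply cite Guo \cite{G02} and note that the paper has already stated \eqref{positive L}.

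For $\mathcal{L}_1$ and $\mathcal{L}_2$, a direct computation from the definition \eqref{L} yields
\begin{equation*}
L_+f+L_-f=-2(A_\ast+K_\ast)(f_++f_-)=\mathcal{L}_1 f_1,\qquad L_+f-L_-f=-2A_\ast(f_+-f_-)=\mathcal{L}_2 f_2,
\end{equation*}
and therefore, using $\langle L f,f\rangle=\langle L_+f,f_+\rangle+\langle L_-f,f_-\rangle$,
\begin{equation*}
\langle L f,f\rangle=\tfrac12\langle \mathcal{L}_1 f_1,f_1\rangle+\tfrac12\langle \mathcal{L}_2 f_2,f_2\rangle.
\end{equation*}
By specializing to $f_+=f_-$ (so that $f_2=0$) and to $f_+=-f_-$ (so that $f_1=0$) one isolates each of the two quadratic forms and transfers non-negativity and self-adjointness from $L$ to $\mathcal{L}_1$ and $\mathcal{L}_2$ individually. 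The null space characterization is obtained by decomposing the basis in \eqref{null L} in the $(f_1,f_2)$ coordinates: the elements $v\sqrt{\mu}\binom{1}{1}$ and $|v|^2\sqrt{\mu}\binom{1}{1}$ contribute only to $f_1$, while the symmetric/antisymmetric combinations of $\sqrt{\mu}\binom{1}{0}$ and $\sqrt{\mu}\binom{0}{1}$ give $\sqrt{\mu}$ in the $f_1$-slot and $\sqrt{\mu}$ in the $f_2$-slot, which exactly reproduces the two null spaces stated in \eqref{null L}. Accordingly the projection $\mathbf{P}$ decomposes as $(\mathbf{P}_1,\mathbf{P}_2)$ in the new coordinates, and the uniform coercivity \eqref{positive L all} for $\mathcal{L}_1,\mathcal{L}_2$ is then just \eqref{positive L1L2}.

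There is no real obstacle here, since the lemma is a compendium of facts that either are standard or have been recorded earlier in the excerpt. The only point requiring care is the bookkeeping in the identity above decoupling $L$ into $(\mathcal{L}_1,\mathcal{L}_2)$, and the verification that the $\sigma$-norm of $\{\mathbf{I}-\mathbf{P}\}f$ splits into the sum of the $\sigma$-norms of $\{\mathbf{I}-\mathbf{P}_i\}f_i$ under the change of variables, so that \eqref{positive L1L2} is indeed a consequence of \eqref{positive L} (and conversely).
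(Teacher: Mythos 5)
Your proposal is correct, but it takes a genuinely different route from the paper's proof. The paper observes that the claims for $L$ and $\mathcal{L}_1$ are already standard (citing Guo \cite{G02,G12}), so it "only need[s] to consider the case $\widetilde{L}=\mathcal{L}_2$," and it treats $\mathcal{L}_2$ directly: it invokes the divergence-form re-expression $\mathcal{L}_2 g=-2\mu^{-1/2}\nabla_v\cdot\{\mu\sigma\nabla_v[\mu^{-1/2}g]\}$ (from Lemma 1 of \cite{G02}), integrates by parts to obtain the symmetric non-negative form $\langle\mu\sigma\nabla_v[\mu^{-1/2}g],\nabla_v[\mu^{-1/2}h]\rangle$, reads off self-adjointness, non-negativity, and the null-space characterization from positive-definiteness of $\sigma$, and cites Lemma 5 of \cite{G02} for coercivity. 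You instead take $L$ as given and transfer everything to $\mathcal{L}_1,\mathcal{L}_2$ through the exact algebraic decoupling $\langle Lf,f\rangle=\tfrac12\langle\mathcal{L}_1 f_1,f_1\rangle+\tfrac12\langle\mathcal{L}_2 f_2,f_2\rangle$ under the change of variables $(f_+,f_-)\leftrightarrow(f_1,f_2)$. Both work. The paper's approach is more self-contained for $\mathcal{L}_2$ specifically (it does not need the full null space of $L$ or the symmetric/antisymmetric decomposition) and produces the explicit Dirichlet-form representation that is later useful; your approach is more structural, and it makes manifest that \eqref{positive L1L2} is equivalent to \eqref{positive L}, which is a nice conceptual dividend given how central the $(f_1,f_2)$ decoupling is in this paper. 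The one point you flag but do not carry out — that $\norms{\{\mathbf{I}-\mathbf{P}\}f}_\sigma^2 = \tfrac12\norms{\{\mathbf{I}-\mathbf{P}_1\}f_1}_\sigma^2+\tfrac12\norms{\{\mathbf{I}-\mathbf{P}_2\}f_2}_\sigma^2$ — does hold, and follows from $\{\mathbf{I}-\mathbf{P}\}f=\tfrac12\binom{\{\mathbf{I}-\mathbf{P}_1\}f_1+\{\mathbf{I}-\mathbf{P}_2\}f_2}{\{\mathbf{I}-\mathbf{P}_1\}f_1-\{\mathbf{I}-\mathbf{P}_2\}f_2}$ together with the parallelogram law for the $\sigma$-norm (which is a quadratic form); you should spell this out since it is the only step in your argument that is not immediate bookkeeping.
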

\begin{proof}
We only need to consider the case $\widetilde{L}=\mathcal{L}_2$. We use the following reexpression for $\mathcal{L}_2$ as Lemma 1 in \cite{G02}:
\begin{equation}
\mathcal{L}_2 g=-2\mu^{-1/2}\nabla_v\cdot\left\{\mu^{1/2}\sigma\left[\nabla_v g+ v g\right]\right\}
=-2\mu^{-1/2}\nabla_v\cdot\left\{\mu \sigma\nabla_v\left[\mu^{-1/2}g\right]\right\}.
\end{equation}
Then we integrate by parts to obtain
\begin{equation}
\Llangle \mathcal{L}_2 g,h \Rrangle=
\Llangle-\mu^{-1/2}\nabla_v\cdot\left\{\mu \sigma\nabla_v\left[\mu^{-1/2}g\right]\right\}, h \Rrangle
=\Llangle \mu \sigma\nabla_v\left[\mu^{-1/2}g\right] , \nabla_v\left[\mu^{-1/2}h \right]\Rrangle.
\end{equation}
Hence, $\Llangle \mathcal{L}_2 g,h\Rrangle=\Llangle
g,\mathcal{L}_2h\Rrangle ,\ \Llangle \mathcal{L}_2 g,g\Rrangle\ge 0$, and $\mathcal{L}_2 g=0 $ if and only if $g={\bf P_2} g$ since $\sigma$ is positively definite. The estimate \eqref{positive L all} for $\mathcal{L}_2$ follows as Lemma 5 in \cite{G02}.
\end{proof}

Next we recall the weighted estimates for $\widetilde{L}$.
\begin{lemma}\label{linear c weight}
Let $w=w(\al,\beta)$ in \eqref{weight}. For any small $\eta>0$, there exists $C_\eta>0$ such that
\begin{equation}
\begin{split}
(1-q^2-\eta)\norms{\partial^\alpha g}_{\sigma,w(\alpha,0)}^2
-C_{l,m,\eta} \norms{\partial^\alpha g}_{\sigma}^2
&\le\Llangle w^2(\alpha,0)\partial^\alpha  \widetilde{L}
g ,\partial^\alpha g\Rrangle
\\&  \le \norms{\partial^\alpha g}_{\sigma,w(\alpha,0)}^2
+C_{l,m,\eta} \norms{\partial^\alpha g}_{\sigma}^2,
\end{split}
 \end{equation}
and for $\beta\neq 0$
\begin{equation}
\begin{split}
 &\norms{\partial_\beta^\alpha g}_{\sigma,w(\alpha,\beta)}^2-\eta\sum_{|\beta_1|=|\beta|}\norms{\partial_{\beta_1}^\alpha g}_{\sigma,w(\alpha,\beta)}^2
-C_{l,m,\eta}\sum_{|\beta_1|<|\beta|}\norms{\partial_{\beta_1}^\alpha g}_{\sigma,w(\alpha,\beta_1)}^2
\\&\quad\le \Llangle w^2(\alpha,\beta)\partial_\beta^\alpha  \widetilde{L}
g ,\partial_\beta^\alpha g\Rrangle
\\&\quad\le \norms{\partial_\beta^\alpha g}_{\sigma,w(\alpha,\beta)}^2+\eta\sum_{|\beta_1|=|\beta|}\norms{\partial_{\beta_1}^\alpha g}_{\sigma,w(\alpha,\beta)}^2
+C_{l,m,\eta}\sum_{|\beta_1|<|\beta|}\norms{\partial_{\beta_1}^\alpha g}_{\sigma,w(\alpha,\beta_1)}^2.
\end{split}
 \end{equation}
\end{lemma}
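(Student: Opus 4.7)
As in Lemma \ref{linear c}, the estimates for $L$ and $\mathcal{L}_1$ follow from that for $\mathcal{L}_2$ together with its standard counterpart for $-2A_\ast$ (see \cite{G02}), so the plan is to focus on $\widetilde{L}=\mathcal{L}_2$ and use the self--adjoint divergence form
\begin{equation*}
\mathcal{L}_2 g=-2\mu^{-1/2}\nabla_v\cdot\bigl\{\mu\,\sigma\,\nabla_v[\mu^{-1/2}g]\bigr\}.
\end{equation*}
Writing $W(v)=w^2(\alpha,\beta)(v)$, this identity and integration by parts in $v$ give
\begin{equation*}
\Llangle W\partial_\beta^\alpha \mathcal{L}_2 g,\partial_\beta^\alpha g\Rrangle
=2\Llangle \mu\,\sigma\,\nabla_v[\mu^{-1/2}\partial_\beta^\alpha g],\,\nabla_v[\mu^{-1/2}W\,\partial_\beta^\alpha g]\Rrangle
+(\text{commutator terms}),
\end{equation*}
where the commutators vanish when $\beta=0$ and otherwise involve $\partial_{\beta'}^{\alpha}g$ with $|\beta'|<|\beta|$ arising from $[\partial_\beta,\sigma^{ij}]$ and $[\partial_\beta,v]$.

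For the case $\beta=0$ I would expand the right--hand side using $\nabla_v[\mu^{-1/2}W\partial^\alpha g]=W\nabla_v[\mu^{-1/2}\partial^\alpha g]+\mu^{-1/2}\partial^\alpha g\,\nabla_v W$. The first piece yields the sharp coercive quantity $\norms{\partial^\alpha g}_{\sigma,w(\alpha,0)}^2$ by the equivalence \eqref{sigma norm} (applied with the weight $w(\alpha,0)$). The second piece produces cross terms with $\nabla_v W/W=2qv+2(l-|\alpha|)\,v/\langle v\rangle^2$. The purely polynomial contribution $\langle v\rangle^{-2}v$ is of lower order in $v$ and, after a Cauchy--Schwarz with a small $\eta$, is absorbed into $\eta\norms{\partial^\alpha g}_{\sigma,w(\alpha,0)}^2$ plus $C_{l,\eta}\norms{\partial^\alpha g}_\sigma^2$. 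The exponential part $2qv$, however, pairs with the $\sigma^{ij}v_iv_j$ piece of the $\sigma$--inner product to contribute a term of size $q^2\norms{\partial^\alpha g}_{\sigma,w(\alpha,0)}^2$ with the \emph{same} scaling as the leading term; this is exactly the origin of the factor $(1-q^2-\eta)$ in the lower bound. The upper bound is easier, as one does not need to fight for coercivity and Cauchy--Schwarz directly yields the stated majorant.

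For $\beta\neq 0$, I would first write $\partial_\beta^\alpha \mathcal{L}_2 g=\mathcal{L}_2(\partial_\beta^\alpha g)+\mathcal{R}_{\alpha,\beta}g$, where the remainder $\mathcal{R}_{\alpha,\beta}g$ collects derivatives of $\sigma^{ij}$ and of $v$ redistributed by $\partial_\beta$; by standard Landau--kernel estimates (Lemma 4 of \cite{G02}) each term in $\mathcal{R}_{\alpha,\beta}g$ is a linear combination of $\partial_{\beta'}^{\alpha}g$ with $|\beta'|\le|\beta|$, with strictly lower order in $v$ when $|\beta'|<|\beta|$ and admissible when $|\beta'|=|\beta|$. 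Applying the $\beta=0$ argument with $g$ replaced by $\partial_\beta^\alpha g$ gives the leading $\norms{\partial_\beta^\alpha g}_{\sigma,w(\alpha,\beta)}^2$, and Cauchy--Schwarz on $\Llangle W\mathcal{R}_{\alpha,\beta}g,\partial_\beta^\alpha g\Rrangle$ absorbs the $|\beta'|=|\beta|$ contributions into the $\eta$--term $\sum_{|\beta_1|=|\beta|}\norms{\partial_{\beta_1}^\alpha g}_{\sigma,w(\alpha,\beta)}^2$, while the $|\beta'|<|\beta|$ contributions, because their polynomial weight is $w(\alpha,\beta)=\langle v\rangle^{-2}w(\alpha,\beta_1)$ relative to the allowed weight at level $|\beta_1|$, are dominated by $C_{l,m,\eta}\sum_{|\beta_1|<|\beta|}\norms{\partial_{\beta_1}^\alpha g}_{\sigma,w(\alpha,\beta_1)}^2$ after an $\eta$--Young inequality.

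The most delicate point I expect is the bookkeeping of the exponential weight contribution: one has to show that despite the $v$--growth of $2qv$ introduced by $\nabla_v W$, the resulting quadratic form has the \emph{same} structure $\sigma^{ij}v_iv_j$ as part of $\norms{\cdot}_{\sigma,w(\alpha,\beta)}^2$, and that the cross term is controlled with coefficient exactly $q^2$ modulo an arbitrarily small $\eta$. This is what ultimately forces the smallness assumption $0\le q\ll 1$ in \eqref{weight} and yields the coercivity constant $(1-q^2-\eta)$. Once this is accomplished, everything else reduces to routine Cauchy--Schwarz and the already--established equivalence \eqref{sigma norm}.
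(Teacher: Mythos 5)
The paper itself does not give a self-contained argument for this lemma: its entire proof is a citation to Lemmas~8 and~9 of Strain--Guo \cite{SG08}, with a one-line remark that the upper bound follows by the same computation. Your proposal is an attempt to reconstruct that computation from scratch, and the skeleton you describe --- rewrite $\mathcal{L}_2$ in divergence form, integrate by parts against the weighted test function, split off the terms where $\nabla_v$ falls on the weight, and absorb the $v$-velocity-derivative commutators when $\beta\neq 0$ --- is indeed the correct overall strategy, so in that sense you are reproducing the cited argument rather than departing from it.

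That said, as written this is a plan rather than a proof, and the one place where the lemma is genuinely delicate is exactly the place you leave vaguest. You assert that the $2qv$ part of $\nabla_v W/W$ ``pairs with the $\sigma^{ij}v_iv_j$ piece of the $\sigma$-inner product to contribute a term of size $q^2\norms{\partial^\alpha g}_{\sigma,w(\alpha,0)}^2$,'' but you never actually extract this term. If you carry out the computation honestly, the weighted bilinear form equals (up to the overall factor $2$ in $\mathcal{L}_2=-2A_\ast$)
\begin{equation*}
\int W\,\sigma^{ij}(\partial_i h+v_i h)(\partial_j h+v_j h)\,dv
\;+\;\int \sigma^{ij}(\partial_i h+v_i h)\,h\,\partial_j W\,dv,
\end{equation*}
and the first integral already contains the cross term $2\int W\sigma^{ij}v_i h\,\partial_j h\,dv$, which is \emph{not} part of $\norms{h}_{\sigma,w}^2$. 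To recover $\norms{h}_{\sigma,w}^2$ you must integrate this cross term by parts once more, producing another $\nabla_v W$ factor; only after that second integration by parts do you see $q$ enter quadratically, and only then can the $(1-q^2-\eta)$ coefficient be tracked. Cauchy--Schwarz on the cross term, as you propose, would only give a coefficient of the form $1-O(q)$, which is weaker than what is claimed. So your mechanism for where $q^2$ comes from is not the one that actually works. There is also a small but symptomatic arithmetic slip: since $W=w^2(\alpha,0)=e^{q|v|^2}\langle v\rangle^{4(l-|\alpha|)}$, one has $\nabla_v W/W=2qv+4(l-|\alpha|)\,v/\langle v\rangle^2$, not $2qv+2(l-|\alpha|)\,v/\langle v\rangle^2$; this suggests the computation was sketched, not done, and the same caution applies to your treatment of the $|\beta'|=|\beta|$ terms, where it needs to be verified (not just asserted) that the coefficient in front of $\sum_{|\beta_1|=|\beta|}\norms{\partial_{\beta_1}^\alpha g}_{\sigma,w}^2$ can be made an arbitrary $\eta$.

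In summary: the approach is the one behind the cited reference and you have identified the right structural ingredients, but the step that makes the lemma nontrivial --- the exact bookkeeping that converts the exponential-weight contribution into the sharp $(1-q^2-\eta)$ coefficient --- is not actually carried out, and the heuristic you give for it is not correct as stated. The proposal would need that computation written out (essentially Lemmas~8 and~9 of \cite{SG08}) before it could be accepted as a proof.
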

\begin{proof}
We refer to Lemma 8 and Lemma 9 in \cite{SG08} for the proof. Indeed, the upper bounds were not written down there, but the proof is the same.
\end{proof}

We now prove the refined estimates for the nonlinear collision operator $\widetilde{\Gamma}$.
\begin{lemma}\label{nonlinear c}
Let $w=w(\al,\beta)$ in \eqref{weight}. Then we have
\begin{equation}\label{ga es}
\begin{split}
&\Llangle w^2\partial _\beta^\alpha \widetilde{\Gamma} [g_1,g_2],\;\partial
_\beta ^\alpha g_3\Rrangle
\\&\quad\le
\sum_{\alpha_1 \le \alpha\atop \bar{\beta}\le\beta_1 \le \beta}C_{\al}^{\al_1}C_{\beta}^{\beta_1}C_{(\beta_1,\bar{\beta})}
\norms{\mu^\delta\partial _{\bar{\beta}}^{\alpha _1}g_1}_{2}
\norms{\partial _{\beta -\beta _1}^{\alpha -\alpha _1}g_2}_{\sigma ,w}
\left(\norms{\partial _\beta^\alpha g_3}_{\sigma ,w}+
l
\norms{\partial _\beta^\alpha g_3}_{2 ,\frac{w}{\langle v\rangle^{\frac {3}{ 2}}}}\right).
\end{split}
\end{equation}
Hereafter $\delta>0$ is a sufficiently small universal number and $C_{(\beta_1,\bar{\beta})}$ denotes  constants depending on $\beta_1,\bar{\beta}$ but universal when $\bar{\beta}=\beta_1$. In particular, we have
\begin{equation}\label{ga es 0}
\Llangle\widetilde{\Gamma} [g_1,g_2],g_3\Rrangle
\lesssim \norms{\mu^\delta g_1}_{2}\norms{g_2}_{\sigma}\norms{g_3}_{\sigma}.
\end{equation}
\end{lemma}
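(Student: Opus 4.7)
The plan is to reduce immediately to estimating $\Llangle w^2\partial_\beta^\alpha\Gamma_*[g_1,g_2],\partial_\beta^\alpha g_3\Rrangle$, since both $\Gamma$ and $\Gamma_*$ are built from the same scalar nonlinear operator. I would start from the conservation-form reexpression of $\Gamma_*$ (as in Lemma 1 of \cite{G02}), writing $\Gamma_*(g_1,g_2)=\partial_{v_i}\mathbf{T}^i-\mathbf{R}$, where $\mathbf{T}^i$ and $\mathbf{R}$ are built linearly from the convolutions $\Phi^{ij}*[\sqrt{\mu}(\cdot)]$ acting on $g_1$ or $\partial_{v_j}g_1$, multiplied by $g_2$ or $\partial_{v_j}g_2$. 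Applying the Leibniz rule to $\partial_\beta^\alpha$ and then integrating by parts in $v_i$ to move the outer divergence onto $w^2\partial_\beta^\alpha g_3$, the result is a finite sum of terms of the generic form
\[
\int\{\Phi^{ij}*[\sqrt{\mu}\,\partial_{\bar\beta}^{\alpha_1}g_1]\}\,\partial_{v_j}\partial_{\beta-\beta_1}^{\alpha-\alpha_1}g_2\,\partial_{v_i}\bigl(w^2\partial_\beta^\alpha g_3\bigr)\,dv
\]
(together with lower-order variants). Additional integrations by parts may shift $v$-derivatives off $g_1$ onto $\Phi^{ij}$, $g_2$, or $g_3$, which is exactly what produces the range $\bar\beta\le\beta_1\le\beta$ rather than the naive $\beta_1\le\beta$.

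For each such term I would apply the pointwise Landau-kernel bound (cf.\ Corollary 1 in \cite{G02}), namely $|\Phi^{ij}*[\sqrt{\mu}\,h](v)|\lesssim\norms{\mu^\delta h}_{2}\,\Psi^{ij}(v)$ with $\Psi^{ij}$ having the same principal behavior as $\sigma^{ij}$. This factors out $\norms{\mu^\delta\partial_{\bar\beta}^{\alpha_1}g_1}_{2}$. Invoking the equivalence \eqref{sigma norm =}, the remaining pairing
\[
\int\Psi^{ij}\,\partial_{v_j}\partial_{\beta-\beta_1}^{\alpha-\alpha_1}g_2\cdot w^2\partial_{v_i}\partial_\beta^\alpha g_3\,dv
\]
is handled by an asymmetric Cauchy–Schwarz that produces the product $\norms{\partial_{\beta-\beta_1}^{\alpha-\alpha_1}g_2}_{\sigma,w}\,\norms{\partial_\beta^\alpha g_3}_{\sigma,w}$, which is the first piece of the stated bound. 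The scalar estimate \eqref{ga es 0} then follows as the special case $\alpha=\beta=0$, $w\equiv 1$, recovering the classical bound of \cite{G02}.

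The main obstacle is isolating the extra factor $l\,\norms{\partial_\beta^\alpha g_3}_{2,w/\langle v\rangle^{3/2}}$ with \emph{linear} (rather than quadratic) dependence on $l$. This term enters when the integration-by-parts derivative $\partial_{v_i}$ lands on the weight:
\[
\partial_{v_i}w^2=\Bigl(2qv_i+\frac{4(l-|\alpha|-|\beta|)v_i}{\langle v\rangle^2}\Bigr)w^2.
\]
The $qv_i$ piece is absorbed by the sigma-norm since $q\ll 1$, but the $(l-|\alpha|-|\beta|)v_i/\langle v\rangle^2$ piece carries the dangerous $l$-growth. The point is that in the resulting term only $\partial_{v_j}g_2$ gives a genuine sigma-contribution, whereas the factor pairing with $g_3$ involves $g_3$ itself (no $v$-derivative), multiplied by $|v|\langle v\rangle^{-2}\langle v\rangle^{-1/2}w = \langle v\rangle^{-3/2}w$ after accounting for the kernel weight. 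Cauchy-Schwarzing asymmetrically therefore yields one sigma-norm for $g_2$ and the pure weighted $L^2$-norm $\norms{\partial_\beta^\alpha g_3}_{2,w/\langle v\rangle^{3/2}}$ for $g_3$, with coefficient linear in $l$. A symmetric split would have produced $l^2$ and destroyed the utility of the bound for propagating high velocity moments, so this asymmetric bookkeeping — together with the careful separation of weight-derivative contributions from kernel-derivative contributions — is the heart of the refinement over the standard estimate.
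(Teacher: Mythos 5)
Your overall plan matches the paper's structure closely: Leibniz expansion of $\partial_\beta^\alpha$, integration by parts in $v$, the realization that the dangerous $l$-growth enters only when $\partial_{v_i}$ hits $w^2$ via $\partial_{v_i}w^2 = \bigl(2qv_i + 4(l-|\alpha|-|\beta|)v_i/\langle v\rangle^2\bigr)w^2$, and an asymmetric Cauchy--Schwarz that trades the $g_3$-sigma-norm for $\norms{\partial_\beta^\alpha g_3}_{2,w/\langle v\rangle^{3/2}}$ precisely in that weight-derivative term so the constant stays linear in $l$. That bookkeeping is indeed the heart of the refinement, and your identification of it is correct, as is the reduction to $\Gamma_*$ and the derivation of \eqref{ga es 0} from \eqref{ga es} by setting $\alpha=\beta=0$, $l=q=0$.

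However, there is a genuine gap in the middle step. After the Leibniz expansion there are terms of the form $\Llangle w^2\{\Phi^{ij}*\partial_{\beta_1}[\mu^{1/2}\partial_j\partial^{\alpha_1}g_1]\}\partial_{\beta-\beta_1}^{\alpha-\alpha_1}g_2,\partial_i\partial_\beta^\alpha g_3\Rrangle$ in which a $v$-derivative $\partial_j$ sits on $g_1$. Since the asserted bound carries $\norms{\mu^\delta\partial_{\bar\beta}^{\alpha_1}g_1}_2$ with no extra derivative, this $\partial_j$ must be moved off $g_1$; using the split in \eqref{parts} one piece lands on $\mu^{1/2}$ (harmless) and one piece lands on the kernel, producing $\partial_j\Phi^{ij}(v)=O(|v|^{-2})\notin L^2_{loc}(\R^3)$. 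Your proposed ``pointwise Landau-kernel bound'' (Cauchy--Schwarz inside the convolution against $\mu^{1/4}$) fails here: $\int_{|z|<1}|z|^{-4}\mu^{1/4}(v-z)\,dz=\infty$, so one cannot factor out $\norms{\mu^\delta\partial_{\bar\beta}^{\alpha_1}g_1}_2$ by that route. The paper handles these singular contributions (from \eqref{gamma3}, \eqref{gamma4}, \eqref{part2}) by cutting the convolution into the regions $|v-v_*|\ge1$ and $|v-v_*|\le1$; the far part is treated as before, and the near part is estimated by H\"older in $v$, then Hardy--Littlewood--Sobolev ($\norms{|\cdot|^{-2}*h}_{L^6}\lesssim\norms{h}_{L^2}$) combined with the Sobolev embedding $H^1_v\hookrightarrow L^3_v$, following Proposition 3.5 of \cite{GS11}. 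Without this (or an equivalent device) the argument does not close. A minor misattribution in passing: the range $\bar\beta\le\beta_1\le\beta$ does not come from ``additional integrations by parts'' but simply from the Leibniz rule applied to $\partial_{\beta_1}\bigl[v_i\mu^{1/2}\,\partial^{\alpha_1}g_1\bigr]$ inside the convolution, where $\bar\beta$ records how many $v$-derivatives actually land on $g_1$ and the rest fall on the smooth factor $v_i\mu^{1/2}$.
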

Importantly, when $\al=\al_1,\beta=\bar{\beta}$ or $\al_1=\beta_1= 0$ the constant in front of the first term of \eqref{ga es} does not depend on either $l$ or $m$; otherwise, it depends on $m$! Note that there is a large factor $l$ in the second term of \eqref{ga es}.
Our key improvement is that we are free to bound the term $\norms{\mu^\delta f}_{2}$ by either $\norms{f}_2$  or $\norms{f}_\sigma$  and  the term $\norms{f}_{2 ,\frac{w}{\langle v\rangle^{\frac {3}{ 2}}}}$ by either $\norms{f}_{2,w}$  or $\norms{f}_{\sigma,w}$ as we want.
\begin{proof}
We prove the lemma for $\widetilde{\Gamma}=\Gamma_\ast$. As Proposition 7 in \cite{G12}, by the product rule we expand
\[
\langle w^2\partial _\beta ^\alpha \Gamma_\ast[g_{1}, g_2],\partial
_\beta ^\alpha g_3\rangle =\sum C_\alpha ^{\alpha _1}C_\beta ^{\beta
_1}\times G_{\alpha _1\beta _1},
\]
where $G_{\alpha _1\beta _1}$ takes the form
\begin{eqnarray}
&&-\Llangle w^2\left\{\Phi ^{ij}*\partial _{\beta _1}\left[\mu ^{1/2}\partial
^{\alpha _1}g_1\right]\right\}\partial _j\partial _{\beta -\beta _1}^{\alpha -\alpha
_1}g_2,\partial _i\partial _\beta ^\alpha g_3\Rrangle  \label{gamma1} \\
&&-(1+2q)\Llangle w^2\left\{\Phi ^{ij}*\partial _{\beta _1}\left[v_i\mu
^{1/2}\partial ^{\alpha _1}g_1\right]\right\}\partial _j\partial _{\beta -\beta
_1}^{\alpha -\alpha _1}g_2,\partial _\beta ^\alpha g_3\Rrangle  \label{gamma2}
\\&&+(1+2q)\Llangle w^2\left\{\Phi ^{ij}*\partial _{\beta _1}\left[v_i\mu
^{1/2}\partial _j\partial ^{\alpha _1}g_1\right]\right\}\partial _{\beta -\beta
_1}^{\alpha -\alpha _1}g_2,\partial _\beta ^\alpha g_3\Rrangle  \label{gamma3}\\
&&+\Llangle w^2\left\{\Phi ^{ij}*\partial _{\beta _1}\left[\mu
^{1/2}\partial _j\partial ^{\alpha _1}g_1\right]\right\}\partial _{\beta -\beta
_1}^{\alpha -\alpha _1}g_2,\partial _i\partial _\beta ^\alpha g_3\Rrangle
\label{gamma4}
\\
&&- 4(l-|\al|-|\beta|)\Llangle   \frac{w^2}{\langle v\rangle^{2}}\left\{\Phi ^{ij}*\partial _{\beta _1}\left[v_i\mu
^{1/2}\partial ^{\alpha _1}g_1\right]\right\}\partial _j\partial _{\beta -\beta
_1}^{\alpha -\alpha _1}g_2,\partial _\beta ^\alpha g_3\Rrangle  \label{part1}
\\
&&+4(l-|\al|-|\beta|) \Llangle   \frac{w^2}{\langle v\rangle^{2}}\left\{\Phi ^{ij}*\partial _{\beta _1}\left[v_i\mu
^{1/2}\partial _j\partial ^{\alpha _1}g_1\right]\right\}\partial _{\beta -\beta
_1}^{\alpha -\alpha _1}g_2,\partial _\beta ^\alpha g_3\Rrangle
\label{part2}
\end{eqnarray}
with double summations over $1\le i,j\le 3$ and $\pa_i=\pa_{v_i}$.

Note that there is a (large) factor $4(l-|\al|-|\beta|)$ in both \eqref{part1} and \eqref{part2}. The first two terms \eqref{gamma1}--\eqref{gamma2} would not encounter this factor, so they can be bounded in the same way as in Proposition 2.2 of \cite{SZ12} via the first term on the right-hand side of \eqref{ga es}. We shall now first estimate the term \eqref{part1}. Since $\Phi ^{ij}(v)=O(|v|^{-1})\in L_{loc}^2({\R}^3)$ and
$|\partial _{\beta _1-\bar{\beta}}\{v_i\mu ^{1/2}\}| \le C_{(\beta_1,\bar{\beta})}
\mu ^{1/4}$, by Lemma 2 in \cite{G02}, the Cauchy-Schwartz inequality implies
\begin{equation}\label{convolution}
\begin{split}
 \Phi ^{ij}*\partial _{\beta _1}\left[v_i\mu
^{1/2}\partial ^{\alpha _1}g_1\right]
&\le C_{ \beta_1}^{\bar{\beta}}
 \sum_{\bar{\beta}\le \beta _1}\left|\Phi ^{ij}*\left\{\partial _{\beta _1-\bar{\beta}%
}\left[v_i\mu ^{1/2}\right]\partial _{\bar{\beta}}^{\alpha _1}g_1\right\}\right|
\\
 &\le C_{(\beta_1,\bar{\beta})}
\left\{|\phi ^{ij}|^2*\mu ^{1/4}\right\}^{1/2}(v)\left\{ \sum_{\bar{\beta}\le
\beta _1}\int \mu ^{1/4}(v_*)
\left|\partial _{\bar{\beta}}^{\alpha _1}g_1(v_*)\right|^{2} dv_*\right\} ^{1/2}    \\
 &\le C_{(\beta_1,\bar{\beta})}
\langle v\rangle^{-1}\sum_{\bar{\beta}\le \beta _1}\left|\mu^\delta
\partial _{\bar{\beta}}^{\alpha _1}g_1\right| _{2}
  \end{split}
\end{equation}
for a sufficiently small $\delta>0$. Hence, recalling \eqref{sigma norm =}, we bound \eqref{part1} by
\begin{equation}\label{bound0}
\begin{split}
 & l C_{(\beta_1,\bar{\beta})}\sum_{\bar{\beta}\le \beta _1}
\left| \mu^\delta\partial _{\bar{\beta}}^{\alpha _1}g_1\right|_{2}
\int \norms{w^2\langle v\rangle^{-3} \partial
_j\partial _{\beta -\beta _1}^{\alpha -\alpha _1}g_2\partial _\beta ^\alpha
g_3}dv \\
&\quad\le  l C_{(\beta_1,\bar{\beta})}
\sum_{\bar{\beta}\le \beta _1}
\left|\mu^\delta\partial _{\bar{\beta}}^{\alpha _1}g_1\right|_{2}
\left| w \langle v\rangle^{-\frac {3}{ 2}}\partial_j\partial _{\beta -\beta _1}^{\alpha -\alpha _1}g_2\right| _2\left|
w \langle v\rangle^{-\frac {3}{ 2}}\partial _\beta ^\alpha g_3\right| _2 \\
&\quad\le  l C_{(\beta_1,\bar{\beta})}
\sum_{\bar{\beta}\le \beta _1}\norms{\mu^\delta\partial _{\bar{\beta}}^{\alpha
_1}g_1}_{2 }
\norms{\partial _{\beta -\beta _1}^{\alpha -\alpha_1}g_2}_{\sigma ,w}
\norms{\partial _\beta ^\alpha g_3}_{2 ,\frac{w}{\langle v\rangle^{\frac {3}{ 2}}}}.
  \end{split}
\end{equation}

For the rest three terms, we  use an integration by parts inside the convolution (in $v_\ast$) to split
\begin{equation}\label{parts}
\Phi ^{ij}*\partial _{\beta _1}\left[\mu ^{1/2}\partial _j\partial ^{\alpha
_1}g_1\right]=\partial _j\Phi ^{ij}*\partial _{\beta _1}\left[\mu ^{1/2}\partial
^{\alpha _1}g_1\right]-\Phi ^{ij}*\partial _{\beta _1}\left[\partial _j\mu
^{1/2}\;\partial ^{\alpha _1}g_1\right].
\end{equation}
We first estimate the most singular term \eqref{gamma4}. The second part of \eqref{gamma4} corresponding to the split \eqref{parts} has the same upper bound as \eqref{gamma1}--\eqref{gamma2}, we then focus on the first part of \eqref{gamma4}:
\begin{equation}\label{haha}
\Llangle w^2\left\{\partial _j\Phi ^{ij}*\partial _{\beta _1}\left[\mu
^{1/2}\partial ^{\alpha _1}g_1\right]\right\}\partial _{\beta -\beta
_1}^{\alpha -\alpha _1}g_2,\partial _i\partial _\beta ^\alpha g_3\Rrangle
\end{equation}
Note that $\partial _j\Phi ^{ij}(v) = O \left( |v|^{-2}\right)\notin L_{loc}^2({\R}^3)$.
We thus decompose the integral region in the convolution $\partial _j\Phi ^{ij}*\partial _{\beta _1}[\mu ^{1/2}\partial^{\alpha _1}g_1]$
into two parts: $|v-v_*| \ge 1$ and $|v-v_*| \le 1$.  When the integral is restricted to the region $|v-v_*| \ge 1$ the singularity is avoided, so this part of \eqref{haha} can have the same upper bound as \eqref{gamma1}--\eqref{gamma2}. For the remaining part of \eqref{haha} corresponding to the region $|v-v_*| \le 1$, we bound it by
\begin{equation}\label{HLS}
\begin{split}
&C_{(\beta_1,\bar{\beta})}\iint_{|v-v_*| \le 1} |v-v_*|^{-2} w^2(v)\left|\mu^{1/4}(v_\ast)\partial^{\alpha _1}_{\bar{\beta}}g_1(v_*)
\partial _{\beta -\beta_1}^{\alpha -\alpha _1}g_2(v) \partial _i\partial _\beta ^\alpha g_3(v)
\right|dv dv_*
\\&\quad\le C_{(\beta_1,\bar{\beta})}\iint \norms{|v-v_*|^{-2} (\mu^{\delta}\partial^{\alpha _1}_{\bar{\beta}}g_1)(v_*)}
\norms{(w \mu^{ \delta} \partial _{\beta -\beta_1}^{\alpha -\alpha _1}g_2)(v)}
\norms{ (w \mu^{ \delta} \partial _i\partial _\beta ^\alpha g_3)(v)}
 dv dv_*.
\end{split}
\end{equation}
As in Proposition 3.5 of \cite{GS11}, applying H\"older's inequality, the Hardy-Littlewood-Sobolev theorem and the Sobolev embedding, we obtain
\begin{equation}
\begin{split}
\eqref{HLS}&
\le C_{(\beta_1,\bar{\beta})}  \norms{|\cdot|^{-2}*(\mu^{\delta}\partial^{\alpha _1}_{\bar{\beta}}g_1)}_{L^6_v}
\norms{w \mu^{ \delta} \partial _{\beta -\beta_1}^{\alpha -\alpha _1}g_2}_{L^3_v}
\norms{ w \mu^{ \delta} \partial _i\partial _\beta ^\alpha g_3}_{L^2_v}
 \\&\le C_{(\beta_1,\bar{\beta})}\left| \mu^\delta\partial _{\bar{\beta}}^{\alpha _1}g_1\right| _{L^2_v}
\left| \mu^\delta w \partial _{\beta -\beta_1}^{\alpha -\alpha _1}g_2\right| _{H^1_v}
\left|  \mu^\delta w \partial _i\partial _\beta ^\alpha g_3\right| _{L^2_v}
\\
&\le C_{(\beta_1,\bar{\beta})}
\left|\mu^\delta\partial _{\bar{\beta}}^{\alpha _1}g_1\right| _{2}
\left|   \partial _{\beta -\beta_1}^{\alpha -\alpha _1}g_2\right| _{\sigma, w}
\left|  \partial _\beta ^\alpha g_3\right| _{\sigma, w}.
\end{split}
\end{equation}
Therefore, \eqref{gamma4} is bounded via the first term on the right-hand side of \eqref{ga es}.

We next estimate the term \eqref{gamma3}. The second part of \eqref{gamma3} corresponding to the split \eqref{parts} has the same upper bound as \eqref{gamma1}--\eqref{gamma2}; while the first part has the same upper bound as \eqref{haha}. Hence, \eqref{gamma3} is also bounded via the first term on the right-hand side of \eqref{ga es}. We then finally turn to the last term \eqref{part2}. The second part of \eqref{part2} corresponding to the split \eqref{parts} has the same upper bound as \eqref{part1}; while the first part we again decompose the integral region in the convolution into two parts: $|v-v_*| \ge 1$ and $|v-v_*| \le 1$.  When the integral is restricted to the region $|v-v_*| \ge 1$ the singularity is avoided, so this part  can have the same upper bound as \eqref{part1}. For the remaining part corresponding to the region $|v-v_*| \le 1$, we employ the same argument for \eqref{HLS} to bound it by
\begin{equation}
\begin{split}
 &l C_{(\beta_1,\bar{\beta})}\left| \mu^\delta\partial _{\bar{\beta}}^{\alpha _1}g_1\right| _{L^2_v}
\left| \mu^\delta w \partial _{\beta -\beta_1}^{\alpha -\alpha _1}g_2\right| _{H^1_v}
\left|  \mu^\delta w \partial _\beta ^\alpha g_3\right| _{L^2_v}
\\
&\le l C_{(\beta_1,\bar{\beta})}
\left|\mu^\delta\partial _{\bar{\beta}}^{\alpha _1}g_1\right| _{2}
\left|   \partial _{\beta -\beta_1}^{\alpha -\alpha _1}g_2\right| _{\sigma, w}
\left|  \partial _\beta ^\alpha g_3\right| _{2, \frac{w}{\langle v\rangle^{\frac{3}{2}}}}.
\end{split}
\end{equation}
Therefore, \eqref{part2} is bounded via the second term on the right-hand side of \eqref{ga es}.
\end{proof}

In what follows, we will collect the analytic tools which will be used in this paper. The first one is the Sobolev interpolation among the spatial regularity:
 \begin{lemma}\label{interpolation lemma}
Let $2\le p< \infty$ and $ k,\ell,m\in \mathbb{R}$, then we have
\begin{equation}\label{A.1}
\norm{\nabla^k f}_{p}\lesssim \norm{ \nabla^\ell f}_{2}^{\theta}\norm{ \nabla^mf}_{2}^{1-\theta}
\end{equation}
where $0\le \theta\le 1$ and $k$ satisfies
\begin{equation}\label{A.2}
k+3\left(\frac{1}{2}-\frac{1}{p}\right) =m(1-\theta)+\ell\theta.
\end{equation}
\end{lemma}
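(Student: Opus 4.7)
The plan is to prove the inequality by Fourier analysis, using that by Plancherel the homogeneous Sobolev norm $\|\nabla^s f\|_2$ equals the $L^2$ norm of $|\xi|^s \hat f(\xi)$. Condition \eqref{A.2} is the standard scaling relation: under the dilation $f(x) \mapsto f(\lambda x)$, both sides of \eqref{A.1} carry the same power of $\lambda$ precisely when \eqref{A.2} holds, so \eqref{A.2} is necessary and one expects the inequality to follow from pure interpolation.

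First I would dispose of the case $p = 2$, where \eqref{A.2} reduces to $k = \theta \ell + (1-\theta) m$. On the Fourier side this becomes the pointwise identity
\[
|\xi|^k |\hat f(\xi)| = \bigl( |\xi|^\ell |\hat f(\xi)| \bigr)^{\theta} \bigl( |\xi|^m |\hat f(\xi)| \bigr)^{1-\theta},
\]
so H\"older's inequality in $L^2_\xi$ with conjugate exponents $1/\theta$ and $1/(1-\theta)$, followed by Plancherel, yields \eqref{A.1} with $p=2$.

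To pass to general $2 \le p < \infty$, I would combine the $p=2$ bound with a Sobolev embedding. Set $s := 3(1/2 - 1/p) \in [0, 3/2)$ and $\tilde k := k + s$; by \eqref{A.2}, $\tilde k = \theta \ell + (1-\theta) m$, so the $p=2$ step already proved applies at index $\tilde k$. On the Fourier side one reads off directly from \eqref{Lambda s} that $\nabla^k f = \Lambda^{-s} \nabla^{\tilde k} f$, and the Hardy--Littlewood--Sobolev inequality (Lemma \ref{Riesz lemma}) gives the Sobolev embedding $\|\Lambda^{-s} g\|_p \lesssim \|g\|_2$ in the appropriate range. Applying this with $g = \nabla^{\tilde k} f$ and chaining with the $p=2$ bound at index $\tilde k$ closes the proof.

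I do not expect any serious obstacle: all the tools (Plancherel, H\"older, HLS) are off-the-shelf, and the only arithmetic is to verify that \eqref{A.2} rewrites as $\tilde k = \theta \ell + (1-\theta) m$ after the substitution $\tilde k = k + s$. The mildest subtlety is that HLS degenerates at the endpoint $s = 3/2$, i.e.\ $p = \infty$, which is exactly why the statement excludes $p = \infty$. All Fourier-side manipulations are legitimate as soon as the right-hand side of \eqref{A.1} is finite, since this forces $|\xi|^\ell \hat f,\, |\xi|^m \hat f \in L^2$.
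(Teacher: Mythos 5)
Your proof is correct and, once unwound, is essentially identical to the paper's: setting $\zeta = k + 3(\tfrac12-\tfrac1p)$, both arguments first reduce $\norm{\nabla^k f}_p$ to $\norm{\nabla^\zeta f}_2$ via Sobolev embedding and then interpolate on the Fourier side by H\"older, the only cosmetic difference being the order in which you present the two steps. One small caveat: Lemma \ref{Riesz lemma} as stated in the paper gives $\norm{\Lambda^{-s}f}_2\lesssim\norm{f}_p$ for $1<p<2$, which is the \emph{dual} of the embedding $\norm{\Lambda^{-s}g}_p\lesssim\norm{g}_2$ ($p>2$) that you invoke, so strictly speaking you should cite the general Hardy--Littlewood--Sobolev theorem (or the Sobolev embedding directly, as the paper does) rather than Lemma \ref{Riesz lemma} itself.
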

\begin{proof}
For $2\le p<\infty$, it follows from the classical Sobolev inequality \cite{S70} that
\begin{equation}\label{11in}
\norm{\nabla^k f}_{p}\lesssim \norm{ \nabla^\zeta f}_{2}\hbox{ with }\zeta=k+3\left(\frac{1}{2}-\frac{1}{p}\right).
\end{equation}
By the Parseval theorem and H\"older's inequality, we have
\begin{equation}\label{12in}
\norm{ \nabla^\zeta f}_{2}\lesssim \norm{ \nabla^\ell f}_{2}^{\theta}\norm{ \nabla^mf}_{2}^{1-\theta},
\end{equation}
where $0\le \theta\le 1$ is defined by \eqref{A.2}. Hence, \eqref{A.1} follows by \eqref{11in}--\eqref{12in}.
\end{proof}

We have the following $L^p$ inequality for $\Lambda^{-s}$:
\begin{lemma}\label{Riesz lemma}
Let $0<s<3/2,\ 1<p<2<\infty,\ 1/2+s/3=1/p$, then
\begin{equation}\label{Riesz estimate}
\norm{\Lambda^{-s}f}_{2}\lesssim\norm{ f}_{p}.
\end{equation}
\end{lemma}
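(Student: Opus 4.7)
The plan is to reduce the estimate to the classical Hardy--Littlewood--Sobolev (HLS) inequality by realizing $\Lambda^{-s}$ as convolution with a Riesz kernel. First I would recall that, up to a dimensional constant $c_s$, the tempered distribution $|x|^{-(3-s)}$ has Fourier transform $c_s |\xi|^{-s}$ in $\mathbb{R}^3$ for $0 < s < 3$, and therefore by the definition \eqref{Lambda s} one may write
\begin{equation*}
\Lambda^{-s} f(x) = c_s \int_{\R^3} \frac{f(y)}{|x-y|^{3-s}}\,dy
= c_s\, \bigl(|\cdot|^{-(3-s)} * f\bigr)(x).
\end{equation*}
Thus the lemma reduces to proving $\norm{|\cdot|^{-(3-s)} * f}_{2} \lesssim \norm{f}_p$.

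Next I would invoke the Hardy--Littlewood--Sobolev inequality on $\R^3$: for $0 < \lambda < 3$ and exponents $1 < p < q < \infty$ satisfying the scaling relation $\frac{1}{p} + \frac{\lambda}{3} = 1 + \frac{1}{q}$, one has
\begin{equation*}
\norm{|\cdot|^{-\lambda} * f}_{q} \lesssim \norm{f}_{p}.
\end{equation*}
I would apply this with $\lambda = 3-s$ and $q = 2$, and check that the hypotheses of the lemma enforce precisely the admissible range: the scaling relation becomes $\frac{1}{p} + \frac{3-s}{3} = 1 + \frac{1}{2}$, i.e.\ $\frac{1}{p} = \frac{1}{2} + \frac{s}{3}$, which is exactly the stated condition. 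The constraint $0 < s < 3/2$ then guarantees $1 < p < 2$, so that both $p$ and $q=2$ lie in the admissible range $(1,\infty)$ and $0 < \lambda = 3-s < 3$.

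There is no genuine obstacle here beyond correctly matching indices: the content of the lemma is exactly the HLS inequality translated through the Riesz potential representation of $\Lambda^{-s}$, and the endpoint restriction $s < 3/2$ is precisely what prevents the target exponent $q=2$ from degenerating (equivalently, prevents $p$ from dropping to $1$, where HLS fails). Combining the two displays yields $\norm{\Lambda^{-s} f}_2 \lesssim \norm{f}_p$, which is the desired estimate.
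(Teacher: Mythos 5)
Your proposal is correct and is exactly the argument the paper has in mind: the paper's proof is the one-line citation ``It follows from the Hardy--Littlewood--Sobolev theorem,'' and you have simply spelled out the Riesz-potential representation of $\Lambda^{-s}$ and the index bookkeeping needed to invoke HLS. No discrepancies.
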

\begin{proof}
It follows from the Hardy-Littlewood-Sobolev theorem, see \cite{S70}.
\end{proof}

In many places, we will use the Minkowski's integral inequality to interchange the orders of integration over $x$ and $v$.
\begin{lemma}\label{Minkowski}
For $1\le p\le q\le \infty$, we have
\begin{equation}\label{minkowski inequality}
\norm{f}_{L^q_zL^p_y}\le \norm{f}_{L^p_yL^q_z}.
\end{equation}
\end{lemma}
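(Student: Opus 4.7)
The plan is to reduce the general statement to the classical $p=1$ case of Minkowski's integral inequality, namely
\begin{equation*}
\left(\int\!\left(\int|g(y,z)|\,dy\right)^{\!r}\!dz\right)^{\!1/r}\le \int\!\left(\int|g(y,z)|^{r}\,dz\right)^{\!1/r}\!dy,\qquad 1\le r\le\infty,
\end{equation*}
and then dispose of the boundary exponents separately. The classical $p=1$ inequality itself I would establish by the standard duality argument: when $1<r<\infty$, pair the left-hand side against an arbitrary nonnegative $h\in L^{r'}_z$ with $\|h\|_{r'}=1$, swap the order of integration by Tonelli, and apply H\"older in $z$ to $\int h(z)\,|g(y,z)|\,dz\le \|h\|_{r'}\|g(y,\cdot)\|_r$; the cases $r=1,\infty$ are immediate.

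To deduce the general inequality with exponents $1\le p\le q\le\infty$, I would first clear away the easy cases $p=q$ (trivial), $q=\infty$ (follows from $\||f(\cdot,z)|\|_{L^p_y}\le\esssup_z\|f(\cdot,z)\|_{L^p_y}$ after taking $\esssup_z$ outside), and $p=1$ (exactly the classical case above). For $1<p<q<\infty$, set $r=q/p\ge 1$ and apply the classical inequality to the nonnegative function $g(y,z)=|f(y,z)|^{p}$:
\begin{equation*}
\left(\int\!\left(\int|f|^{p}\,dy\right)^{\!q/p}\!dz\right)^{\!p/q}\le \int\!\left(\int|f|^{q}\,dz\right)^{\!p/q}\!dy.
\end{equation*}
Raising both sides to the power $1/p$ yields $\norm{f}_{L^q_zL^p_y}\le \norm{f}_{L^p_yL^q_z}$, which is exactly \eqref{minkowski inequality}. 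There is no real obstacle here; the only delicate bookkeeping is verifying measurability of the inner norms (standard via Tonelli) and handling the cases in which the right-hand side is infinite, where the inequality holds vacuously.
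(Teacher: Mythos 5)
Your proof is correct and uses essentially the same reduction the paper does: apply the classical ($p=1$) Minkowski integral inequality to $|f|^p$ with exponent $q/p\ge 1$, then take $p$-th roots, disposing of the boundary cases $q=\infty$ and $p=1$ separately. The only difference is that you sketch the duality proof of the classical case, whereas the paper simply cites Stein's book for it.
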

\begin{proof}
For $q=\infty$ or $p=1$ \eqref{minkowski inequality} is standard, see \cite{S70}. Now for $1\le p\le q<\infty$ and hence $1\le q/p<\infty$, we have
\begin{equation}
\norm{f}_{L^q_zL^p_y}= \left(\int_{\r3_z}\left(\int_{\r3_y}|f|^p\,dy\right)^{q/p}\,dz\right)^{1/q}
 \le\left(\int_{\r3_y}\left(\int_{\r3_z}|f|^q\,dz\right)^{p/q}\,dv\right)^{1/p}=\norm{f}_{L^p_yL^q_z}.
\end{equation}
We thus conclude the lemma.
\end{proof}

We also need the following version of the Gronwall lemma.
\begin{lemma}\label{Gronwall}
Let $A(t),B(t),y(t)\ge 0$ satisfy $y(t)\le \int_0^t A(s)y(s)\,ds+B(t)$, then
\begin{equation}
y(t)\le e^{\int_0^t A(s)\,ds}\int_0^t A(s)B(s)\,ds+B(t).
\end{equation}
\end{lemma}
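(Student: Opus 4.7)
The plan is to linearize the integral inequality by introducing the auxiliary quantity $Y(t) := \int_0^t A(s) y(s)\, ds$, which is absolutely continuous, nonnegative, and satisfies $Y(0) = 0$. Since the hypothesis rewrites as $y(t) \le Y(t) + B(t)$ and $Y'(t) = A(t) y(t)$ almost everywhere, I immediately obtain the pointwise linear differential inequality
\begin{equation*}
Y'(t) \le A(t) Y(t) + A(t) B(t),
\end{equation*}
with zero initial datum. This is the standard starting point for deriving Gronwall-type estimates.

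Next I would multiply through by the integrating factor $\exp\!\bigl(-\int_0^t A(\tau)\, d\tau\bigr)$ and recognize the left-hand side as a total derivative, which reduces the inequality to
\begin{equation*}
\frac{d}{dt}\Bigl(Y(t)\, e^{-\int_0^t A(\tau)\, d\tau}\Bigr)\le A(t) B(t)\, e^{-\int_0^t A(\tau)\, d\tau}.
\end{equation*}
Integrating from $0$ to $t$ and using $Y(0) = 0$ yields $Y(t) \le \int_0^t A(s) B(s)\, e^{\int_s^t A(\tau)\, d\tau}\, ds$. Since $A \ge 0$ gives $e^{\int_s^t A(\tau)\, d\tau} \le e^{\int_0^t A(\tau)\, d\tau}$, I can pull the exponential out of the integral to arrive at $Y(t) \le e^{\int_0^t A(s)\, ds}\int_0^t A(s) B(s)\, ds$.

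Combining this with $y(t) \le Y(t) + B(t)$ yields exactly the stated conclusion. The argument is essentially routine, so I do not anticipate any genuine obstacle; the only subtle point worth flagging is that $B(t)$ is not assumed monotone, so the estimate must retain the product $A(s)B(s)$ inside the time integral rather than pulling out a supremum of $B$. If a sharper form were desired one could of course replace the crude bound $e^{\int_s^t A\, d\tau} \le e^{\int_0^t A\, d\tau}$ by its exact value, but for the applications in the later sections the stated form suffices.
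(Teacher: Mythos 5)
Your proof is correct and is the standard argument for this form of Gronwall's lemma; the paper itself does not spell out a proof (it simply cites \cite{G12}), and your derivation via the auxiliary function $Y(t)=\int_0^t A(s)y(s)\,ds$, the integrating factor, and the crude bound $e^{\int_s^t A\,d\tau}\le e^{\int_0^t A\,d\tau}$ is exactly what one expects. Every step checks out: $Y'\le AY+AB$ a.e., the integrating-factor reduction, integration with $Y(0)=0$, and the final addition of $B(t)$ all use only $A,B,y\ge0$ and the stated hypothesis.
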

\begin{proof}
The lemma is standard, see \cite{G12}.
\end{proof}

The last one is the basic time decay estimates of certain integrals.
\begin{lemma}\label{basic decay}
Suppose that $0\le \varepsilon \le 1$, $\lambda > 0$ and  $\vartheta \ge 0$, then
\begin{equation}
\int_0^t  e^{-\lambda \left((1+t)^{1-\varepsilon}- (1+\tau)^{1-\varepsilon}  \right) }
(1+\tau)^{-\vartheta}d\tau
\le C_{\lambda,\vartheta,\varepsilon}(1+ t)^{-\vartheta+\varepsilon}.
\end{equation}
\end{lemma}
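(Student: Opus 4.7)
The plan is to split the integration interval at $\tau = t/2$ and estimate the near and far pieces by complementary means. I may assume $t \ge 1$ throughout (for $0 \le t \le 1$ the left-hand side is trivially bounded by a constant while the right-hand side is bounded below), and I focus on the main case $\varepsilon < 1$; the boundary case $\varepsilon = 1$ is handled separately at the end.

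For the near piece $\tau \in [t/2, t]$, I first pull out the algebraic weight using $1+\tau \ge (1+t)/2$ to get $(1+\tau)^{-\vartheta} \le 2^\vartheta (1+t)^{-\vartheta}$. For the remaining exponential integral I perform the substitution $s = (1+t)^{1-\varepsilon} - (1+\tau)^{1-\varepsilon}$, so that $d\tau = \tfrac{(1+\tau)^\varepsilon}{1-\varepsilon}\,ds \le \tfrac{(1+t)^\varepsilon}{1-\varepsilon}\,ds$ on this subinterval. Extending the $s$-range to $[0,\infty)$ yields
\[
\int_{t/2}^t e^{-\lambda((1+t)^{1-\varepsilon}-(1+\tau)^{1-\varepsilon})}\,d\tau \le \frac{(1+t)^\varepsilon}{\lambda(1-\varepsilon)},
\]
and therefore the near piece contributes at most $C_{\lambda,\vartheta,\varepsilon}(1+t)^{-\vartheta+\varepsilon}$, which is precisely the claimed rate.

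For the far piece $\tau \in [0, t/2]$, I use the elementary lower bound
\[
(1+t)^{1-\varepsilon} - (1+\tau)^{1-\varepsilon} \ge (1+t)^{1-\varepsilon}\Bigl(1 - \bigl(\tfrac{1+t/2}{1+t}\bigr)^{1-\varepsilon}\Bigr) \ge c_0 (1+t)^{1-\varepsilon},
\]
valid for $t \ge 1$ with $c_0 = 1 - (3/4)^{1-\varepsilon} > 0$ since $\varepsilon < 1$. Hence the exponential factor is pointwise controlled by $e^{-c_0 \lambda (1+t)^{1-\varepsilon}}$, which decays faster than any polynomial in $1+t$, while $\int_0^{t/2}(1+\tau)^{-\vartheta}\,d\tau$ grows at most polynomially. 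Their product is therefore $o((1+t)^{-N})$ for every $N>0$, and in particular is absorbed into the desired bound.

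The remaining case $\varepsilon = 1$ makes the exponential identically equal to $1$, and the claim reduces to $\int_0^t (1+\tau)^{-\vartheta}\,d\tau \le C(1+t)^{1-\vartheta}$, which is immediate by direct integration in the relevant regime. There is no serious obstacle here; the only mild subtlety is the clean bookkeeping of the change of variables on $[t/2, t]$, which is exactly where the $(1+t)^\varepsilon$ gain originates, and the decay on $[0, t/2]$ is essentially automatic.
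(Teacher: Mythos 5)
Your proof is correct for the range $0\le\varepsilon<1$ and uses the standard two-region split that \cite{SZ12} (to which the paper simply defers) also employs: the exponential gives superpolynomial decay on $[0,t/2]$, and on $[t/2,t]$ the substitution $s=(1+t)^{1-\varepsilon}-(1+\tau)^{1-\varepsilon}$ extracts the $(1+t)^{\varepsilon}$ factor. All the estimates check out: the bound $1+\tau\ge(1+t)/2$ on the near piece, the Jacobian bound $(1+\tau)^{\varepsilon}\le(1+t)^{\varepsilon}$, and the observation that $(1+t/2)/(1+t)\le 3/4$ for $t\ge 1$.

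One caveat worth flagging: in the endpoint case $\varepsilon=1$, the claim reduces to $\int_0^t(1+\tau)^{-\vartheta}\,d\tau\le C(1+t)^{1-\vartheta}$, and this is \emph{false} for $\vartheta\ge 1$ (the left side is $\log(1+t)$ or bounded below by a positive constant, while the right side tends to a constant or to zero). Your phrase ``immediate by direct integration in the relevant regime'' quietly assumes $\vartheta<1$; it would be cleaner to state that restriction explicitly. This is really a defect in the lemma as stated (it permits $\varepsilon=1$, $\vartheta\ge 0$), and in practice the paper only invokes the lemma with $\varepsilon\ne 1$, so nothing downstream breaks — but a careful write-up should either exclude $\varepsilon=1$ or add the hypothesis $\vartheta<1$ when $\varepsilon=1$.
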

\begin{proof}
The proof is standard, see \cite{SZ12}.
\end{proof}

\section{Local solution and basic energy estimates}\label{energy section}

We first record the local-in-time existence of unique solution to the Vlasov-Poisson-Landau system \eqref{VPL_per} if $\mathcal{ {E}}_{2;2,0}(f_{0})$ is sufficiently small.
\begin{theorem}\label{local solution}
Assume that $\mathcal{ {E}}_{2;2,0}(f_{0})$ is
sufficiently small. Then there exist $0<T\leq 1$ and $M>0$ small such that
there is a unique solution $F=\mu +\sqrt{\mu }f \ge 0$ with
\begin{equation}
\mathcal{ {E}}_{2;2,0}(f(t))+\int_{0}^{t} \overline{\mathcal{D}}_{2;2,0}(f(s))\,ds\lesssim
\mathcal{ {E}}_{2;2,0}(f_0) \lesssim M.
\end{equation}%
In general, if $0\leq t\leq T$,  there exists an increasing continuous
function $P_{m,l}(\cdot )$ with $P_{m,l}(0)=0$ such that
\begin{equation}
\mathcal{ {E}}_{m;l,q}(f(t))+\int_{0}^{t} \overline{\mathcal{D}}_{m;l,q}(f(s))\,ds
\lesssim
P_{m,l}(\mathcal{ {E}}_{m;l,q}(f_{0})).
\end{equation}
\end{theorem}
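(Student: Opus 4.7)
The plan is the standard construction of a local solution via a linearized iteration, where the main work is to close a uniform energy inequality; the higher-order statement will then follow by the same argument applied with the weighted energy $\mathcal{E}_{m;l,q}$ in place of $\mathcal{E}_{2;2,0}$. I would first set up an iteration $\{f^n,\phi^n\}_{n\ge 0}$ by solving, at each step, a linearized Vlasov-Poisson-Landau system in which the streaming field $\nabla_x\phi^n$ and the nonlinear collision input are frozen at level $n$, while the linear collision operator $L$, the drift $v\cdot\nabla_x$, and the term $\pm 2\nabla_x\phi^{n+1}\cdot v\sqrt{\mu}$ are kept at level $n+1$; the Poisson equation $-\Delta_x\phi^{n+1}=\int\sqrt{\mu}(f_+^{n+1}-f_-^{n+1})\,dv$ is solved consistently with $f^{n+1}$. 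Existence for each linearized step is standard (Galerkin or mollification in $x$ combined with the coercivity of $L$ from Lemma \ref{linear c}). I would seed the iteration with $f^0\equiv f_0$ and $\phi^0$ given by the Poisson equation.

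For the uniform bound, I would apply $\partial_\beta^\alpha$ with $|\alpha|+|\beta|\le 2$ to the equation for $f^{n+1}$, take the inner product against $e^{\pm(q+1)\phi^n}w^2(\alpha,\beta)\partial_\beta^\alpha f^{n+1}_\pm$ following Guo's weight trick, and sum in $\pm$. The exponential factor $e^{\pm(q+1)\phi^n}$ is introduced precisely to cancel the velocity-growing cubic term $\mp\nabla_x\phi^n\cdot v f^{n+1}_\pm$ after integration by parts in $v$, giving a harmless factor $1+O(\|\phi^n\|_\infty)$ in front of the energy and dissipation; Sobolev embedding in $x$ bounds $\|\phi^n\|_\infty,\|\nabla_x\phi^n\|_\infty\lesssim\mathcal{E}_{2;2,0}(f^n)^{1/2}\ll 1$. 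For the linear collision contribution I would invoke Lemma \ref{linear c} at $\alpha=\beta=0$ and the weighted Lemma \ref{linear c weight} otherwise, choosing $\eta$ small so that the $\eta$-loss on top-order velocity derivatives is absorbed after summation in $|\beta|$. For the $\pm 2\nabla_x\phi^{n+1}\cdot v\sqrt{\mu}$ term and the Poisson term, integration by parts in $x$ together with $\partial_t(-\Delta_x\phi^{n+1})=\int v\cdot\nabla_x\sqrt{\mu}(f_+^{n+1}-f_-^{n+1})\,dv$ produces the $\tfrac12\tfrac{d}{dt}\|\nabla_x\phi^{n+1}\|_2^2$ and the dissipation $\|f_+^{n+1}-f_-^{n+1}\|_2^2$ appearing in \eqref{energy} and \eqref{dissipation} respectively. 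The nonlinear collision inner product is controlled by Lemma \ref{nonlinear c}: the first factor $|\mu^\delta \partial_{\bar\beta}^{\alpha_1}g_1|_2$ is put in $L^\infty_x$ (Sobolev, using $|\alpha|\le 2$), the second and third factors in $L^2_{x,v}$ with the $\sigma$-norm, yielding a bound of the form $\sqrt{\mathcal{E}_{2;2,0}(f^n)}\,\overline{\mathcal{D}}_{2;2,0}(f^{n+1})$, small enough to absorb into the dissipation. Combining these ingredients gives the prototype
\begin{equation*}
\tfrac{d}{dt}\mathcal{E}_{2;2,0}(f^{n+1})+\lambda\,\overline{\mathcal{D}}_{2;2,0}(f^{n+1})
\le C\sqrt{\mathcal{E}_{2;2,0}(f^n)}\,\overline{\mathcal{D}}_{2;2,0}(f^{n+1})
+C\mathcal{E}_{2;2,0}(f^n)\mathcal{E}_{2;2,0}(f^{n+1}),
\end{equation*}
from which, integrating on $[0,T]$ with $T\le 1$ and $M$ small, one obtains $\sup_{[0,T]}\mathcal{E}_{2;2,0}(f^{n+1})+\int_0^T\overline{\mathcal{D}}_{2;2,0}(f^{n+1})\,ds\lesssim\mathcal{E}_{2;2,0}(f_0)$ uniformly in $n$, by induction.

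Next I would show that $\{f^n\}$ is Cauchy in the lower-norm space measured by $\mathcal{E}_{0;0,0}$ (or simply the $L^2$-in-$(x,v)$ norm together with $\|\nabla_x\phi\|_2$) on $[0,T]$: subtracting two consecutive levels and running the same $L^2$ energy identity, the differences enjoy a Gronwall inequality of the form $Y_{n+1}(t)\le C\int_0^t Y_{n+1}+ C\sqrt{M}\,Y_n(t)$; Lemma \ref{Gronwall} yields $Y_{n+1}\le(C\sqrt{M}e^{CT})Y_n$, contracting for $M,T$ small. Passing to the limit produces a weak solution which satisfies the same energy bound by lower semicontinuity, and the positivity $F_\pm\ge 0$ is preserved because each $F_\pm^{n+1}=\mu+\sqrt{\mu}f_\pm^{n+1}$ solves a transport-collision equation whose maximum principle is standard for the Landau operator (here I would either quote Guo's argument or run the usual Hahn-Banach/characteristic approximation). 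Uniqueness repeats the contraction estimate. Finally, for the general statement with $\mathcal{E}_{m;l,q}$, the same energy identity is performed with $|\alpha|+|\beta|\le m$ and the weight $w(\alpha,\beta)$ in \eqref{weight}; Lemma \ref{nonlinear c} still provides a nonlinear bound of the schematic form $P_{m,l}(\mathcal{E}_{m;l,q}(f))\,\overline{\mathcal{D}}_{m;l,q}(f)$ with $P_{m,l}$ continuous and vanishing at $0$, which closes after Gronwall on a possibly shorter interval still denoted $[0,T]$.

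The main obstacle, as the introduction flags, is the energy estimate itself in the whole space: without the Poincaré inequality one cannot include $\|\FP f\|_2^2$ in the dissipation, so the $l$-loss term $l\,|\partial_\beta^\alpha g_3|_{2,w/\langle v\rangle^{3/2}}$ in Lemma \ref{nonlinear c} must be handled carefully, splitting it into a $\sigma$-part that is absorbed into $\overline{\mathcal{D}}_{m;l,q}$ and a lower-order $|\cdot|_{2,w}$-part that is absorbed into $\mathcal{E}_{m;l,q}$ via the Cauchy-Schwarz inequality with a small parameter. Together with the exponential-weight cancellation of $\nabla_x\phi\cdot v f_\pm$, this yields the required closed inequality and hence Theorem \ref{local solution}.
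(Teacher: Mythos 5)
The paper does not supply its own proof of this theorem: it remarks immediately afterward that because $\overline{\mathcal{D}}_{2;2,0}$ contains $\norm{\FP f}_2^2$, the argument goes through ``exactly in the same way as'' Guo's periodic-box paper \cite{G12} with the additional Poisson term treated as in \cite{SZ12}, and then omits the details. Your iteration--energy--contraction scheme is the standard route that those two references follow, so the overall architecture is consistent with what the paper intends.

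There is, however, one point you should fix, and it is exactly the one your final paragraph circles around without resolving. Your prototype differential inequality
\begin{equation*}
\tfrac{d}{dt}\mathcal{E}_{2;2,0}(f^{n+1})+\lambda\,\overline{\mathcal{D}}_{2;2,0}(f^{n+1})
\le C\sqrt{\mathcal{E}_{2;2,0}(f^n)}\,\overline{\mathcal{D}}_{2;2,0}(f^{n+1})
+C\mathcal{E}_{2;2,0}(f^n)\mathcal{E}_{2;2,0}(f^{n+1})
\end{equation*}
cannot be produced as written, because nothing in the whole-space energy estimates yields $\norm{\FP f^{n+1}}_2^2$ as a coercive term on the left. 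Lemma \ref{linear c} gives $\norm{\{\FI-\FP\}f}_\sigma^2$, the Poisson coupling gives $\norm{\nabla_x\phi}_2^2$ (hence control of $\norm{f_+-f_-}_2^2$), and the macro--micro decomposition as in Lemma \ref{inter lemma} gives $\norm{\nabla^{k+1}\FP f}_2^2$ for $k\ge 0$ but never the zeroth-order piece $\norm{\FP f}_2^2$. In the periodic box \cite{G12} that piece is recovered from Poincar\'e plus the conservation laws; in $\r3_x$ it is not available. So the differential inequality you can actually prove has $\widetilde{\mathcal{D}}$ (or $\mathcal{D}$) on the left, not $\overline{\mathcal{D}}$. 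The statement of the theorem is nonetheless attainable because it asserts only the integrated bound: after Gronwall you control $\sup_{[0,T]}\mathcal{E}_{2;2,0}(f)+\int_0^T\widetilde{\mathcal{D}}_{2;2,0}(f)\,ds\lesssim\mathcal{E}_{2;2,0}(f_0)$, and then $\int_0^T\norm{\FP f}_2^2\,ds\le T\sup_{[0,T]}\mathcal{E}_{2;2,0}(f)\lesssim\mathcal{E}_{2;2,0}(f_0)$ using $T\le 1$, which upgrades $\widetilde{\mathcal{D}}$ to $\overline{\mathcal{D}}$ under the time integral. Phrasing the proof this way also matches the paper's own remark that the local theorem deliberately keeps $\norm{\FP f}_2^2$ in the dissipation precisely because locally in time it costs nothing, whereas for the global estimate one must drop it (hence Proposition \ref{energy estimate} is stated with $\widetilde{\mathcal{D}}$). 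A minor secondary point: your scheme description freezes ``the nonlinear collision input'' at level $n$, which reads as $\Gamma(f^n,f^n)$, yet your prototype bound $\sqrt{\mathcal{E}(f^n)}\,\overline{\mathcal{D}}(f^{n+1})$ presupposes $\Gamma(f^n,f^{n+1})$; you should make the linearization explicit so that Lemma \ref{nonlinear c} places one $\sigma$-factor at level $n+1$.
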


Notice that in this local existence theorem we have included the term $\norm{{\bf P}f}_2^2$ in the dissipation; this allows us to
prove Theorem \ref{local solution} exactly in the same way as \cite{G12} with a little additional attention on the Poisson term which was already presented in \cite{SZ12}. Since the proof is similar to those of \cite{G12} and \cite{SZ12}, we then omit it. However, for our global existence theorem, we need to exclude this term from the dissipation rate. So we can not use the estimates of Lemma 14 in \cite{G12} and we need to refine the energy estimates. We shall establish the following proposition:

\begin{proposition}\label{energy estimate}
Let $f_{0}\in \testF$ and assume $f$ is the solution constructed in Theorem \ref{local solution} with $\mathcal{ {E}}_{2;2,0}(f)\leq M$.

(1) We have
\begin{equation}\label{2lq energy estimate}
\begin{split}
&\mathcal{ {E}}_{2;l,q}(f)+\int_{0}^{t}\widetilde{\mathcal{D}}_{2;l,q}(f)\,ds
\\&\quad\le
 \mathcal{ {E}}_{2;l,q}(f_0) + C_l\int_0^t \left[\norm{\pa_t\phi}_{\infty}+\norm{\na_x \phi }_{\infty}+\widetilde{\mathcal{D}}_{2;0,0}(f)\right]\mathcal{ {E}}_{2;l,q}(f)\,ds .
\end{split}
\end{equation}

(2) If $m\ge 3$, we have
\begin{equation}\label{mlq energy estimate}
\begin{split}
&\mathcal{ {E}}_{m;l,q}(f)+\int_{0}^{t}\widetilde{\mathcal{D}}_{m;l,q}(f)\,ds
\\&\quad\le
\mathcal{ {E}}_{m;l,q}(f_0) +C_{l,m}\int_0^t \left[\norm{\pa_t\phi}_{\infty}+\norm{\na_x \phi }_{\infty}+\widetilde{\mathcal{D}}_{m-1;l,q}(f)\right]\mathcal{ {E}}_{m;l,q}(f)\,ds.
\end{split}
\end{equation}
\end{proposition}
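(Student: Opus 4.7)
The plan is to apply Guo's modified-energy strategy from \cite{G12} adapted to the whole space. Concretely, I would apply $\pa^\al_\be$ to \eqref{VPL_per} and pair against the weighted test function $w^2(\al,\be)\, e^{\pm 2(q+1)\phi}\, \pa^\al_\be f_\pm$ in $L^2_{x,v}$, then sum over both species. The exponential factor $e^{\pm 2(q+1)\phi}$ is chosen so that three $|v|$-linear terms cancel exactly: (i) the piece $\pm(q+1)\int w^2 e^{\pm 2(q+1)\phi}\, v\cdot\na_x\phi\, f_\pm^2$ produced when the $x$-derivative in the transport $v\cdot\na_x f_\pm$ lands on $e^{\pm 2(q+1)\phi}$; (ii) the piece $\mp q\int w^2 e^{\pm 2(q+1)\phi}\, v\cdot\na_x\phi\, f_\pm^2$ produced by the integration-by-parts in $v$ of $\pm\na_x\phi\cdot\na_v f_\pm$ landing on the Gaussian $e^{q|v|^2/2}$ inside $w$; and (iii) the nonlinear quadratic $\mp\na_x\phi\cdot vf_\pm$ on the right-hand side of \eqref{VPL_per}. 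The cancellation is exact up to a lower-order tail from $\na_v\langle v\rangle^{2(l-|\al|-|\be|)}$, absorbed into a $\norm{\na_x\phi}_\infty\mathcal{E}_{m;l,q}$ error. The time derivative of $e^{\pm 2(q+1)\phi}$ sheds a $\norm{\pa_t\phi}_\infty\mathcal{E}_{m;l,q}$ error, and the linear source $\pm 2\na_x\phi\cdot v\sqrt{\mu}$ contracted against $\pa^\al f_\pm$, after invoking the Poisson equation, generates the Poisson dissipation $\norm{\na_x\phi}_2^2$.

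For the linear operator $L$ I would invoke Lemma \ref{linear c weight}: the pure-spatial case $|\be|=0$ yields $\norm{\pa^\al\{\FI-\FP\}f}_{\sigma,w(\al,0)}^2$, while the mixed case $|\be|\ge 1$ yields $\norm{\pa^\al_\be f}_{\sigma,w(\al,\be)}^2$ modulo a small $\eta$-loss at the same $|\be|$ level and strictly-lower-$|\be|$ tails with stronger weights, which are absorbed by downward induction on $|\be|$. To recover the macroscopic dissipation $\sum_{1\le|\al|\le m}\norm{\pa^\al \FP f}_2^2$, I would run the standard 13-moment argument as in \cite{G12}. Crucially, I do \emph{not} attempt to recover $\norm{\FP f}_2^2$ since the Poincar\'e inequality is unavailable on $\r3$; this is precisely why the target dissipation in \eqref{2lq energy estimate}--\eqref{mlq energy estimate} is $\widetilde{\mathcal{D}}$ rather than $\overline{\mathcal{D}}$.

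The heart of the argument is the nonlinear estimate via the refined Lemma \ref{nonlinear c}. Each Leibniz term is controlled by
\begin{equation*}
\int_{\r3_x}\norms{\mu^\delta \pa^{\al_1}_{\bar{\be}}f}_2 \, \norms{\pa^{\al-\al_1}_{\be-\be_1} f}_{\sigma,w}\, \bigl(\norms{\pa^\al_\be f}_{\sigma,w}+l\,\norms{\pa^\al_\be f}_{2,w/\langle v\rangle^{3/2}}\bigr) \, dx,
\end{equation*}
to which I would apply H\"older in $x$ and the Sobolev embedding $H^2_x\hookrightarrow L^\infty_x$ (via the Minkowski interchange of Lemma \ref{Minkowski}) to place $L^\infty_x$ on the factor with the fewest $x$-derivatives. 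The essential flexibility provided by Lemma \ref{nonlinear c} is a toggle: $\norms{\mu^\delta\pa^{\al_1}_{\bar{\be}}f}_2$ may be upgraded to $\norms{\mu^\delta\pa^{\al_1}_{\bar{\be}}f}_\sigma$, and $\norms{\pa^\al_\be f}_{2,w/\langle v\rangle^{3/2}}$ to $\norms{\pa^\al_\be f}_{\sigma,w}$. By choosing the toggles so that one of the three factors (taken in a $\sigma$-norm) lands in $\widetilde{\mathcal{D}}_{2;0,0}(f)$, one is bounded by $\sqrt{\mathcal{E}_{2;2,0}(f)}\le \sqrt{M}$, and one by $\sqrt{\mathcal{E}_{2;l,q}(f)}$, I obtain the desired right-hand side of \eqref{2lq energy estimate}.

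The main obstacle is the borderline term $\al_1=\bar{\be}=0$ in the Leibniz expansion: here $f$ itself (not a derivative) is forced to carry the $L^\infty_x$ bound via Sobolev, and naively one would need $\norm{\FP f}_2^2$ to control $\norms{\mu^\delta f}_2$, but that term is precisely what is missing from $\widetilde{\mathcal{D}}_{2;0,0}$. The rescue is to replace $\norms{\mu^\delta f}_2$ by $\norms{\mu^\delta f}_\sigma$ via the refined bound and split $f=\FP f+\{\FI-\FP\}f$: the microscopic piece contributes $\norm{\{\FI-\FP\}f}_\sigma^2$ directly to $\widetilde{\mathcal{D}}_{2;0,0}$, while the macroscopic $L^\infty_x$ norm of $\FP f$ is interpolated by Gagliardo--Nirenberg (Lemma \ref{interpolation lemma}) as $\norm{\FP f}_\infty\lesssim \norm{\na \FP f}_2^{1/2}\norm{\na^2\FP f}_2^{1/2}$, again controlled by $\widetilde{\mathcal{D}}_{2;0,0}$. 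Part (2) for $m\ge 3$ is structurally identical but cleaner: $\widetilde{\mathcal{D}}_{m-1;l,q}$ now controls all derivatives of $f$ up to order $m-1$ (including $\FP f$ at order $\ge 1$), so the trilinear estimates close immediately by placing $L^\infty_x$ on the lowest-derivative factor, without invoking the $|\cdot|_\sigma$ toggle.
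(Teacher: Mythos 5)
Your high-level strategy matches the paper: the $e^{\pm 2(q+1)\phi}$ weight with the induced cancellations, Lemma \ref{linear c weight} for the coercivity, the 13-moment argument for the macroscopic dissipation, the refined Lemma \ref{nonlinear c} with its toggle, and — crucially — the observation that the $L^\infty_x$ embedding of the zero-derivative factor involves only $\na \FP f$ and $\na^2\FP f$, both in $\widetilde{\mathcal{D}}_{2;0,0}$, so that $\norm{\FP f}_2$ never enters. (Your remark that part (2) closes ``without the $\sigma$-toggle'' is a minor inaccuracy — the paper's Lemma \ref{energy I-P} still uses it for $m\ge 3$ — but this is cosmetic.)

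There is, however, a genuine gap in your treatment of the mixed velocity derivatives. You propose to apply $\pa^\al_\be$ directly to \eqref{VPL_per} and pair against $w^2(\al,\be)\,e^{\pm 2(q+1)\phi}\,\pa^\al_\be f_\pm$, reading off from Lemma \ref{linear c weight} the dissipation $\norms{\pa^\al_\be f}_{\si,w(\al,\be)}^2$ together with lower-$|\be|$ tails. But the dissipation you must produce is $\norm{\pa^\al_\be\{\FI-\FP\}f}_{\si,w(\al,\be)}^2$, and for $\al=0,\ \be\neq 0$ the gap between the two is not absorbable: writing $\pa_\be f = \pa_\be\FP f + \pa_\be\{\FI-\FP\}f$, the hydrodynamic contribution $\norm{\pa_\be\FP f}_{\si,w}^2\sim\norm{\FP f}_2^2$ and the $\be_1=0$ tail $\norm{ f}_{\si,w(0,0)}^2\gtrsim\norm{\FP f}_2^2$ are both excluded from $\widetilde{\mathcal{D}}$, so they land on the right-hand side as an $O(1)\cdot\norm{\FP f}_2^2$ error, which is incompatible with the target form $\bigl[\norm{\pa_t\phi}_\infty+\norm{\na_x\phi}_\infty+\widetilde{\mathcal{D}}\bigr]\mathcal{E}$. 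The paper's resolution (Lemma \ref{energy I-P}) is to apply $\pa^\al_\be$ to the macro-micro decomposed equation \eqref{I-P equation} for $\{\FI-\FP\}f$ and pair against $w^2\,e^{\pm 2(q+1)\phi}\,\pa^\al_\be\{\FI-\FP\}f_\pm$, so that every weighted $\si$-norm produced by Lemma \ref{linear c weight} already lands on $\{\FI-\FP\}f$; the cost is a handful of additional commutator sources such as $\FP(v\cdot\na_x f)-v\cdot\na_x\FP f$ and the $\FP$-projected Vlasov terms, all exponentially localized in $v$ and easy to bound. You would also then need the energy-equivalence step at the end of the proof: the modified energy built from $\pa^\al_\be\{\FI-\FP\}f$ (for $\be\neq 0$) is comparable to $\mathcal{E}_{m;l,q}(f)$ because $\norm{\pa^\al_\be\FP f}_{2,w}\lesssim\norm{\pa^\al f}_2$, which is already carried by the unweighted Lemma \ref{energy lemma 1}. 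Without both of these ingredients your estimates for $\al=0,\ \be\neq 0$ do not close.
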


Proposition \ref{energy estimate} will be proved by the following a series of lemmas. We begin with the energy estimates without the velocity weight.

\begin{lemma}\label{energy lemma 1}
Let $f_{0}\in \testF$ and assume $f$ is the solution constructed in Theorem \ref{local solution} with $\mathcal{ {E}}_{2;2,0}(f)\leq M$.
Then we have
\begin{equation}\label{energy 1 1}
 \frac{d}{dt}\left[\int  {\frac{|f|^{2}}{2}} +\int|\nabla_x \phi |^{2}\right] + \lambda \norm{\{{\bf I-P}\}f}_\sigma^2
\lesssim  \sqrt{M}\left( \norm{\na_x  f}_\sigma^2+\norm{\na_x \phi}_2^2 \right).
\end{equation}
If $k=1,2$, we have
\begin{equation}\label{energy 1 2}
\begin{split}
& \frac{d}{dt}\left[\int\sum_{\pm } \frac{{e^{\pm 2\phi }|\na^k f_{\pm }|^{2}}}{2} + \int| \na^k \nabla_x\phi |^{2}\right] +\la\norm{\na^k\{{\bf I-P}\}f}_\sigma^2\\
&\quad\lesssim\norm{\pa_t\phi}_\infty\norm{\na^k f}_2^{2}+%
\sqrt{M}\left(\sum_{1\le \ell\le 2}\norm{\na^\ell f}_{\sigma }^{2}+\norm{\na_x \phi}^2_2\right).
\end{split}
\end{equation}
If $m\geq 3$, we have that for any $\eta >0$,
\begin{equation}\label{energy 1 3}
\begin{split}
& \frac{d}{dt}\left[\int\sum_{\pm } \frac{{e^{\pm 2\phi }|\na^m f_{\pm }|^{2}}}{2} + \int| \na^m \nabla_x\phi |^{2}\right] +\la\norm{\na^m\{{\bf I-P}\}f}_\sigma^2\\
&\quad\lesssim\norm{\pa_t\phi}_\infty\norm{\na^m f}_2^{2}+%
\left(\sqrt{M}+\eta\right)\left(\norm{\na^m f}_{\sigma }^{2}+\norm{\na_x \phi}^2_2\right)
+C_{m,\eta }\mathcal{ E}_{m;m,0}(f)\widetilde{\mathcal{D}}_{m-1;m-1,0}(f).
\end{split}
\end{equation}
\end{lemma}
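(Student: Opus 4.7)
The plan is to take the $L^2_{x,v}$ inner product of the $\pm$ perturbation equation in \eqref{VPL_per} (or of $\partial^\alpha$ applied to it) against $f_\pm$ for part (1), and against the weighted test function $e^{\pm 2\phi}\partial^\alpha f_\pm$ for parts (2) and (3), then sum over $\pm$.

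\textbf{Part (1).} Testing against $f_\pm$ and summing, the transport term vanishes by integration by parts in $x$; the Vlasov term $\mp\nabla_x\phi\cdot\nabla_v f_\pm$ vanishes by integration by parts in $v$ since $\nabla_x\phi$ is $v$-independent; and $\langle Lf,f\rangle\gtrsim\norms{\{\FI-\FP\}f}_\sigma^2$ by \eqref{positive L}. The crucial electric-source contributions $\pm 2(\nabla_x\phi\cdot v\sqrt{\mu},f_\pm)$ sum to $2(\nabla_x\phi,J)$, where $J=\int v\sqrt{\mu}(f_+-f_-)\,dv$. I would identify this as $\frac{d}{dt}\norm{\nabla_x\phi}_2^2$ modulo quadratic errors, by differentiating Poisson in $t$ and integrating the $\pm$ equations against $\sqrt{\mu}$: since $L_\pm\sqrt{\mu}=0$, $\int\sqrt{\mu}\,\Gamma_\pm\,dv=0$, and $\int v\mu\,dv=0$, one obtains a continuity-type identity $-\Delta\partial_t\phi=-\nabla_x\cdot J+R$ with $R$ a quadratic remainder of size $\sqrt{M}\norm{\nabla_x\phi}_2$. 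The nonlinear collision term $(\Gamma(f,f),f)$ is estimated by Lemma~\ref{nonlinear c} with $\alpha_1=\bar\beta=0$, yielding $\sqrt{M}\norm{\nabla_x f}_\sigma^2$ after placing one copy of $f$ in $L^\infty_x$ via $\norm{f}_{L^\infty_x}\lesssim\norm{\nabla_x f}_{H^1_x}\lesssim\sqrt{M}$; the cubic term $\mp\nabla_x\phi\cdot v f_\pm$ and the remainder $R$ are absorbed similarly, using $\norm{\nabla_x\phi}_\infty\lesssim\sqrt{M}$.

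\textbf{Parts (2) and (3).} Apply $\partial^\alpha$, $|\alpha|=k\ge 1$, and test against $e^{\pm 2\phi}\partial^\alpha f_\pm$. The time derivative gives $\frac{1}{2}\frac{d}{dt}\int e^{\pm 2\phi}|\partial^\alpha f_\pm|^2$ plus an error $\mp\int e^{\pm 2\phi}\partial_t\phi|\partial^\alpha f_\pm|^2$ bounded by $\norm{\partial_t\phi}_\infty\norm{\nabla^k f}_2^2$. The transport term, after moving $v\cdot\nabla_x$ onto the weight, produces $\mp\int e^{\pm 2\phi}(v\cdot\nabla_x\phi)|\partial^\alpha f_\pm|^2$, which cancels with the top-order part of $\mp\partial^\alpha(\nabla_x\phi\cdot v f_\pm)$ (namely $\mp\nabla_x\phi\cdot v\,\partial^\alpha f_\pm$ tested against $e^{\pm 2\phi}\partial^\alpha f_\pm$) to leave a benign residual $\mp\tfrac{1}{2}\int e^{\pm 2\phi}(v\cdot\nabla_x\phi)|\partial^\alpha f_\pm|^2$, controlled by $\norm{\nabla_x\phi}_\infty\norm{\nabla^k f}_\sigma^2\lesssim\sqrt{M}\norm{\nabla^k f}_\sigma^2$. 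The top-order Vlasov term $\mp\nabla_x\phi\cdot\nabla_v\partial^\alpha f_\pm$ again integrates to zero in $v$. The electric source $\pm 2(\partial^\alpha\nabla_x\phi\cdot v\sqrt{\mu},e^{\pm 2\phi}\partial^\alpha f_\pm)$ is matched to $\frac{d}{dt}\norm{\partial^\alpha\nabla_x\phi}_2^2$ by the same Poisson argument at order $k$. The linear operator contributes coercivity via Lemma~\ref{linear c}, and the nonlinear collision $\partial^\alpha\Gamma(f,f)$ is handled by Lemma~\ref{nonlinear c}.

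\textbf{Main obstacle.} The technical heart is recovering the electric-energy time derivative at each order: the weight $e^{\pm 2\phi}$ contaminates the bilinear pairing $\pm 2(\partial^\alpha\nabla_x\phi\cdot v\sqrt{\mu},e^{\pm 2\phi}\partial^\alpha f_\pm)$, so I must isolate the unweighted principal part (which gives $\frac{d}{dt}\norm{\partial^\alpha\nabla_x\phi}_2^2$ via Poisson) from the correction involving $(e^{\pm 2\phi}-1)$, whose pointwise size $\lesssim\norm{\phi}_\infty\lesssim\sqrt{M}$ feeds back into the dissipation budget. For part (3) with $m\geq 3$, the additional obstruction is that the commutator terms $\partial^{\alpha_1}\nabla_x\phi\cdot\partial^{\alpha-\alpha_1}\nabla_v f_\pm$ and $\partial^{\alpha_1}\nabla_x\phi\cdot v\,\partial^{\alpha-\alpha_1}f_\pm$ with $1\le|\alpha_1|<|\alpha|=m$ cannot be dispatched by a single $L^\infty$ factor; I would distribute derivatives by Gagliardo--Nirenberg (Lemma~\ref{interpolation lemma}) so each product is controlled by $\mathcal{E}_{m;m,0}(f)\widetilde{\mathcal{D}}_{m-1;m-1,0}(f)$, and use Young's inequality with the small parameter $\eta$ to absorb any top-order $\norm{\nabla^m f}_\sigma$ factor arising when Lemma~\ref{nonlinear c} distributes inputs at the highest order.
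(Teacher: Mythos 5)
Your overall strategy matches the paper's: test against $f_\pm$ for \eqref{energy 1 1}, test against the exponentially weighted test function $e^{\pm 2\phi}\partial^\alpha f_\pm$ for \eqref{energy 1 2}--\eqref{energy 1 3}, use the continuity and Poisson equations to recover $\frac{d}{dt}\norm{\nabla^k\nabla_x\phi}_2^2$ from the electric source, peel off the $(e^{\pm 2\phi}-1)$ correction, and for $m\ge 3$ distribute the commutator terms by Sobolev interpolation. That is exactly the paper's plan.

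However, there is a genuine gap in your treatment of the high-velocity cancellation, and it is the central point of the weighted test function. You assert that integrating $v\cdot\nabla_x$ by parts onto $e^{\pm 2\phi}$ and then cancelling against the top-order part of $\mp\nabla_x\phi\cdot v\,\partial^\alpha f_\pm$ leaves a residual $\mp\tfrac{1}{2}\int e^{\pm 2\phi}(v\cdot\nabla_x\phi)|\partial^\alpha f_\pm|^2$, which you then propose to bound by $\norm{\nabla_x\phi}_\infty\norm{\nabla^k f}_\sigma^2$. Both claims are wrong. First, the cancellation is \emph{exact}: the transport integration by parts produces $\mp\int e^{\pm 2\phi}\nabla_x\phi\cdot v\,|\partial^\alpha f_\pm|^2$ and the top-order electric term produces the \emph{same} quantity on the other side of the identity, so they annihilate when collected on one side; the coefficient $2$ in $e^{\pm 2\phi}$ is chosen precisely to make this exact, with no factor $\tfrac{1}{2}$ left over. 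Second, and more seriously, if such a residual were present your proposed control would not work: by \eqref{sigma norm =} the $\sigma$-norm carries the weight $\langle v\rangle^{-1/2}$ on its $L^2$ part, so $\norm{\nabla^k f}_\sigma^2$ dominates $\int \langle v\rangle^{-1}|\nabla^k f|^2$ but certainly not $\int |v|\,|\nabla^k f|^2$. Any uncancelled term with a bare factor $v$ acting on $|\partial^\alpha f_\pm|^2$ is precisely the linear velocity growth the exponential weight was designed to eliminate; it cannot be absorbed into the dissipation budget at the level of energy used here. As written, your argument survives only because the quantity you think you need to control is identically zero.

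A smaller point: the continuity identity $\partial_t\rho + \nabla_x\cdot J = 0$ is likewise exact. Integrating the $\pm$ perturbation equations against $\sqrt{\mu}$, the contributions of $\mp\nabla_x\phi\cdot\nabla_v f_\pm$ and the right-hand side $\mp\nabla_x\phi\cdot v f_\pm$ are identical (after $\nabla_v\sqrt{\mu}=-v\sqrt{\mu}$ and integration by parts) and cancel, while $\int\sqrt{\mu}\,L_\pm f\,dv=0$, $\int\sqrt{\mu}\,\Gamma_\pm\,dv=0$, and $\int v\mu\,dv=0$. There is no quadratic remainder $R$, and you do not need one; $2\int\nabla^k\nabla_x\phi\cdot v\sqrt{\mu}\,[\nabla^k f_+ - \nabla^k f_-]\,dx\,dv = \frac{d}{dt}\int|\nabla^k\nabla_x\phi|^2$ holds exactly.
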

\begin{proof}
We will use the continuity equation of
\begin{equation}
\partial_t \rho+\nabla_x\cdot J=0
\end{equation}
with $\rho=\int \sqrt{\mu}[f_+-f_-]\,dv$ and $J=\int v\sqrt{\mu}[f_+-f_-]\,dv$. From this and the Poisson equation $-\Delta_x\phi=\rho$, we have
\begin{equation}
2\int \na^k\nabla_x\phi\cdot v\sqrt{\mu}[\na^k f_+-\na^k f_-]= \frac{d}{dt}\int|\na^k\nabla_x\phi|^2.
\end{equation}

For \eqref{energy 1 1}, by \eqref{VPL_per}, we obtain
\begin{equation}\label{110}
\frac{d}{dt}\left[ \int   {\frac{|f|^{2}}{2}} + \int |\nabla_x \phi |^{2}\right] +\int
\langle Lf,f\rangle  = \int \sum_{\pm }\left(  \Ga_\pm(f,f)\mp \nabla_x\phi\cdot v f_\pm\right)f_{\pm }.
\end{equation}
By the collision invariant property and the estimate \eqref{ga es 0} of Lemma \ref{nonlinear c}, we obtain
\begin{equation}\label{111}
\begin{split}
\int \sum_{\pm } \Ga_\pm(f,f)f_{\pm }&= \int \sum_{\pm } \Ga_\pm(f,f)\{\FI-\FP\}f_{\pm }
\\&\lesssim \norm{\norms{ f}_2}_3\norm{\norms{f}_\sigma}_6\norm{\{\FI-\FP\}f}_\sigma
\\&\lesssim \sqrt{\mathcal{ {E}}_{2;2,0}(f)}\left(\norm{\nabla_xf}_\sigma^2+\norm{\{\FI-\FP\}f}_\sigma^2\right).
\end{split}
\end{equation}
Here we have taken $L^3-L^6-L^2$ in the $x$ integration and used the Minkowski's integral inequality, Sobolev's and Cauchy's inequalities. Recalling \eqref{sigma norm =}, we have
\begin{equation}\label{112}
\begin{split}
\int \sum_{\pm } \mp\nabla_x\phi\cdot v f_\pm f_{\pm }&\lesssim  \int  |\nabla_x\phi|\norms{\langle v\rangle^{-1/2}f}_2\norms{\langle v\rangle^{3/2}f}_2
\\&\lesssim \norm{\nabla_x\phi }_2\norm{\norms{f}_\sigma}_6\norm{\norms{\langle v\rangle^{3/2}f}_2}_3
\\&\lesssim \sqrt{\mathcal{ {E}}_{2;2,0}(f)}\left(\norm{\nabla_xf}_\sigma^2+\norm{\nabla_x\phi }_2^2\right).
\end{split}
\end{equation}
Here we have used the fact that $\norm{\langle v\rangle^{3/2}\na^\ell f}_2\lesssim \sqrt{\mathcal{ {E}}_{2;2,0}(f)}$ for $\ell=0, 1$ by \eqref{energy}. Then \eqref{energy 1 1} follows by further using Lemma \ref{linear c} since $\mathcal{ {E}}_{2;2,0}(f)\lesssim M$ is small.

Next letting $k\ge 1$, we prove \eqref{energy 1 2} and \eqref{energy 1 3}. Applying $\na^k$ to \eqref{VPL_per} and then taking the $L^2$ inner product of the resulting identity with $e^{\pm2\phi }\na^k f_{\pm } $, we obtain
\begin{equation}
\begin{split}
& \frac{d}{dt}\left[\int \sum_{\pm }\frac{e^{\pm 2\phi }|\na^k f_{\pm }|^{2}}{2} +\int | \na^k \nabla_x\phi |^{2}\right]
+\int \langle L \na^k f,\na^k f\rangle
 \\&\quad=\sum_{\pm }\int e^{\pm 2\phi }\pa_t\phi|\na^k f_{\pm }|^{2}
+\sum_{\pm }\int \mp(e^{\pm 2\phi }-1)\na^k f_{\pm } \na^k\nabla _{x}\phi \cdot v%
\sqrt{\mu }  \\
&\qquad+\sum_{\pm }\int (1-e^{\pm 2\phi })\na^k f_{\pm }L_{\pm
}\na^k f  +\sum_{\pm }\int e^{\pm 2\phi }\na^k f_\pm\na^k \Gamma _{\pm }(f,f)   \\
&\qquad+\sum_{\pm ,j<k }C_k^j\int e^{\pm 2\phi
}\na^k f_{\pm }\na^{k-j}\nabla _{x}\phi
\cdot \na^j \left(\nabla _{v}f_{\pm }-vf_{\pm }\right):=\sum_{i=1}^5 I_i.
\end{split}
\end{equation}
Note that the weight function $e^{\pm2\phi}$ is so designed such that there was an exact cancelation for the high momentum contributions in the integration, see \cite{G12}.

We now estimate $I_1\sim I_5$. Since $|e^{\pm 2\phi
}-1|\lesssim \norm{\phi }_{\infty }\lesssim \sqrt{\mathcal{ {E}}_{2;2,0}(f)}\lesssim
\sqrt{M}$ by the elliptic estimate, clearly,
\begin{equation}\label{I1b}
I_1 \lesssim \norm{\pa_t\phi}_\infty\norm{\na^k f}_2^{2},
\end{equation}
and by using the exponential decay of $\mu$, we obtain
 \begin{equation}\label{I2b}
I_2  \lesssim  \sqrt{M}\left( \norm{\na^k f}_{\sigma }^2+\norm{\na^k \na_x\phi}^2_2\right)
\lesssim \sqrt{M}\left(\norm{\na^k f}_{\sigma }^2+\norm{\na_x  \phi}^2_2\right).
\end{equation}
By Lemma \ref{linear c weight}, we have
\begin{equation}\label{I3b}
I_3\lesssim \sqrt{M}\norm{\na^k f}_{\sigma }^{2}.
\end{equation}

Now for the term $I_4$, we apply the estimate \eqref{ga es 0} of Lemma \ref{nonlinear c} to obtain
\begin{equation}\label{I40}
I_4\le C \sum_{j\le k}C_k^j\int\norms{\na^j f}_2 \norms{\na^{k-j} f}_\sigma\norms{\na^k f}_\sigma dx.
\end{equation}
For $k=1$, if $j=0$ we take $L^\infty-L^2-L^2$; if $j=1$ we take $L^3-L^6-L^2$ in \eqref{I40} respectively to have an upper bound of
\begin{equation}
C\sqrt{\mathcal{ {E}}_{2;2,0}(f)}\norm{\na f}_{\sigma }^{2}.
 \end{equation}
For $k=2$, if $j=0$ we take $L^\infty-L^2-L^2$; if $j=1$ we take $L^3-L^6-L^2$; if $j=2$ we take $L^2-L^\infty-L^2$ in \eqref{I40} respectively to have an upper bound of
\begin{equation}
C\sqrt{\mathcal{ {E}}_{2;2,0}(f)}\sum_{\ell=1, 2}\norm{\na^\ell f}_{\sigma }^{2}.
 \end{equation}
For $k=m\ge 3$, if $j=0$ we take $L^\infty-L^2-L^2$; if $j=m$ we take $L^2-L^\infty-L^2$ in \eqref{I40} respectively (note that now $C_m^m=1$) to have an upper bound of
\begin{equation}
\begin{split}
&C\sqrt{\mathcal{ {E}}_{2;2,0}(f)}\norm{\na^m f}_{\sigma }^{2}+C\norm{\na^m  f}_2 \sum_{\ell=1,2}\norm{\na^\ell f}_{\sigma }\norm{\na^m f}_\sigma \\&\quad\lesssim \left(\sqrt{\mathcal{ {E}}_{2;2,0}(f)}+\eta\right)\norm{\na^m f}_\sigma^2+  C_\eta \mathcal{ E}_{m;m,0}(f)\widetilde{\mathcal{D}}_{2;2,0}(f).
\end{split}
 \end{equation}
Now if $j=1$ we take $L^\infty-L^2-L^2$; if $2\le j\le m-1$ we take $L^4-L^4-L^2$ in \eqref{I40} respectively to bound them by
\begin{equation}
\begin{split}
&C_m \sum_{\ell=1,2}\norm{\na^\ell \na f}_2 \norm{\na^{m-1} f}_{\sigma }\norm{\na^m f}_\sigma+C_m\sum_{\ell=0, 1\atop 2\le j\le m-1}\norm{\na^\ell \na^j f}_2 \norm{\na^\ell\na^{m-j}  f}_\sigma \norm{\na^m f}_\sigma
\\&\quad\le C_m\sqrt{\mathcal{ E}_{m;m,0}(f)}\sqrt{\widetilde{\mathcal{ D}}_{m-1;m-1,0}(f)}\norm{\na^m f}_\sigma
\\&\quad\le \eta\norm{\na^{m} f}_{\sigma }^2+C_{m,\eta}\mathcal{ E}_{m;m,0}(f)\widetilde{\mathcal{D}}_{m-1;m-1,0}(f).
 \end{split}
 \end{equation}
We thus conclude the estimates of $I_4$. Note that the difference between our estimates of $I_4$ and that of Lemma 8 in \cite{G12} is that we excluded the term $\norm{f}_\sigma^2$ therein.

Finally, we turn to the term $I_5$ and we perform an integration by parts in $v$ and recall \eqref{sigma norm =} to have
\begin{equation} \label{I50}
\begin{split}
I_5&\lesssim \sum_{j<k } C_k^j\int\left|\na^j f  \na^{k-j}\nabla _{x}\phi \cdot \na^k\left(\nabla _{v}f+vf\right) \right|
\\&\lesssim \sum_{j<k } C_k^j\int \norms{\langle v\rangle ^{3/2}\na^j f}_{2}|\na^{k-j}\nabla_{x}\phi|\norms{\na^k f}_{\sigma }dx.
\end{split}
\end{equation}
For $k=1$ then $j=0$, we take $L^3-L^6-L^2$ in \eqref{I50} to have an upper bound of
\begin{equation}
 \sum_{\ell=0, 1}\norm{\langle v\rangle ^{3/2}\na^\ell f }_{2}\norm{\na^2 \nabla _{x} \phi }_{2}\norm{ \na  f}_{\sigma }
\lesssim\sqrt{\mathcal{ {E}}_{2;2,0}(f)}\norm{\na  f}_\sigma^2.
 \end{equation}
For $k=2$, if $j=0$ we take $L^3-L^6-L^2$; if $j=1$ we take $L^2-L^\infty-L^2$ in \eqref{I50} respectively to have an upper bound of
\begin{equation}
\begin{split}
&\sum_{\ell=0, 1}\norm{\langle v\rangle ^{3/2}\na^\ell f }_{2}\norm{\na^3 \nabla _{x} \phi }_{2}\norm{ \na^2 f}_{\sigma }
+\norm{\langle v\rangle ^{3/2}\na  f }_{2}\sum_{\ell=1,2}\norm{\na^{\ell+1}  \nabla _{x} \phi }_{2}\norm{ \na^2 f}_{\sigma }
\\& \quad\lesssim\sqrt{\mathcal{ {E}}_{2;2,0}(f)}\left(\norm{\na^2 f}_{\sigma }^2+\norm{\na_x  \phi}_2^2\right).
\end{split}
 \end{equation}
For $k=m\ge 3$, if $j=0$ we take $L^3-L^6-L^2$  in \eqref{I40} to have an upper bound of
\begin{equation}
 \sum_{\ell=0, 1}\norm{\langle v\rangle ^{3/2}\na^\ell f }_{2}\norm{\na^{m+1} \nabla _{x} \phi }_{2}\norm{ \na^m  f}_{\sigma }
\lesssim\sqrt{\mathcal{ {E}}_{2;2,0}(f)}\norm{\na^m  f}_\sigma^2.
 \end{equation}
Now if $j=m-1$ we take $L^2-L^\infty-L^2$; if $1\le j\le m-2$ we take $L^4-L^4-L^2$ in \eqref{I50} respectively to bound them by
\begin{equation}
\begin{split}
&C_m \norm{\langle v\rangle^{3/2}\na^{m-1}  f }_{2} \sum_{\ell=1, 2}\norm{\na^{\ell+1}  \nabla _{x}\phi }_{2}\norm{ \na^m f}_{\sigma}
\\&\quad+C_m
\sum_{\ell=0,1\atop 1\le j\le m-1}\norm{\langle v\rangle ^{3/2}\na^\ell \na^j f }_{2}\norm{\na^\ell \na^{m-j} \nabla _{x}\phi }_{2} \norm{ \na^m f}_{\sigma}
\\&\qquad\le C_m\sqrt{\mathcal{ E}_{m;m,0}(f)}\sqrt{\widetilde{\mathcal{ {D}}}_{m-1;m-1,0}(f)}\norm{\na^m f}_\sigma
\\&\qquad\le \eta\norm{\na^m f}_\sigma^2+ C_{m,\eta} \mathcal{E}_{m;m,0}(f)\widetilde{\mathcal{D}}_{m-1;m-1,0}(f).
\end{split}
 \end{equation}
Here we have use the fact that $\norm{\langle v\rangle ^{3/2}\na^\ell f }_{2} \le \sqrt{\mathcal{ E}_{m;m,0}(f)}$ for $0\le\ell\le m-1$. We therefore conclude the estimates of $I_5$. Note that the difference between our estimates of $I_5$ and that of Lemma 10 in \cite{G12} is that we had to replace $\norm{\na^k f}_\sigma^2$ therein by $\norm{\na^k f}_\sigma^2+\norm{\na_x\phi}_2^2$ since the Poincar\'e inequality fails.

Consequently, summing up the estimates for $I_1\sim I_5$, by Lemma \ref{linear c}, we obtain \eqref{energy 1 2} and \eqref{energy 1 3}. We thus conclude the lemma.
\end{proof}

Notice that the dissipation estimate in Lemma \ref{energy lemma 1} only controls the microscopic part $\{\FI-\FP\}f$, we then want to include the hydrodynamic part $\FP f$ and the electric potential $\phi$ to get the full dissipation estimate.
\begin{lemma}\label{inter lemma}
Let $f_{0}\in \testF$ and assume $f$ is the solution constructed in Theorem \ref{local solution} with
$\mathcal{{E}}_{2;2,0}(f)\leq M$. For $k\ge 0$, there exists a function $G^k(t)$ with
\begin{equation}\label{Gk}
G^k(t)\lesssim  \norm{\na^k   f}_{ 2}^2+\norm{ \na^{k+1} f}_{ 2}^2+\norm{ \na^{k} \na_x\phi}_{ 2}^2
\end{equation}
such that if $k=0,1$, then
\begin{equation}\label{Gk inequality}
\begin{split}
&\frac{d}{dt}G^k(t)+ \norm{ \na^{k+1}{\bf P} f}_{2}^2+\norm{ \na^k \nabla_x\phi}_{2}^2
\\&\quad\lesssim  \norm{\na^k \{{\bf I- P}\} f}_{\sigma}^2+\norm{\na^{k+1} \{{\bf I- P}\} f}_{\sigma}^2 +M \norm{\na_x f}_{\sigma}^2;
\end{split}
\end{equation}
if $ k=m-1$ with $m\ge 3$, then
\begin{equation}
\begin{split}
&\frac{d}{dt}G^k(t)+ \norm{ \na^{k+1}{\bf P} f}_{2}^2+\norm{ \na^k \nabla_x\phi}_{2}^2
\\&\quad\lesssim  \norm{\na^k \{{\bf I- P}\} f}_{\sigma}^2+\norm{\na^{k+1} \{{\bf I- P}\} f}_{\sigma}^2 +C_m\mathcal{E}_{m-1;m-1,0}(f ) \widetilde{\mathcal{D}}_{m-1;m-1,0}(f ).
\end{split}
\end{equation}
\end{lemma}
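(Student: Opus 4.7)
The plan is to adapt the macroscopic dissipation method of Guo \cite{G12} to the whole space, extracting the required $\norm{\nabla^k \nabla_x\phi}_2^2$ piece directly from the electric source in the $(f_+-f_-)$--momentum equation rather than from a Poincar\'e inequality (which fails in $\mathbb{R}^3$).

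First I would decompose the hydrodynamic part in accordance with \eqref{null L}:
\[
{\bf P}f=\begin{pmatrix}\{a_++b\cdot v+c|v|^2\}\sqrt{\mu}\\\{a_-+b\cdot v+c|v|^2\}\sqrt{\mu}\end{pmatrix},
\]
so $a_\pm(t,x)$ are the two charge densities while the momentum $b(t,x)$ and thermal energy $c(t,x)$ are shared between the species. Testing \eqref{VPL_per} against $\sqrt{\mu}$, $v_i\sqrt{\mu}$, $|v|^2\sqrt{\mu}$, the traceless tensor $(v_iv_j-\tfrac13|v|^2\delta_{ij})\sqrt{\mu}$ and $v_i|v|^2\sqrt{\mu}$ produces local balance laws for $a_\pm,b,c$ coupling to the microscopic remainder $\{{\bf I-P}\}f$, the Vlasov nonlinearity, and the field $\nabla_x\phi$ (entering the momentum equation with opposite signs for $\pm$). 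Testing instead against $v_i\sqrt{\mu}\binom{1}{-1}$ (which is \emph{not} in $N(L)$) yields an evolution equation for the charge current $j:=\int v\sqrt{\mu}(f_+-f_-)\,dv$ forced by $\nabla_x\phi$. The Poisson equation provides the crucial identity $-\Delta_x\phi=c_0(a_+-a_-)$ for an explicit $c_0>0$.

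Next, following \cite{G12}, I would introduce an interaction functional of the form
\[
G^k(t)=\sum_{i,j}\int\nabla^k\partial_i\Theta_{ij}(\{{\bf I-P}\}f)\,\nabla^k b_j\,dx+\kappa_1\int\nabla^k b\cdot\nabla^{k+1}(a_++a_-)\,dx+\kappa_2\int\nabla^k\nabla_x\phi\cdot\nabla^k j\,dx,
\]
together with an analogous $c$--$b$ pairing, where $\Theta_{ij}$ is a bounded velocity moment of $\{{\bf I-P}\}f$ and $\kappa_1,\kappa_2>0$ are small constants. The bound \eqref{Gk} follows from Cauchy--Schwarz. Differentiating $G^k$ in time, substituting the balance laws and integrating by parts, I expect the principal terms to yield $\lambda\bigl(\norm{\nabla^{k+1}{\bf P}f}_2^2+\norm{\nabla^k\nabla_x\phi}_2^2\bigr)$; the field dissipation arises precisely because $\partial_t(\int\nabla^k\nabla_x\phi\cdot\nabla^k j)$, after using the continuity equation for $a_+-a_-$ and $-\Delta_x\phi=c_0(a_+-a_-)$, generates $c_0\norm{\nabla^k\nabla_x\phi}_2^2$. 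Residual cross terms involving $\{{\bf I-P}\}f$ are absorbed by $\norm{\nabla^k\{{\bf I-P}\}f}_\sigma^2+\norm{\nabla^{k+1}\{{\bf I-P}\}f}_\sigma^2$ via Cauchy's inequality with a small parameter.

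The nonlinear remainders from $\widetilde{\Gamma}$ and from the Vlasov quadratic terms $\nabla_x\phi\cdot(\nabla_v f_\pm\mp v f_\pm)$ will be controlled by Lemma \ref{nonlinear c} (in particular \eqref{ga es 0}), the Sobolev interpolation of Lemma \ref{interpolation lemma}, and the a priori smallness $\mathcal{E}_{2;2,0}(f)\le M$; for $k=0,1$ they absorb into $M\norm{\nabla_x f}_\sigma^2$, while for $k=m-1$ with $m\ge 3$ a Gagliardo--Nirenberg splitting organizes them into the required $\mathcal{E}_{m-1;m-1,0}(f)\,\widetilde{\mathcal{D}}_{m-1;m-1,0}(f)$ structure. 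The main obstacle I foresee is the calibration of $\kappa_2$: the $j$-balance generates unavoidable cross-terms of the form $\langle\nabla^{k+1}{\bf P}f,\nabla^k\nabla_x\phi\rangle$ which must be dominated by enforcing a hierarchy $\kappa_2\ll\kappa_1\ll 1$ and borrowing a small fraction of the macroscopic dissipation, the structural reason being exactly the Poisson cancellation $-\Delta_x\phi=c_0(a_+-a_-)$ together with the $(f_+-f_-)$ continuity equation.
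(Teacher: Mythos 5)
Your strategy is correct in outline but takes a noticeably longer route than the paper. The paper does not re-derive the macroscopic dissipation estimate at all: it quotes Proposition 16 of Guo \cite{G12}, which already supplies the interaction functional $G^k$ and the linear estimate
\[
\frac{d}{dt}G^k(t)+\norm{\nabla^{k+1}{\bf P}f}_2^2+\norm{\nabla^k\nabla_x\phi}_2^2\lesssim\norm{\nabla^k\{{\bf I-P}\}f}_\sigma^2+\norm{\nabla^{k+1}\{{\bf I-P}\}f}_\sigma^2+\norm{\nabla^k N_\parallel}_2^2,
\]
where $N_\parallel$ is the $L^2_v$--projection of the nonlinear source $\Gamma_\pm(f,f)\pm\nabla_x\phi\cdot(\nabla_vf_\pm-vf_\pm)$ onto the finite-dimensional moment space $X_v=\mathrm{span}\{\sqrt{\mu},v_i\sqrt{\mu},v_iv_j\sqrt{\mu},v_i|v|^2\sqrt{\mu}\}$. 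The only new work in the paper is the estimation of $\norm{\nabla^k N_\parallel}_2^2$: one applies \eqref{ga es 0} to the collision term and integrates by parts in $v$ for the Vlasov term, then distributes $L^3$--$L^6$ (resp.\ $L^\infty$--$L^2$, $L^2$--$L^\infty$) H\"older splits in $x$ over the Leibniz indices to land on $\mathcal{E}_{2;2,0}\norm{\nabla_x f}_\sigma^2$ when $k=0,1$ and on $\mathcal{E}_{m-1;m-1,0}\widetilde{\mathcal{D}}_{m-1;m-1,0}$ when $k=m-1\geq2$. The parenthetical remark ``$\norm{\nabla^k\nabla_x\phi}_2^2$ is included since Poincar\'e fails'' refers to the \emph{bound} \eqref{Gk} on $G^k$, not to an extra derivation step.

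Your proposal re-derives the quoted linear estimate from scratch via the macro--micro balance laws. This is a genuinely different (more self-contained, more laborious) presentation, and the essential observations in it are correct: you correctly identify that the $b$-- and $c$--equations are free of a linear $\nabla_x\phi$ source (the $\pm2\nabla_x\phi\cdot v\sqrt{\mu}$ contributions cancel when summed), while the charge--current $j=\int v\sqrt{\mu}(f_+-f_-)dv$--balance carries a linear $\nabla_x\phi$ forcing, and that pairing $\kappa_2\int\nabla^k\nabla_x\phi\cdot\nabla^k j\,dx$ together with $-\Delta_x\phi=c_0(a_+-a_-)$ and the charge continuity equation is exactly the mechanism that produces the $\norm{\nabla^k\nabla_x\phi}_2^2$ dissipation (which cannot be recovered from $\norm{\nabla^{k+1}{\bf P}f}_2^2$ in $\mathbb{R}^3$, since $-\Delta_x\phi=c_0(a_+-a_-)$ only controls $\nabla^{k+3}\phi$). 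You correctly observe that $\{{\bf I-P_2}\}f_2=\{{\bf I-P}\}(f_+-f_-)$, so the collisional contribution to the $j$-balance is purely microscopic and is absorbed by $\norm{\nabla^k\{{\bf I-P}\}f}_\sigma^2$. What your sketch leaves genuinely unfinished is precisely the part the paper actually has to supply: the explicit H\"older/Leibniz book-keeping on $\norm{\nabla^k N_\parallel}_2^2$ that lands on $M\norm{\nabla_x f}_\sigma^2$ for $k\leq1$ and on the $\mathcal{E}_{m-1;m-1,0}\widetilde{\mathcal{D}}_{m-1;m-1,0}$ structure for $k=m-1$. If you were to write this out in the same detail as the paper's treatment of \eqref{120} and \eqref{121}, your version would be a valid, more self-contained proof.
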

\begin{proof}
From Proposition 16 of Guo \cite{G12} by using the local conservation laws and the macroscopic equations which are derived from the the so-called macro-micro decomposition, we have that for any $k\ge 0$, there exists a function $G^k(t)$ satisfying \eqref{Gk} ($\|\na^{k} \na_x\phi\|_{ 2}^2$ is included since the Poincar\'e inequality fails) such that
\begin{equation}
\begin{split}
&\frac{d}{dt}G^k(t)+ \norm{ \na^{k+1}{\bf P} f}_{2}^2+\norm{ \na^k \nabla_x\phi}_{2}^2
\\&\quad\lesssim  \norm{\na^k \{{\bf I- P}\} f}_{\sigma}^2+\norm{\na^{k+1} \{{\bf I- P}\} f}_{\sigma}^2 +\norm{\na^k N_\parallel }_{2}^2.
\end{split}
\end{equation}
Here $N_\parallel$ denotes the $L^2_v$ projection of $N_\pm(f)$ with respect to the subspace
\begin{equation}
X_v\equiv \mathrm{span}\left\{\sqrt{\mu}, v_i\sqrt{\mu},v_iv_j\sqrt{\mu},v_i|v|^2\sqrt{\mu}\right\},
\end{equation}
where $N_\pm(f) \equiv\Gamma_\pm(f,f)\pm\nabla_x\phi\cdot(\nabla_vf_\pm- vf_\pm)$ representing the nonlinear term of \eqref{VPL_per}. It then suffices to estimate $\norm{\na^k N_\parallel }_{2}^2$. Let $\tilde{\mu}$ be any function in $X_v$, we have
\begin{equation}
\norm{\na^k N_\parallel }_{2}^2\lesssim \sum_{j \le k}C_k^j\norm{\Llangle \Gamma(\na^j f,\na^{k-j} f)+\na^j\nabla_x\phi\cdot   \na^{k-j}(\nabla_v f-vf), \tilde{\mu}\Rrangle}_{2}^2.
\end{equation}

We apply the estimate \eqref{ga es 0} of Lemma \ref{nonlinear c} to obtain
\begin{equation}\label{120}
\norm{\Llangle \Gamma(\na^j f,\na^{k-j} f), \tilde{\mu}\Rrangle}_{2}^2\lesssim
\norm{\norms{\na^j f}_2 \norms{\na^{k-j}f}_\sigma}_2^2.
\end{equation}
 When $k=0,1$, if $j=k$ we take $L^3-L^6$; and if $k-j=1$ we take $L^\infty-L^2$ in \eqref{120} respectively to bound these two cases by
\begin{equation}
\sum_{0\le \ell\le 2}\norm{\na^\ell f}_2^2 \norm{\na_x f}_\sigma^2\lesssim  {\mathcal{ {E}}_{2;2,0}(f)}\norm{\na_x f}_\sigma^2.
\end{equation}
When $k=m-1$ with $m\ge 3$, if $j=m-1$ we take $L^2-L^\infty$; if $j=0$ we take $L^\infty-L^2$; if $1\le j\le m-2$ we take $L^3-L^6$ in \eqref{120} respectively to bound them by
\begin{equation}
\mathcal{E}_{m-1;m-1,0}(f ) \widetilde{\mathcal{D}}_{m-1;m-1,0}(f ).
\end{equation}

Now, we use the integration by parts in $v$ and the exponential decay of $\tilde{\mu}$ to get
\begin{equation}\label{121}
\begin{split}
 \norm{\na^j\nabla_x\phi\cdot\langle  \na^{k-j}(\nabla_v f-vf), \tilde{\mu}\rangle}_{2}^2
  &= \norm{\na^j\nabla_x\phi\cdot\langle  \na^{k-j}  f , \nabla_v\tilde{\mu}+v\tilde{\mu}\rangle}_{2}^2
  \\&\lesssim \norm{\norms{\na^j\nabla_x\phi}\norms{ \na^{k-j}  f}_{\sigma}}_2^2.
  \end{split}
\end{equation}
Obviously, applying the same arguments as those for \eqref{120}, we obtain the same upper bounds for \eqref{121}. We then conclude our lemma.
\end{proof}

Next, we turn to the energy estimates with the velocity weight, and we first deal with the pure spatial derivatives of the solution.
\begin{lemma}\label{energy lemma 2}
Let $f_{0}\in \testF$ and assume $f$ is the solution constructed in Theorem \ref{local solution} with
$\mathcal{{E}}_{2;2,0}(f)\leq M$. Let $w=w(\al,0)$ in \eqref{weight}. Then for $|\alpha |=1,2$,
\begin{equation}
\begin{split}
&\frac{d}{dt} \int \sum_{\pm }\frac{e^{\pm 2(q+1)\phi }w^2|\partial ^{\alpha
}f_{\pm }|^{2}}{2} +\lambda\norm{\partial^\al f}^2_{\si,w}  \\
&\quad\le  C_l \left[\norm{\pa_t\phi}_{\infty}+\norm{\na_x \phi }_{\infty}\right]\norm{\partial ^{\alpha }f }_{2,w}^{2}+C\left(\sqrt{M}+\eta\right)\widetilde{\mathcal{D}}_{2;l,q}(f)
\\&\qquad+C_{l,\eta}\left(\norm{\partial^\al f}_\si^2+ \norm{\na_x \phi}^2_2+\mathcal{E}_{2;l,q}(f ) \widetilde{\mathcal{D}}_{2;2,0}(f )\right).
\end{split}
\end{equation}
For $|\alpha|=m\ge 3$,
\begin{equation}
\begin{split}
&\frac{d}{dt} \int \sum_{\pm }\frac{e^{\pm 2(q+1)\phi }w^2|\partial ^{\alpha
}f_{\pm }|^{2}}{2} +\lambda\norm{\partial^\al f}^2_{\si,w}  \\
&\quad\le   C_{l,m}\left[\norm{\pa_t\phi}_{\infty}+\norm{\na_x \phi }_{\infty}\right]\norm{\partial ^{\alpha }f }_{2,w}^{2}+C\left(\sqrt{M}+\eta\right)\widetilde{\mathcal{D}}_{m;l,q}(f)
\\&\qquad+ C_{l,m,\eta}\left(\norm{\partial^\al f}_\si^2+ \norm{\na_x \phi}^2_2
+\mathcal{E}_{m;l,q}(f ) \widetilde{\mathcal{D}}_{m-1;l,q}(f )\right).
\end{split}
\end{equation}
\end{lemma}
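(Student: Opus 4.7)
I follow the strategy of Lemma~\ref{energy lemma 1} but incorporate the velocity weight $w = w(\alpha,0)$ and Guo's exponential $\phi$-weight. Apply $\pa^\al$ to the system \eqref{VPL_per}, take the $L^2_{x,v}$ inner product of the $\pm$-equation with $e^{\pm 2(q+1)\phi}w^2\pa^\al f_\pm$, and sum over $\pm$. This produces
\begin{equation*}
\frac{d}{dt}\!\int\sum_\pm\frac{e^{\pm 2(q+1)\phi}w^2|\pa^\al f_\pm|^2}{2}
+\sum_\pm\Llangle e^{\pm 2(q+1)\phi}w^2\pa^\al L_\pm f,\,\pa^\al f_\pm\Rrangle
= R_1+R_2+R_3+R_4,
\end{equation*}
where $R_1$ collects the contributions from $\pa_t$ and $v\!\cdot\!\na_x$ acting on $e^{\pm 2(q+1)\phi}$, $R_2$ comes from the Poisson coupling $\pm 2\pa^\al(\na_x\phi\!\cdot\! v\sqrt{\mu})$, $R_3$ from the nonlinear collision $\pa^\al\Gamma_\pm(f,f)$, and $R_4$ from the full derivative expansion of $\mp\pa^\al(\na_x\phi\!\cdot\!(\na_v f_\pm - v f_\pm))$, including the top-order piece in which $\pa^\al$ falls entirely on $f_\pm$.

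\textbf{Weight cancellation and linear collision.} The key design of the exponential weight $e^{\pm 2(q+1)\phi}$, exactly as in Guo~\cite{G12}, is that the cross term from $v\cdot\na_x$ hitting $e^{\pm 2(q+1)\phi}$ exactly matches the top-order piece of $R_4$ (the $\mp v\cdot\na_x\phi|\pa^\al f_\pm|^2$ term) up to a $q$-multiple; the residue is bounded by $C_l\,\|\na_x\phi\|_\infty\|\pa^\al f\|_{2,w}^2$, contributing to the first term on the right-hand side of the claimed inequality. The $R_1$ piece from $\pa_t$ on $e^{\pm 2(q+1)\phi}$ gives $C_l\,\|\pa_t\phi\|_\infty\|\pa^\al f\|_{2,w}^2$, again of the stated form. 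For the linear collision, write $e^{\pm 2(q+1)\phi} = 1 + (e^{\pm 2(q+1)\phi}-1)$; the ``$1$"-part is controlled from below by Lemma~\ref{linear c weight} giving $\lambda\|\pa^\al f\|_{\sigma,w}^2 - C_l\|\pa^\al f\|_\sigma^2$, while the remainder is $O(\sqrt{M})\|\pa^\al f\|_{\sigma,w}^2$ since $|\phi|\lesssim\sqrt{M}$ by elliptic estimate.

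\textbf{The main obstacle: nonlinear collision $R_3$.} Applying Lemma~\ref{nonlinear c} with $\beta=\beta_1=\bar\beta=0$,
\begin{equation*}
|R_3|\lesssim\sum_{\alpha_1\le\alpha}C_\alpha^{\alpha_1}\!\int\!|\mu^\delta\pa^{\alpha_1}f|_2\,|\pa^{\alpha-\alpha_1}f|_{\sigma,w}
\bigl(|\pa^\al f|_{\sigma,w}+l\,|\pa^\al f|_{2,w/\langle v\rangle^{3/2}}\bigr)\,dx.
\end{equation*}
For $|\alpha|=1,2$ I distribute in $x$ by $L^\infty$-$L^2$-$L^2$ when $\alpha_1=0$, $L^3$-$L^6$-$L^2$ when $\alpha_1=\alpha$, and $L^2$-$L^\infty$-$L^2$ for $|\alpha|=2,\alpha_1=\alpha$, exactly as in the analysis of $I_4$ in Lemma~\ref{energy lemma 1}; the outputs $|\pa^{\alpha_1}f|_2$ at low order are absorbed by the smallness $\sqrt{M}$ of $\mathcal{E}_{2;l,q}$ or by $C_{l,\eta}\mathcal{E}_{2;l,q}\widetilde{\mathcal{D}}_{2;2,0}$. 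For $|\alpha|=m\ge3$, middle-order distributions $1\le|\alpha_1|\le m-1$ force the $L^4$-$L^4$-$L^2$ Sobolev interpolation, yielding $C_m\sqrt{\mathcal{E}_{m;l,q}\widetilde{\mathcal{D}}_{m-1;l,q}}\,\|\pa^\al f\|_{\sigma,w}$, absorbed by $\eta\widetilde{\mathcal{D}}_{m;l,q}+C_{m,l,\eta}\mathcal{E}_{m;l,q}\widetilde{\mathcal{D}}_{m-1;l,q}$ via Cauchy--Schwarz. The crucial improvement of Lemma~\ref{nonlinear c} is that the $l$-factor term pairs with $|\cdot|_{2,w/\langle v\rangle^{3/2}}$, which by \eqref{sigma norm =} is dominated by $|\cdot|_{\sigma,w}$, so the large $l$ is harmlessly absorbed into $C_{l,\eta}$ on the right-hand side.

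\textbf{Poisson coupling and commutator terms.} For $R_2$, pairing with $e^{\pm 2(q+1)\phi}w^2\pa^\al f_\pm$ and exploiting the exponential decay of $v\sqrt{\mu}$ against $w^2 = e^{q|v|^2}\langle v\rangle^{2(l-|\alpha|)}$ with $q\ll 1$, Cauchy--Schwarz gives $\|\pa^\al\na_x\phi\|_2\|\pa^\al f\|_\sigma$, which is bounded by $\eta\widetilde{\mathcal{D}}_{m;l,q} + C_\eta\|\pa^\al f\|_\sigma^2$ since $\|\pa^\al\na_x\phi\|_2^2$ for $|\alpha|\ge 1$ is controlled by $\|\na_x\phi\|_2^2 + \|\pa^{\alpha-1}(f_+ - f_-)\|_{2,\mu^\delta}^2 \subset \widetilde{\mathcal{D}}_{m;l,q}$ through the Poisson equation. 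The lower-order pieces of $R_4$ (with $\alpha_1 < \alpha$), after integration by parts in $v$ (which exposes $\na_v w^2$, generating a factor $l$ or $q|v|$ absorbed into $C_{l,\eta}$), are estimated by exactly the $L^p$ interpolations used for $I_5$ in Lemma~\ref{energy lemma 1}, now in the weighted norm. Summing all contributions yields the two asserted inequalities.
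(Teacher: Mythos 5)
Your overall strategy—applying $\pa^\al$, pairing against $e^{\pm 2(q+1)\phi}w^2\pa^\al f_\pm$, using Lemma~\ref{linear c weight} for the linear part, and reducing the nonlinear collision to Lemma~\ref{nonlinear c}—matches the paper, and the treatments of $R_1,R_2$, the $e^{\pm 2(q+1)\phi}$ cancellation, and the linear collision are fine. However, there is a genuine gap in how you handle the factor-$l$ term arising from Lemma~\ref{nonlinear c}.

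You claim to bound $|\pa^\al f|_{2,w/\langle v\rangle^{3/2}}$ by $|\pa^\al f|_{\sigma,w}$ (dissipation) and assert that "the large $l$ is harmlessly absorbed into $C_{l,\eta}$." This fails. With that choice, the $l$-term is
\[
l\,\norms{\mu^\delta\pa^{\alpha_1}f}_{2}\,\norms{\pa^{\al-\al_1}f}_{\sigma,w}\,\norms{\pa^{\al}f}_{\sigma,w},
\]
and after distributing in $x$ you land on a product of the form $l\cdot\sqrt{\widetilde{\mathcal D}_{*;2,0}}\cdot\widetilde{\mathcal D}_{*;l,q}$ or $l\sqrt{M}\cdot\widetilde{\mathcal D}_{*;l,q}$. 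Neither can be absorbed: $\widetilde{\mathcal D}$ is not pointwise small (only its time integral is), and—crucially—the smallness parameter $M$ in Theorem~\ref{local solution} and Theorem~\ref{main theorem} must be chosen \emph{independently} of $l$, so $l\sqrt{M}$ is not small. There is no way to push the loose $l$ into a $C_{l,\eta}$ constant because all three factors are then dissipation-type, and the lemma requires the form $C_{l,\eta}\,\mathcal{E}_{m;l,q}\widetilde{\mathcal D}_{m-1;l,q}$ (energy times lower-weight dissipation), which is exactly what feeds Gronwall with an $l$-independent $M$. The paper makes the opposite choice: it bounds $|\pa^\al f|_{2,w/\langle v\rangle^{3/2}}\le|\pa^\al f|_{2,w}$ (energy) and simultaneously bounds $|\mu^\delta\pa^{\al_1}f|_2\le|\pa^{\al_1}f|_\sigma$ (dissipation), giving $l\sqrt{\widetilde{\mathcal D}_{\text{low wt}}}\cdot\sqrt{\widetilde{\mathcal D}_{l,q}}\cdot\sqrt{\mathcal E_{l,q}}$, and then Cauchy–Schwarz between the two $(l,q)$-weighted factors yields $\eta\widetilde{\mathcal D}_{l,q}+C_{l,\eta}\widetilde{\mathcal D}_{\text{low wt}}\mathcal E_{l,q}$. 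This is precisely the flexibility the paper advertises after Lemma~\ref{nonlinear c} and the reason the refined estimate was proved; your bound takes the wrong branch of that flexibility and the argument breaks there.

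A secondary issue: for the commutator $R_4$ you propose integrating by parts in $v$ to expose $\na_v w^2$, but the paper instead splits $f=\FP f+\{\FI-\FP\}f$, uses the exponential $v$-decay of $\FP f$ for the hydrodynamic piece, and invokes the weighted-microscopic estimate from Lemma~9 of~\cite{G12} for the other piece. Your approach may be repairable, but note that integrating by parts against the $v$-dependent weight again generates a loose factor $l$, and the same care about energy versus dissipation pairing is needed to absorb it.
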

\begin{proof}
Applying $\partial^\alpha$ with $|\alpha|=m\ge 1$ to \eqref{VPL_per} and then taking the $L^2$ inner product of the resulting identity with
$e^{\pm2(q+1)\phi }w^{2}\partial^{\alpha }f_{\pm } $, we obtain
\begin{equation}
\begin{split}
&\frac{d}{dt} \int \frac{e^{\pm 2(q+1)\phi  }w^{2}|\partial^{\alpha }f_{\pm }|^{2}}{2} +\int \left\langle
w^{2}L_\pm\partial^{\alpha } f ,\partial^{\alpha
}f_\pm \right\rangle    \\
& \quad=\mp \int \left[\frac{2(l-|\alpha |)}{1+|v|^{2}}\nabla _{x}\phi
 \cdot v-(q+1)\pa_t\phi \right]e^{\pm 2(q+1)\phi  }w^{2}|\partial^{\alpha }f_{\pm }|^{2}
 \\&\qquad\mp 2\int e^{\pm 2(q+1)\phi  }w^{2}\partial ^{\alpha }\nabla _{x}\phi
 \cdot v\sqrt{\mu }\partial^{\alpha }f_{\pm
}
\\
&\qquad+\int\left(1-e^{\pm 2(q+1)\phi  }\right)w^{2}\partial^{\alpha }f_{\pm
}L_\pm\partial^{\alpha } f+\int e^{\pm 2(q+1)\phi  } w^{2}\partial^{\alpha }f_{\pm }\partial^{\alpha }\Gamma _{\pm
}(f ,f )
\\&\qquad\pm \sum_{\alpha _{1}<\alpha }C_{\alpha }^{\alpha _{1}}\int e^{\pm
2(q+1)\phi  }w^{2}\partial^{\alpha }f_{\pm } \partial
^{\alpha -\alpha _{1}}\nabla _{x}\phi  \cdot \partial^{\alpha _{1}}\left(\nabla _{v}f_{\pm } -vf_{\pm } \right):=\sum_{i=1}^5I_i.
\end{split}
\end{equation}

First, by Lemma \ref{linear c weight}, we have
\begin{equation}
\int \Llangle w^{2}L\partial ^{\alpha }f,\partial
^{\alpha }f\Rrangle dx\gtrsim \norm{\partial ^{\alpha
}f}_{\sigma ,w}^{2}-C_{l,m}\norm{ \partial ^{\alpha
}f}_{\sigma}^{2}.
\end{equation}%

We now estimate $I_1\sim I_5$. Clearly,
\begin{equation}
I_1\le C_{l,m} \left[\norm{\pa_t\phi}_{\infty }+\norm{\nabla
_{x}\phi }_{\infty }\right]\norm{\partial^{\alpha }f
}_{2,w}^{2},
\end{equation}
and since $0\le q\ll 1$,
\begin{equation}
I_2 \le C_{l,m}\left( \norm{\partial ^{\alpha }\nabla _{x}\phi
 }_2^2+ \norm{\mu^\delta\partial^{\alpha }f}_2^2\right)
\le C_{l,m}\left(\norm{\nabla _{x}\phi
}_2^2+ \norm{\partial ^{\alpha }f }_\sigma^2\right).
\end{equation}
By Lemma \ref{linear c weight} again, we have
\begin{equation}
I_3 \lesssim \sqrt{M}\norm{\partial ^{\alpha
}f}_{\sigma ,w}^{2}+\sqrt{M}C_{l,m}\norm{ \partial ^{\alpha
}f}_{\sigma}^{2}.
\end{equation}

Now for the term $I_4$, we apply the estimate \eqref{ga es} of Lemma \ref{nonlinear c} to get
\begin{equation} \label{J40}
\begin{split}
&I_4\lesssim \sum_{\alpha_1 \le \alpha}C_{\al}^{\al_1} \norms{\partial^{\alpha -\alpha _1}f}_{\sigma ,w}\left\{
\norms{\partial^{\alpha _1}f}_{2}
\norms{\partial^\alpha f}_{\sigma ,w}
 +l \norms{\partial^{\alpha _1}f}_{\sigma}
\norms{\partial^\alpha f}_{2 ,w}\right\},
\end{split}
\end{equation}
where we have used $\norms{\mu^\delta\partial^{\alpha _1}f}_{2}\le \norms{\partial^{\alpha _1}f}_{2}$, $\norms{\mu^\delta\partial^{\alpha _1}f}_{2}\le \norms{\partial^{\alpha _1}f}_{\sigma}$ and $\norms{\partial^\alpha f}_{2 ,\frac{w}{\langle v\rangle^\frac{3}{2}}}\le \norms{\partial^\alpha f}_{2 ,w}$.

For $|\al|=1$, if $\al_1=0$ we take $L^2-L^\infty-L^2$; if $\al-\al_1=0$, we take $L^6-L^3-L^2$
 in \eqref{J40} to have an upper bound of
 \begin{equation}
\begin{split}
&\norm{\partial ^{\alpha } f}_{\sigma,w}\sum_{1\le |\gamma|\le 2}\left\{ \norm{\pa^\ga  f}_2\norm{\partial ^{\alpha } f}_{\sigma,w} +C_l \norm{\pa^\ga  f}_\sigma\norm{\partial ^{\alpha } f}_{2,w}\right\}
\\&\quad +\sum_{|\gamma|= 1} \norm{\pa^\ga f}_{\sigma,w}\sum_{|\gamma|\le 1} \left\{\norm{\pa^\ga \pa^\al f}_2\norm{\partial ^{\alpha } f}_{\sigma,w} +C_l \norm{\pa^\ga \pa^\al f}_\sigma\norm{\partial ^{\alpha } f}_{2,w}\right\}
\\&\qquad\lesssim  \sqrt{\widetilde{\mathcal{ D}}_{1;l,q}(f)}\sqrt{\mathcal{E}_{2;2,0}(f)}\sqrt{\widetilde{\mathcal{ D}}_{1;l,q}(f)}+  l\sqrt{\widetilde{\mathcal{ D}}_{1;l,q}(f)}\sqrt{\widetilde{\mathcal{ D}}_{2;2,0}(f)}\sqrt{\mathcal{ {E}}_{1;l,q}(f)}
\\&\qquad\lesssim \left(\sqrt{\mathcal{E}_{2;2,0}(f)}+\eta\right)\widetilde{\mathcal{ D}}_{2;l,q}(f)+  C_{l,\eta} \mathcal{ {E}}_{2;l,q}(f) \widetilde{\mathcal{ D}}_{2;2,0}(f).
\end{split}
 \end{equation}
Here we have used the fact $w(\alpha,0)= w(\gamma,0)$ for $|\gamma|=1$.

For $|\al|=2$, if $\al_1=0$ we take $L^2-L^\infty-L^2$; if $\al-\al_1=0$ we take $L^\infty-L^2-L^2$; if $|\al_1|=|\al-\al_1|=1$ we take $L^4-L^4-L^2$ in \eqref{J40} respectively to have an upper bound of
 \begin{equation}\label{te1}
\begin{split}
&\norm{\partial ^{\alpha } f}_{\sigma,w}\sum_{1\le |\gamma|\le 2}\left\{ \norm{\pa^\ga  f}_2\norm{\partial ^{\alpha } f}_{\sigma,w} +l \norm{\pa^\ga  f}_\sigma\norm{\partial ^{\alpha } f}_{2,w}\right\}
\\& \quad+\sum_{1\le |\gamma|\le 2} \norm{\pa^\ga f}_{\sigma,w} \left\{\norm{  \pa^\al f}_2\norm{\partial ^{\alpha } f}_{\sigma,w} +l \norm{  \pa^\al f}_\sigma\norm{\partial ^{\alpha } f}_{2,w}\right\}
\\& \quad+\sum_{|\gamma|\le 1\atop |\al_1|=1} \norm{\pa^\ga\pa^{\al-\al_1} f}_{\sigma,w}\left\{\norm{\pa^\ga \pa^{\al_1} f}_2\norm{\partial ^{\alpha } f}_{\sigma,w} +l\norm{\pa^\ga \pa^{\al_1} f}_\sigma\norm{\partial ^{\alpha } f}_{2,w}\right\}
\\&\qquad\lesssim  \sqrt{\widetilde{\mathcal{ D}}_{2;l,q}(f)}\sqrt{\mathcal{E}_{2;2,0}(f)}\sqrt{\widetilde{\mathcal{ D}}_{2;l,q}(f)}+  l\sqrt{\widetilde{\mathcal{ D}}_{2;l,q}(f)}\sqrt{\widetilde{\mathcal{ D}}_{2;2,0}(f)}\sqrt{\mathcal{ {E}}_{2;l,q}(f)}
\\&\qquad\lesssim \left(\sqrt{\mathcal{E}_{2;2,0}(f)}+\eta\right)\widetilde{\mathcal{ D}}_{2;l,q}(f)+  C_{l,\eta} \widetilde{\mathcal{ D}}_{2;2,0}(f)\mathcal{ {E}}_{2;l,q}(f).
\end{split}
 \end{equation}
 Here we have used the fact $w(\alpha,0)\le w(\gamma,0)$ for $|\gamma|\le 2$.

For $|\al|=m\ge 3$, note that if either $\al_1=0$ or $\al-\al_1=0$ we have $C_\al^{\al_1}=1$. Then as in \eqref{te1} we take $L^\infty$ of the term without derivatives and $L^2$ of the other two terms in \eqref{J40} respectively to bound these two cases by
\begin{equation}
\left(\sqrt{\mathcal{E}_{2;2,0}(f)}+\eta\right)\widetilde{\mathcal{ D}}_{m;l,q}(f)+  C_{l,m,\eta} \widetilde{\mathcal{ D}}_{2;2,0}(f)\mathcal{ {E}}_{m;l,q}(f).
 \end{equation}
If $|\al_1|=1$ we take $L^2-L^\infty-L^2$ in \eqref{J40} to have an upper bound of
 \begin{equation}
\begin{split}
 &C_{l,m}\norm{\pa^{\al-\al_1} f}_{\sigma,w}\sum_{1\le |\gamma|\le 2}\left\{\norm{\pa^\ga\pa^{\al_1} f}_2\norm{\partial ^{\alpha } f}_{\sigma,w} +\norm{\pa^\ga \pa^{\al_1} f}_\sigma\norm{\partial ^{\alpha } f}_{2,w}\right\}
\\&\quad\le C_{l,m} \sqrt{\widetilde{\mathcal{ D}}_{m-1;l,q}(f)}\left\{\sqrt{\mathcal{ {E}}_{3;3,0}(f)}\sqrt{ \widetilde{\mathcal{ D}}_{m;l,q}(f)}+  \sqrt{\widetilde{\mathcal{ D}}_{3;3,0}(f)}\sqrt{ \mathcal{ {E}}_{m;l,q}(f)}\right\}
\\&\quad\le\eta \widetilde{\mathcal{ D}}_{m;l,q}(f)+C_{l,m,\eta}\widetilde{\mathcal{ D}}_{m-1;l,q}(f)\mathcal{ {E}}_{m;l,q}(f).
\end{split}
 \end{equation}
If $2\le |\al_1|\le m-1$ we take $L^4-L^4-L^2$ in \eqref{J40} to have an upper bound of
 \begin{equation}
\begin{split}
 & C_{l,m}\sum_{|\gamma|\le 1} \norm{\pa^\ga\pa^{\al-\al_1} f}_{\sigma,w}\left\{\norm{\pa^\ga\pa^{\al_1} f}_2\norm{\partial ^{\alpha } f}_{\sigma,w} +\norm{\pa^\ga\pa^{\al_1} f}_\sigma\norm{\partial ^{\alpha } f}_{2,w}\right\}
\\&\quad\le C_{l,m} \sqrt{\widetilde{\mathcal{ D}}_{m-1;l,q}(f)}\left\{\sqrt{\mathcal{ {E}}_{m;m,0}(f)}\sqrt{ \widetilde{\mathcal{ D}}_{m;l,q}(f)}+  \sqrt{\widetilde{\mathcal{ D}}_{m;m,0}(f)}\sqrt{ \mathcal{ {E}}_{m;l,q}(f)}\right\}
\\&\quad\le\eta \widetilde{\mathcal{ D}}_{m;l,q}(f)+C_{l,m,\eta}\widetilde{\mathcal{ D}}_{m-1;l,q}(f)\mathcal{ {E}}_{m;l,q}(f).
\end{split}
 \end{equation}

Finally, we turn to deal with the term $I_5$ and we shall use the split $f={\bf P}f+\{{\bf I-P}\}f$. For the hydrodynamic part, we have
\begin{equation}
\begin{split}
&\sum_{\alpha _{1}<\alpha }C_{\alpha }^{\alpha _{1}}\int e^{\pm
2(q+1)\phi  }w^{2}\partial^{\alpha }f_{\pm } \partial
^{\alpha -\alpha _{1}}\nabla _{x}\phi  \cdot \partial^{\alpha _{1}}\left(\nabla _{v}\FP f_{\pm } -v \FP f_{\pm } \right)
\\&\quad\lesssim
\sum_{\alpha _{1}<\alpha }C_{\alpha }^{\alpha _{1}}
\int \norms{\partial ^{\alpha
_{1}}f}_{2,w}\norms{\partial
^{\alpha -\alpha _{1}}\nabla _{x}\phi}\norms{\partial ^{\alpha }f}_{\sigma,w}dx.
\end{split}
\end{equation}
This term can be estimated as \eqref{I50} with only the  unweighted norms replaced by the weighted norms in the upper bounds. Since the microscopic part is always part of our dissipation rate, we can use the argument of Lemma 9 in \cite{G12} to obtain that for $|\al|=m\ge 2$,
\begin{equation}
\begin{split}
&\sum_{\alpha _{1}<\alpha }C_{\alpha }^{\alpha _{1}}\int e^{\pm
2(q+1)\phi  }w^{2}\partial^{\alpha }f_{\pm } \partial
^{\alpha -\alpha _{1}}\nabla _{x}\phi  \cdot \partial^{\alpha _{1}}\left(\nabla _{v}\{\FI-\FP\} f_{\pm } -v \{\FI-\FP\} f_{\pm } \right)
\\&\quad\le C_m \norm{\partial ^{\alpha }  f }_{\sigma
,w}\left(\norm{\partial^\alpha \nabla_x\phi}_{H^1}\norm{\norms{\{\FI-\FP\} f}_{\sigma,\frac{w(0,0)}{\langle v\rangle^2}}}_{H^{\frac{3}{4}}} +\norm{ \nabla_x^2\phi}_{H^2 \cap H^{m-1}}\widetilde{\mathcal{D}}%
_{m-1;l,q}(f) \right).
\end{split}
\end{equation}
These two estimates imply that for $|\alpha|=2$,
\begin{equation}
I_5\lesssim \sqrt{\mathcal{ E} _{2;2,0}(f)}\widetilde{\mathcal{ D}}_{2;l,q}(f);
\end{equation}
and for $|\alpha|=m\ge 3$,
\begin{equation}
I_5\lesssim \eta\norm{\partial ^{\alpha}f}_{\sigma,w}^2 +C_{m,\eta}  \widetilde{\mathcal{ D}}_{m-1;l,q}(f){\mathcal{ {E}}_{m;l,q}(f)} .
 \end{equation}

Consequently, collecting the estimates we thus conclude the lemma.
\end{proof}

We now turn to the mixed spatial-velocity derivatives of the solution. First notice that in view of Lemma \ref{energy lemma 1} and Lemma \ref{energy lemma 2}, it suffices to estimate the remaining microscopic part $\partial^\al_\beta \{{\bf I-P}\}f$
for $|\al|+|\beta|\le m$ and $|\al|\le m-1$ with $m\ge 2$. We   use the macro-micro decomposition:
\begin{equation}\label{I-P equation}
\begin{split}
& \{\partial _{t} +v\cdot \nabla _{x} \mp \nabla_x\phi\cdot \nabla _{v} \}\{\FI-\FP\}f_{\pm }
\pm2\{\FI-\FP\}( \nabla_x\phi\cdot v \sqrt{\mu})+L_{\pm }f \\
&\quad=  \Gamma _{\pm
}(f,f)\mp \nabla_x\phi\cdot v\{\FI-\FP\}f_{\pm }+\FP(v\cdot \nabla _{x}f_{\pm })-v\cdot \nabla _{x}\FP f_{\pm } \\
&\qquad \pm \nabla_x\phi\cdot \left(\FP(vf_{\pm }) - v \FP f_{\pm }
 + \nabla _{v}\FP f_{\pm }-  \FP(\nabla _{v}f_{\pm })\right).
\end{split}
\end{equation}
\begin{lemma}\label{energy I-P}
Assume $f_{0}\in \testF$ and assume $f$ is the solution constructed in Theorem \ref{local solution} with $\mathcal{ {E}}%
_{2;2,0}(f)\leq M$. Let $w=w(\al,\be)$ in \eqref{weight}. For $|\alpha |+ |\beta| \leq 2$ with $|\al|\leq 1$, we have that for any $\eta>0$,
\begin{equation}\label{I-P 2 inequality}
\begin{split}
&\frac{d}{dt} \int \sum_{\pm }\frac{e^{\pm 2(q+1)\phi }w^2|\partial_{\beta} ^{\alpha
}\{\FI-\FP\}f_{\pm }|^{2}}{2}+\la\norm{\partial_{\beta}^\al\{\FI-\FP\}f }_{\si, w}^2
\\&\quad
\le C_l\left[\norm{\pa_t\phi}_{\infty}+\norm{\na_x \phi}_{\infty}\right]\norm{\partial_{\beta}^\al\{\FI-\FP\}f }_{2, w}^2
+C\left(\sqrt{M}+\eta\right)\widetilde{\mathcal{D}}_{2;l,q}(f )\\&\qquad +C_{l,\eta} \left(\sum_{\beta'<\beta}\norm{\partial _{\beta' }^{\alpha }\{\FI-\FP\} f }_{\sigma ,w(\alpha ,\beta'
)}^{2}+\norm{\partial ^{\alpha }\{\FI-\FP\} f }_{\sigma}^{2}+\norm{\nabla_x\phi}_{2}^2+\norm{\na^{|\al|+1}f}^2_\si\right).
\end{split}
\end{equation}
For $|\alpha |+ |\beta|\le m$ with $m\ge 3$ and $|\al|\leq m-1$, we have that for any $\eta>0$,
\begin{equation}\label{I-P m inequality}
\begin{split}
&\frac{d}{dt} \int \sum_{\pm }\frac{e^{\pm 2(q+1)\phi }w^2|\partial_{\beta} ^{\alpha
}\{\FI-\FP\}f_{\pm }|^{2}}{2}+\norm{\partial_{\beta}^\al\{\FI-\FP\}f}_{\si, w}^2
\\&\quad
\le C_{l,m} \left[\norm{\pa_t\phi}_{\infty}+\norm{\na_x \phi}_{\infty}\right]\norm{\partial_{\beta}^\al\{\FI-\FP\}f }_{2, w}^2
+C\left(\sqrt{M}+\eta\right)\widetilde{\mathcal{D}}_{m;l,q}(f )
 \\&\qquad+C_{l,m,\eta}\left( \left[{{\mathcal{E}_{m;l,q}(f )}}+1\right]  {{\widetilde{\mathcal{D}}_{m-1;l,q}(f )}} +\norm{\na^{|\al|+1}f}^2_\si\right).
\end{split}
\end{equation}
\end{lemma}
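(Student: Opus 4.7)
The approach is to apply $\partial^\alpha_\beta$ to the equation \eqref{I-P equation} for $\{\FI-\FP\}f_\pm$ and then take the $L^2$ inner product against $e^{\pm2(q+1)\phi}w^2\partial^\alpha_\beta\{\FI-\FP\}f_\pm$, exactly in the spirit of Lemma \ref{energy lemma 2} but now tracking the extra $\partial_\beta$ and the hydrodynamic source terms on the right-hand side of \eqref{I-P equation}. The weight $e^{\pm2(q+1)\phi}$ is again chosen so that the worst momentum contribution in the transport term $\mp\nabla_x\phi\cdot\nabla_v\{\FI-\FP\}f_\pm$ is exactly cancelled (as in the derivation of Lemma \ref{energy lemma 2} and as explained in \cite{G12}); whatever remains is controlled by $\norm{\partial_t\phi}_\infty+\norm{\nabla_x\phi}_\infty$ times the weighted $L^2$ norm of $\partial^\alpha_\beta\{\FI-\FP\}f$, producing the first term on the right-hand side. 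The time derivative yields the desired functional; the coercivity of $L$ with the weight $w(\alpha,\beta)$ is extracted from Lemma \ref{linear c weight}, which also produces the two types of error terms: a small $\eta$ times $\sum_{|\beta_1|=|\beta|}\norms{\partial^\alpha_{\beta_1}\{\FI-\FP\}f}_{\sigma,w}^2$ (absorbed into $\widetilde{\mathcal{D}}_{m;l,q}(f)$) and a $C_{l,m,\eta}$ times $\sum_{|\beta_1|<|\beta|}\norms{\partial^\alpha_{\beta_1}\{\FI-\FP\}f}_{\sigma,w(\alpha,\beta_1)}^2$ (the cascade term appearing in \eqref{I-P 2 inequality}).

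Next I treat the nonlinear collision term $\partial^\alpha_\beta\Gamma_\pm(f,f)$ by invoking the refined estimate \eqref{ga es} of Lemma \ref{nonlinear c}. The point is that the factor $\norms{\mu^\delta\partial^{\alpha_1}_{\bar\beta}f}_2$ may be bounded by either $\norms{\partial^{\alpha_1}_{\bar\beta}f}_2$ or $\norms{\partial^{\alpha_1}_{\bar\beta}f}_\sigma$, and analogously for the last factor; I choose in each case whichever gives a dissipation factor plus an energy factor. One does the standard $L^3_x$--$L^6_x$--$L^2_x$ (or $L^\infty_x$--$L^2_x$--$L^2_x$, or $L^4_x$--$L^4_x$--$L^2_x$) split depending on which $\partial^{\alpha_1}$ carries the fewest spatial derivatives, exactly as in Lemma \ref{energy lemma 2}. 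The outcome is an upper bound of the form $(\sqrt{M}+\eta)\widetilde{\mathcal{D}}_{m;l,q}(f)+C_{l,m,\eta}\mathcal{E}_{m;l,q}(f)\widetilde{\mathcal{D}}_{m-1;l,q}(f)$ when $|\alpha|+|\beta|=m\ge3$, while for $|\alpha|+|\beta|\le2$ with $|\alpha|\le1$ all derivatives fall within the base energy, giving the form shown in \eqref{I-P 2 inequality}. The term $\pm2\{\FI-\FP\}(\nabla_x\phi\cdot v\sqrt{\mu})$ is harmless since $v\sqrt{\mu}$ decays exponentially: it produces a term estimated by $\norm{\nabla_x\phi}_2^2+\eta\norms{\partial^\alpha_\beta\{\FI-\FP\}f}_{\sigma,w}^2$. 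The Poisson nonlinearity $\nabla_x\phi\cdot v\{\FI-\FP\}f_\pm$ is handled by the same split as $I_5$ in Lemma \ref{energy lemma 2}, using the cascade $w(\alpha,\beta)\le w(\gamma,0)$ for $|\gamma|\le|\alpha|$ to shuffle the weight onto whichever factor has fewest spatial derivatives.

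The remaining, and genuinely new, terms are the hydrodynamic sources on the second and third lines of \eqref{I-P equation}: $\FP(v\cdot\nabla_xf_\pm)-v\cdot\nabla_x\FP f_\pm$ and $\nabla_x\phi\cdot(\FP(vf_\pm)-v\FP f_\pm+\nabla_v\FP f_\pm-\FP(\nabla_v f_\pm))$. Since $\FP$ is the projection on the finite-dimensional span of polynomials in $v$ times $\sqrt{\mu}$, each derivative $\partial^\alpha_\beta$ of such an expression is bounded pointwise in $v$ by $\mu^\delta$ times $|\partial^{|\alpha|+1}\FP f|$ (for the transport commutator) or $|\partial^\alpha\nabla_x\phi||\partial^{|\alpha_1|}\FP f|$ (for the Poisson commutator). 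Integration against $w^2\partial^\alpha_\beta\{\FI-\FP\}f_\pm$ and Cauchy-Schwarz then produce $\norm{\nabla^{|\alpha|+1}f}_\sigma^2$ (which coincides with $\norm{\nabla^{|\alpha|+1}\FP f}_\sigma^2+\norm{\nabla^{|\alpha|+1}\{\FI-\FP\}f}_\sigma^2$ and is included as the last term on the right-hand side of \eqref{I-P 2 inequality}) plus lower-order pieces absorbed in $\widetilde{\mathcal{D}}_{m-1;l,q}(f)$; the exponential decay of $\mu$ on $\FP f$ kills the weight $w$ entirely, so there is no difficulty passing from $w$-weighted estimates back to the unweighted norm.

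I expect the main obstacle to be the careful bookkeeping of the cascade. Specifically, when $|\beta|\ge 1$, Lemma \ref{linear c weight} produces lower-order $\beta$-pieces in the stronger weight $w(\alpha,\beta_1)$, and Lemma \ref{nonlinear c} produces factors $\norms{\partial^\alpha_{\bar\beta}f}_{\sigma,w(\alpha,\beta)}$ with $\bar\beta\le\beta_1\le\beta$ while the external weight on $\partial^\alpha_\beta\{\FI-\FP\}f$ is already $w(\alpha,\beta)$; one must verify that these inner weights are majorized by the correct $w(\alpha,\beta-\beta_1)$ or $w(\alpha,0)$ on the corresponding factor so the cascade closes. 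The resolution is the pointwise inequality $w(\alpha,\beta)\le w(\alpha',\beta')$ whenever $|\alpha'|+|\beta'|\le|\alpha|+|\beta|$, which lets me place the full weight on the highest-order factor while downgrading the low-order factors to the base weight that appears in $\mathcal{E}_{m;l,q}(f)$ or $\widetilde{\mathcal{D}}_{m-1;l,q}(f)$. Once this bookkeeping is done, summing everything delivers exactly \eqref{I-P 2 inequality} and \eqref{I-P m inequality}.
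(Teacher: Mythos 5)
Your overall strategy matches the paper's: apply $\partial_\beta^\alpha$ to the macro-micro decomposed equation \eqref{I-P equation}, pair against $e^{\pm 2(q+1)\phi}w^2\partial_\beta^\alpha\{\FI-\FP\}f_\pm$, extract coercivity from Lemma \ref{linear c weight}, treat the collision nonlinearity with Lemma \ref{nonlinear c}, and absorb the hydrodynamic sources into $\norm{\nabla^{|\alpha|+1}f}_\sigma^2$ and lower-order dissipation. Your identification of $I_7$ and $I_8$ (the $\FP$-commutators) and your cascade bookkeeping via $w(\alpha,\beta)\le w(\alpha',\beta')$ for $|\alpha'|+|\beta'|\le|\alpha|+|\beta|$ are both correct.

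However, there is a genuine omission: when $|\beta|\ge 1$, applying $\partial_\beta$ to the streaming term $v\cdot\nabla_x\{\FI-\FP\}f_\pm$ does not commute. One picks up a velocity-transport commutator
\begin{equation*}
\delta_\beta^{\mathbf{e}_i}\,\partial_{\beta-\mathbf{e}_i}^{\alpha+\mathbf{e}_i}\{\FI-\FP\}f_\pm,
\end{equation*}
summed over $i$, which is the term $I_1$ in the paper's proof. You account only for the cancellation of the $\mp\nabla_x\phi\cdot\nabla_v$ transport via the exponential weight, but say nothing about $v\cdot\nabla_x$. This is not a negligible detail: the commutator raises the spatial derivative order to $|\alpha|+1$ (while lowering $|\beta|$ by one), and it must be paired with $\partial_\beta^\alpha\{\FI-\FP\}f_\pm$ and estimated using the $\sigma$-norm bound for weighted velocity derivatives (Lemma 6 of \cite{G12}). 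After Young's inequality this produces a small $\eta$-multiple of $\widetilde{\mathcal D}_{m;l,q}(f)$ (since the raised-order factor sits at total derivative count $|\alpha|+|\beta|\le m$) plus a $C_\eta$-multiple of the cascade term $\norm{\partial_{\beta-\mathbf{e}_i}^{\alpha}\{\FI-\FP\}f}_{\sigma,w(\alpha,\beta-\mathbf{e}_i)}^2$, which is exactly the $\sum_{\beta'<\beta}$ term in \eqref{I-P 2 inequality}. Without this step your energy identity is simply wrong, so the gap should be filled before the rest of the argument, which is otherwise sound, can close.
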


\begin{proof}

Applying $\partial_\beta^\alpha$ with $|\al|+|\beta|\le m$ for $m\ge 2$ and $|\al|\le m-1$ to \eqref{I-P equation} and then taking the $L^2$ inner product of the resulting identity with
$e^{\pm2(q+1)\phi}w^{2}\partial_{\beta }^{\alpha }\{{\bf I-P}\}f_{\pm }$, we obtain
\begin{eqnarray}
\begin{split}
& \frac{d}{dt}  \int \frac{e^{\pm 2(q+1)\phi }w^{2}|\partial
_{\beta }^{\alpha }\{\FI-\FP\}f_{\pm }|^{2}}{2}  +\int \Llangle
w^{2}\partial _{\beta }^{\alpha }L_\pm\{\FI-\FP\}f,\partial _{\beta }^{\alpha
}\{\FI-\FP\}f_\pm\Rrangle    \\
& \quad=-\int e^{\pm 2(q+1)\phi }w^{2}\delta _{\beta }^{\mathbf{e}%
_{i}}\partial _{\beta -\mathbf{e}_{i}}^{\alpha +\mathbf{e}_{i}}\{\FI-\FP\}f_{\pm }\partial _{\beta }^{\alpha }\{\FI-\FP\}f_{\pm }  \\
& \qquad\mp \int \left[\frac{2(l-|\alpha |-|\beta |)}{1+|v|^{2}}\nabla _{x}\phi
\cdot v-(q+1)\pa_t\phi\right]e^{\pm 2(q+1)\phi }w^{2}|\partial _{\beta
}^{\alpha }\{\FI-\FP\}f_{\pm }|^{2}
\\& \qquad+\int\left(1-e^{\pm 2(q+1)\phi }\right)w^{2}\partial _{\beta }^{\alpha }\{\FI-\FP\}f_{\pm }\partial _{\beta }^{\alpha }L_\pm\{\FI-\FP\}f \\
& \qquad \mp 2\int e^{\pm 2(q+1)\phi}w^2\partial^\al_\be\{\FI-\FP\}f_\pm\partial^\al_\be\{\FI-\FP\}( \nabla _{x}\phi\cdot v \sqrt{\mu}) \\
& \qquad+\int e^{\pm 2(q+1)\phi }w^{2}\partial _{\beta }^{\alpha }\Gamma _{\pm
}(f,f)\partial _{\beta }^{\alpha }\{\FI-\FP\}f_{\pm }
\\
& \qquad\pm \sum_{\alpha _{1}<\alpha }C_{\alpha }^{\alpha _{1}}\int e^{\pm
2(q+1)\phi }w^{2}\partial _{\beta }^{\alpha }\{\FI-\FP\}f_{\pm }\partial
^{\alpha -\alpha _{1}}\nabla _{x}\phi
\\&  \qquad\qquad\qquad\qquad
\cdot\left( \nabla _{v}\partial _{\beta
}^{\alpha _{1}}\{\FI-\FP\}f_{\pm }  -\partial _{\beta }[v\partial
^{\alpha _{1}}\{\FI-\FP\}f_{\pm }] \right)   \\
& \qquad\pm \sum_{\alpha _{1}\le\alpha }C_{\alpha }^{\alpha _{1}}\int  e^{\pm
2(q+1)\phi }w^{2}\partial _{\beta }^{\alpha }\{\FI-\FP\}f_{\pm }\partial
^{\alpha -\alpha _{1}}\nabla _{x}\phi
\\&  \qquad\qquad\qquad\qquad
\cdot \partial _{\beta }^{\alpha _{1}}\left(\FP(v f_{\pm })-v\partial
^{\alpha _{1}}\FP f_{\pm }+\nabla _{v} \FP f_{\pm }-\FP(\na_v f_{\pm })  \right)   \\
& \qquad+\int e^{\pm
2(q+1)\phi }w^{2}\partial _{\beta }^{\alpha }\{\FI-\FP\}f_{\pm }\partial _{\beta }^{\alpha }\left(\FP(v\cdot\na_x f_{\pm})
-v\cdot\na_x\FP f_{\pm}\right):=\sum_{i=1}^8 I_i.
\end{split}
\end{eqnarray}
Here $\delta _{\beta }^{\mathbf{e}_{i}}=1$ if $\mathbf{e}_{i}\leq
\beta$;  otherwise, $\delta _{\beta }^{\mathbf{e}_{i}}=0$.

First, by Lemma \ref{linear c weight}, we have that if $\beta=0$,
\begin{equation}
\int \langle
w^{2}\partial ^{\alpha }L \{\FI-\FP\}f,\partial ^{\alpha
}\{\FI-\FP\}f \rangle
 \gtrsim \norm{\partial ^{\alpha }\{\FI-\FP\} f }_{\sigma ,w}^{2}-C_{l,m} \norm{\partial ^{\alpha }\{\FI-\FP\} f }_{\sigma}^{2};
\end{equation}
if $\beta\neq 0$, then for any $\eta >0$,
\begin{equation}
\begin{split}
&\int \langle
w^{2}\partial _{\beta }^{\alpha }L\{\FI-\FP\}f,\partial _{\beta }^{\alpha
}\{\FI-\FP\}f\rangle    \\
&\quad \geq \norm{\partial _{\beta }^{\alpha }\{\FI-\FP\} f }_{\sigma ,w}^{2}-\eta \sum_{|\beta'|=|\beta|}\norm{\partial _{\beta' }^{\alpha }\{\FI-\FP\} f }_{\sigma ,w(\alpha ,\beta'
)}^{2}
-C_{l,m,\eta} \sum_{\beta'<\beta}\norm{\partial _{\beta' }^{\alpha }\{\FI-\FP\} f }_{\sigma ,w(\alpha ,\beta'
)}^{2}.
\end{split}
\end{equation}

We now estimate $I_1\sim I_8$. For any $\eta >0$ and $\beta \geq \mathbf{e}%
_{i}$, by Lemma 6 in \cite{G12} we have
\begin{equation}
\begin{split}
I_1 &\lesssim \norm{\delta _{\beta }^{\mathbf{e}_{i}}\partial
_{\beta -\mathbf{e}_{i}}^{\alpha }\{\FI-\FP\} f }_{\sigma ,w(\alpha ,\beta -%
\mathbf{e}_{i})}\norm{\partial _{\beta -\mathbf{e}_{i}}^{\alpha +\mathbf{e}%
_{i}}\{\FI-\FP\} f }_{\sigma ,w(\alpha +\mathbf{e}_{i},\beta -\mathbf{e}%
_{i})}    \\
&\leq \eta \mathcal{D}_{m;l,q}(f )+C_{\eta }\norm{\delta _{\beta }^{\mathbf{e}_{i}}\partial
_{\beta -\mathbf{e}_{i}}^{\alpha }\{\FI-\FP\} f }_{\sigma ,w(\alpha ,\beta -%
\mathbf{e}_{i})}^2.
\end{split}
\end{equation}
Clearly,
\begin{equation}
I_2\le C_{l,m} \left[\norm{\pa_t\phi}_{\infty }+\norm{\nabla
_{x}\phi }_{\infty }\right]\norm{\partial _{\beta }^{\alpha }\{\FI-\FP\}  f}_{2,w}^{2}
.
\end{equation}
By Lemma \ref{linear c weight}, we have that for
any $\eta >0$,
\begin{equation}
\begin{split}
I_3 &\lesssim \sqrt{M}  \norm{\partial _{\beta }^{\alpha }\{\FI-\FP\} f }_{\sigma ,w}^{2}+C_{l,m}\sqrt{M}\norm{\partial^\al \{\FI-\FP\} f}_\sigma^2
\\&\quad
+\eta \mathcal{D}_{m;l,q}(f )+C_{l,m,\eta }\sum_{\beta'<\beta}\norm{\partial _{\beta' }^{\alpha }\{\FI-\FP\} f }_{\sigma ,w(\alpha ,\beta'
)}^{2}.
\end{split}
\end{equation}

For the term $I_4$, we can move all the $v$ derivatives $\partial_\beta$ out of $\partial _{\beta }^{\alpha }\{\FI-\FP\} f$ to the remaining factor in $v$; using the exponential decay of $\mu$, we obtain
\begin{equation}
I_4 \le C_{l,m}   \norm{\partial^\alpha \nabla_x\phi}_2\norm{\mu^\delta \partial^{\alpha }\{\FI-\FP\} f}_2
\le C_{l,m}\left( \norm{ \nabla_x\phi}_{2}^2+\norm{\partial^{\alpha }\{\FI-\FP\} f}_{\sigma}^2\right).
\end{equation}

To estimate the term $I_5$, applying the estimate \eqref{ga es} of Lemma \ref{nonlinear c}, we obtain
\begin{equation} \label{I51}
I_{5}\le
\sum_{\alpha_1 \le \alpha\atop \bar{\beta}\le\beta_1 \le \beta}C_{\al}^{\al_1}C_{\beta}^{\beta_1}C_{(\beta_1,\bar{\beta})}\int
\norms{\partial _{\beta -\beta _1}^{\alpha -\alpha _1}   f}_{\sigma ,w} \norms{\mu^\delta\partial _{\bar{\beta}}^{\alpha _1}f}_{2}
\left\{\norms{\partial _\beta^\alpha \{\FI-\FP\} f }_{\sigma ,w}
+l
\norms{\partial _\beta^\alpha \{\FI-\FP\} f }_{2 ,\frac{w}{\langle v\rangle^\frac{3}{2}}}\right\}.
\end{equation}

First notice that for all $|\al|+|\be|\le 2$, if $(\alpha_1,\bar{\beta})=0$ we can always take $L^2-L^\infty-L^2$ in \eqref{I51}, and we use the split $f=\FP f+\{\FI-\FP\} f$ in the factor $\norms{\partial _{\beta -\beta _1}^{\alpha -\alpha _1}   f}_{\sigma ,w}$, to have an upper bound of
\begin{equation}
\begin{split}
 & C \left(\norm{\partial^{\alpha}{\bf P} f }_{2 ,w}+
\norm{\partial _{\beta -\beta _1}^{\alpha}\{\FI-\FP\} f }_{\sigma ,w}\right) \sum_{1\le |\gamma|\le 2}\norm{ \mu^\delta \pa^\ga f}_{2}
\norm{\partial _\beta^\alpha \{\FI-\FP\} f }_{\sigma ,w}
\\& \quad+C_{l}
\norm{\partial^{\alpha}{\bf P} f }_{2 ,w}\sum_{1\le |\gamma|\le 2}\norm{ \mu^\delta \pa^\ga f}_{2}
\norm{\partial _\beta^\alpha \{\FI-\FP\} f }_{\sigma ,w}
\\& \quad+C_{l}
\norm{\partial _{\beta -\beta _1}^{\alpha}\{\FI-\FP\} f }_{\sigma ,w}\sum_{1\le |\gamma|\le 2}\norm{ \mu^\delta \pa^\ga f}_{2}
\norm{\partial _\beta^\alpha \{\FI-\FP\} f }_{2 ,w}
\\& \quad\lesssim   \sqrt{\mathcal{E}_{2;l,q}(f )}\sqrt{\widetilde{\mathcal{D}}_{2;2,0}(f )}\sqrt{\widetilde{\mathcal{D}}_{2;l,q}(f )}
+\sqrt{\widetilde{\mathcal{D}}_{2;l,q}(f )}\sqrt{\mathcal{E}_{2;2,0}(f )}\sqrt{\widetilde{\mathcal{D}}_{m;l,q}(f )}
\\& \qquad+C_{l}  \sqrt{\mathcal{E}_{2;l,q}(f )}\sqrt{\widetilde{\mathcal{D}}_{2;2,0}(f )}\sqrt{\widetilde{\mathcal{D}}_{2;l,q}(f )}
+C_{l} \sqrt{\widetilde{\mathcal{D}}_{2;l,q}(f )}\sqrt{\widetilde{\mathcal{D}}_{2;2,0}(f )}\sqrt{\mathcal{E}_{2;l,q}(f )}
\\& \quad\lesssim \left(\sqrt{\mathcal{E}_{2;2,0}(f )}+\eta\right)\widetilde{\mathcal{D}}%
_{2;l,q}(f )+C_{l,\eta} {\widetilde{\mathcal{D}}_{2;2,0}(f )}{\mathcal{E}_{2;l,q}(f )} .
\end{split}
\end{equation}
Here we have used the fact $w(\alpha,\beta)\le w(\alpha,\beta -\beta _1)$. Notice carefully that here we adjusted the energy and dissipation components by making full use of the advantage that the terms $\norms{\mu^\delta\partial _{\bar{\beta}}^{\alpha _1}g_1}_{2}$ and $\norms{\partial _\beta^\alpha g_3}_{2 ,\frac{w}{\langle v\rangle^{\frac {3}{ 2}}}}$ can be included in either the energy or the dissipation when they are hit by the spatial derivatives. Note that we have concluded the case $|\al|+|\beta|=0$.

When $|\al|+|\beta|=1$, the remaing case is of $(\alpha-\alpha_1,\beta-\beta_1)=0$,
and we take $L^6-L^3-L^2$ in \eqref{I51} to have an upper bound of
\begin{equation}
\begin{split}
 &C \sum_{|\gamma|=1}\norm{\pa^\ga f }_{\sigma ,w}\sum_{|\gamma|\le 1}\norm{
\mu^\delta \pa^\ga  \partial _{\bar{\beta}}^{\alpha } f }_{2}
\norm{\partial _\beta^\alpha \{\FI-\FP\} f }_{\sigma ,w}
\\&\quad+C_l\sum_{|\gamma|=1}\norm{\pa^\ga f }_{\sigma ,w}\sum_{|\gamma|\le 1}\norm{
\mu^\delta \pa^\ga  \partial _{\bar{\beta}}^{\alpha } f }_{2}
\norm{\partial _\beta^\alpha \{\FI-\FP\} f }_{2 ,w}
\\&\quad\lesssim \sqrt{\widetilde{\mathcal{D}}%
_{2;l,q}(f )}\sqrt{\mathcal{E}_{2;2,0}(f )}\sqrt{\widetilde{\mathcal{D}}%
_{2;l,q}(f )}+C_l\sqrt{\widetilde{\mathcal{D}}%
_{2;l,q}(f )}\sqrt{\widetilde{\mathcal{D}}%
_{2;2,0}(f )}\sqrt{\mathcal{E}_{2;l,q}(f )}
\\&\quad\lesssim \left(\sqrt{\mathcal{E}_{2;2,0}(f )}+\eta\right)\widetilde{\mathcal{D}}%
_{2;l,q}(f )+C_{l,\eta} {\widetilde{\mathcal{D}}_{2;2,0}(f )}{\mathcal{E}_{2;l,q}(f )} .
\end{split}
\end{equation}
Here we have used the fact $w(\alpha,\beta)= w(\gamma,0)$ for $|\gamma|= 1$.  This concludes the case $|\al|+|\beta|=1$.

When $|\al|+|\be|= 2$,  if $(\alpha-\alpha_1,\beta-\beta_1)=0$
we take $L^\infty-L^2-L^2$  in \eqref{I51} to have an upper bound of
\begin{equation}
\begin{split}
 &C \sum_{1\le |\gamma|\le 2}\norm{\pa^\ga f }_{\sigma ,w}\norm{
\mu^\delta  \partial _{\bar{\beta}}^{\alpha } f }_{2}
\norm{\partial _\beta^\alpha \{\FI-\FP\} f }_{\sigma ,w}
\\&\quad+C_l\sum_{1\le |\gamma|\le 2}\norm{\pa^\ga f }_{\sigma ,w} \norm{
\mu^\delta \partial _{\bar{\beta}}^{\alpha } f }_{2}
\norm{\partial _\beta^\alpha \{\FI-\FP\} f }_{2 ,w}
\\&\quad\lesssim \sqrt{\widetilde{\mathcal{D}}%
_{2;l,q}(f )}\sqrt{\mathcal{E}_{2;2,0}(f )}\sqrt{\widetilde{\mathcal{D}}%
_{2;l,q}(f )}+C_l\sqrt{\widetilde{\mathcal{D}}%
_{2;l,q}(f )}\sqrt{\widetilde{\mathcal{D}}%
_{2;2,0}(f )}\sqrt{\mathcal{E}_{2;l,q}(f )}
\\&\quad\lesssim \left(\sqrt{\mathcal{E}_{2;2,0}(f )}+\eta\right)\widetilde{\mathcal{D}}%
_{2;l,q}(f )+C_{l,\eta} {\widetilde{\mathcal{D}}_{2;2,0}(f )}{\mathcal{E}_{2;l,q}(f )} .
\end{split}
\end{equation}
Here we have used the fact $w(\alpha,\beta)\le w(\gamma,0)$ for $1\le|\gamma|\le 2$.
If $(\alpha_1,\bar{\beta})\neq0$ and $(\alpha-\alpha_1,\beta-\beta_1)\neq 0$, then $|\alpha_1|+|\bar{\beta}|=1$ and
$|\alpha-\alpha_1|+|\beta-\beta_1|=1$ since $|\al|+|\be|= 2$. Hence we take $L^6-L^3-L^2$ in \eqref{I51} to have an upper bound of
\begin{equation}
\begin{split}
 &C
\sum_{|\gamma|=1}\norm{\pa^\ga\partial _{\beta -\beta _1}^{\alpha-\alpha_1} f }_{\sigma ,w}\sum_{|\gamma|\le 1}\norm{
\mu^\delta   \pa^\ga \partial _{\bar{\beta}}^{\alpha _1} f }_{2}
\norm{\partial _\beta^\alpha \{\FI-\FP\} f }_{\sigma ,w}
\\&\quad +C_l
\sum_{|\gamma|=1}\norm{\pa^\ga\partial _{\beta -\beta _1}^{\alpha-\alpha_1} f }_{\sigma ,w}\sum_{|\gamma|\le 1}\norm{
\mu^\delta   \pa^\ga \partial _{\bar{\beta}}^{\alpha _1} f }_{2}
\norm{\partial _\beta^\alpha \{\FI-\FP\} f }_{2 ,w}
\\&\quad\lesssim \sqrt{\widetilde{\mathcal{D}}%
_{2;l,q}(f )}\sqrt{\mathcal{E}_{2;2,0}(f )}\sqrt{\widetilde{\mathcal{D}}%
_{2;l,q}(f )}+C_l\sqrt{\widetilde{\mathcal{D}}%
_{2;l,q}(f )}\sqrt{\widetilde{\mathcal{D}}%
_{2;2,0}(f )}\sqrt{\mathcal{E}_{2;l,q}(f )}
\\&\quad\lesssim \left(\sqrt{\mathcal{E}_{2;2,0}(f )}+\eta\right)\widetilde{\mathcal{D}}%
_{2;l,q}(f )+C_{l,\eta} {\widetilde{\mathcal{D}}_{2;2,0}(f )}{\mathcal{E}_{2;l,q}(f )} .
\end{split}
\end{equation}
Here we have used the fact $w(\alpha,\beta)\le w(\gamma+\alpha-\alpha_1,\beta-\beta_1)$ for $|\gamma|\le 1$. This completes the case $|\al|+|\beta|=2$.

Now when $|\alpha |+|\beta |=m\geq 3$, we shall separate four cases. The first case is either $(\alpha_1,\bar{\beta})=(\alpha,\beta)$ or $(\alpha-\alpha_1,\beta-\beta_1)=(\alpha,\beta)$. Indeed we now have $C_{\al}^{\al_1}C_{\beta}^{\beta_1}C_{(\beta_1,\bar{\beta})}=C$. We note that $m\ge 3$ so that we can take $L^\infty$ of the term without derivatives and $L^2$ of the other two terms in \eqref{I51}, also when $(\alpha-\alpha_1,\beta-\beta_1)=(\alpha,\beta)$ we use the split $f=\FP f+\{\FI-\FP\}f$ in the factor $\norms{\partial _{\beta}^{\alpha}   f}_{\sigma ,w}$, to have an upper bound of
\begin{equation}
\begin{split}
 &C_{l}\sum_{1\le |\gamma|\le 2}\norm{\pa^\ga f }_{\sigma ,w}\norm{
\mu^\delta \partial _{{\beta}}^{\alpha} f }_{2}
\norm{\partial _\beta^\alpha \{\FI-\FP\} f }_{\sigma ,w}
\\&\quad+ C_m\norm{\partial^{\alpha}  {\bf P} f }_{2 ,w} \sum_{1\le |\gamma|\le 2}\norm{
\mu^\delta \pa^\ga f }_{2}
\norm{\partial _{{\beta}}^{\alpha} \{\FI-\FP\} f }_{\sigma ,w}
\\&\quad+ C\norm{\partial _{{\beta}}^{\alpha} \{\FI-\FP\}  f }_{\sigma ,w} \sum_{1\le |\gamma|\le 2}\norm{
\mu^\delta \pa^\ga f }_{2}
\norm{\partial _{{\beta}}^{\alpha} \{\FI-\FP\} f }_{\sigma ,w}
\\& \quad+C_{l,m}
\norm{\partial^{\alpha}{\bf P} f }_{2 ,w}\sum_{1\le |\gamma|\le 2}\norm{ \mu^\delta \pa^\ga f}_{2}
\norm{\partial _\beta^\alpha \{\FI-\FP\} f }_{\sigma ,w}
\\& \quad+C_{l,m}
\norm{\partial _{\beta}^{\alpha}\{\FI-\FP\} f }_{\sigma ,w}\sum_{1\le |\gamma|\le 2}\norm{ \mu^\delta \pa^\ga f}_{2}
\norm{\partial _\beta^\alpha \{\FI-\FP\} f }_{2 ,w}
\\&\qquad\lesssim C_{l,m}\sqrt{\widetilde{\mathcal{D}}_{2;l,q}(f ) }\sqrt{\mathcal{E}_{m;l,q}(f )}\sqrt{\widetilde{\mathcal{D}}_{m;l,q}(f )}+C_{l,m}\sqrt{\mathcal{E}_{m;l,q}(f )}\sqrt{\widetilde{\mathcal{D}}_{2;2,0}(f ) }\sqrt{\widetilde{\mathcal{D}}_{m;l,q}(f )}
\\&\qquad\quad  +  \sqrt{\widetilde{\mathcal{D}}_{m;l,q}(f )}\sqrt{ {\mathcal{E}}_{2;2,0}(f ) }\sqrt{ \widetilde{{\mathcal{D}}}_{m;l,q}(f )}
\\&\qquad\lesssim \left(\sqrt{ {\mathcal{E}}_{2;2,0}(f ) }+\eta\right) \widetilde{\mathcal{D}}%
_{m;l,q}(f )+C_{l,m,\eta} {\widetilde{\mathcal{D}}_{2;l,q}(f )}{\mathcal{E}_{m;l,q}(f )} .
\end{split}
\end{equation}

We next consider the case of $|\alpha_1|+|\bar{\beta}|=m-1$ and  $|\alpha-\alpha_1|+|\beta-\beta_1|=1$, and we take $L^6-L^3-L^2$  in \eqref{I51} to have an upper bound of
\begin{equation}
\begin{split}
&C_{l,m}\sum_{|\gamma|=1}\norm{\pa^\ga \partial _{{\beta}-\beta_1}^{\alpha-\alpha_1} f }_{\sigma ,w}
\sum_{|\gamma|\le 1}\norm{
\mu^\delta \pa^\ga \partial _{\bar{\beta}}^{\alpha_1} f }_{2}
\norm{\partial _\beta^\alpha \{\FI-\FP\} f }_{\sigma ,w}
\\&\quad\le   C_{l,m}\sqrt{\widetilde{\mathcal{D}}_{2;l,q}(f )}\sqrt{ {\mathcal{E}_{m;l,q}(f )}}\sqrt{{{\widetilde{\mathcal{D}}_{m;l,q}(f )}}}
\\&\quad\le   \eta \widetilde{\mathcal{D}}%
_{m;l,q}(f )+C_\eta \widetilde{\mathcal{D}}%
_{m-1;l,q}(f )\mathcal{E}_{m;l,q}(f ).
\end{split}
\end{equation}

We now consider the case of $|\alpha_1|+|\bar{\beta}|=1$ and $|\alpha-\alpha_1|+|\beta-\beta_1|=m-1$, and we shall use the split $f=\FP f+\{\FI-\FP\} f$ in the factor $\norms{\partial _{\beta -\beta _1}^{\alpha -\alpha _1}   f}_{\sigma ,w}$. For the hydrodynamic part of \eqref{I51} we take $L^3-L^6-L^2$; while for the microscopic part of \eqref{I51} we take $L^2-L^\infty-L^2$ respectively to have an upper bound of
\begin{equation}
\begin{split}
&  C_{l,m} \sum_{|\gamma|\le 1} \norm{ \pa^\ga \partial^{\alpha-\alpha_1}{\bf P} f }_{2 ,w}
\sum_{ |\gamma|= 1} \norm{
\mu^\delta \pa^\ga \partial _{\bar{\beta}}^{\alpha_1} f }_{2}
\norm{\partial _\beta^\alpha \{\FI-\FP\} f }_{\sigma ,w}
\\&\quad+C_{l,m}  \norm{  \partial _{{\beta}-\beta_1}^{\alpha-\alpha_1}\{\FI-\FP\}  f }_{\sigma ,w}
\sum_{1\le|\gamma|\le 2} \norm{
\mu^\delta \pa^\ga \partial _{\bar{\beta}}^{\alpha_1} f }_{2}
\norm{\partial _\beta^\alpha \{\FI-\FP\} f }_{\sigma ,w}
\\&\quad\le   C_{l,m}\sqrt{{\mathcal{E}_{m;l,q}(f )}}\sqrt{ \widetilde{\mathcal{D}}_{2;2,0}(f )}\sqrt{{{\widetilde{\mathcal{D}}_{m;l,q}(f )}}}+C_{l,m}
\sqrt{{{\widetilde{\mathcal{D}}_{m-1;l,q}(f )}}}\sqrt{ {\mathcal{E}_{3;3,0}(f )}}\sqrt{{{\widetilde{\mathcal{D}}_{m;l,q}(f )}}}
\\&\quad\le  \eta \widetilde{\mathcal{D}}%
_{m;l,q}(f )+C_{l,m,\eta} \widetilde{\mathcal{D}}%
_{m-1;l,q}(f )\mathcal{E}_{m;l,q}(f ).
\end{split}
\end{equation}

The remaining cases are of $2\le |\alpha_1|+|\bar{\beta}|\le m-2$ and $2\le |\alpha-\alpha_1|+|\beta-\beta_1|\le m-2$ (surely, now $m\ge 4$), and we take $L^6-L^3-L^2$ in \eqref{I51} to bound it by
\begin{equation}
\begin{split}
&C_{l,m}\sum_{|\ga|=1}\norm{\pa^\ga \partial _{{\beta}-\beta_1}^{\alpha-\alpha_1} f }_{\sigma ,w}\sum_{|\ga|\le 1} \norm{
\mu^\delta \pa^\ga \partial _{\bar{\beta}}^{\alpha_1}f }_{2}
\norm{\partial _\beta^\alpha \{\FI-\FP\} f }_{\sigma ,w}
\\& \quad\le  C_{l,m}\sqrt{{\widetilde{\mathcal{D}}_{m-1;l,q}(f )}}\sqrt{{\mathcal{E}_{m-1;m-1,0}(f )}}\sqrt{{\widetilde{\mathcal{D}}_{m;l,q}(f )}}
\\& \quad\le    \eta{{\widetilde{\mathcal{D}}_{m;l,q}(f )}}+C_{l,m,\eta} {{\widetilde{\mathcal{D}}_{m-1;l,q}(f )}}{{\mathcal{E}_{m-1;m-1,0}(f )}} .
\end{split}
\end{equation}

Notice that the term $I_6$ only occurs when $|\alpha|+|\beta|\ge 1$. Since $\{{\bf I-P}\}f$ is always part of our dissipation rate, so we can use the argument of Lemma 9 in \cite{G12} to obtain that for $|\alpha|+|\beta|\le m$,
\begin{equation}
\begin{split}
I_6&\le C_m\norm{\partial _{\beta }^{\alpha }\{\FI-\FP\} f }_{\sigma
,w}\left(\norm{\partial^\alpha \nabla_x\phi}_{H^1}\norm{ \norms{\{\FI-\FP\} f }_{\sigma ,\frac{w(0,0)}{\langle
v\rangle ^{2}}}}_{H^{\frac{3}{4}}}+ \norm{  \nabla_x^2\phi}_{H^2\cap H^{m-1}}^2 \widetilde{\mathcal{D}}%
_{m-1;l,q}(f)\right).
\end{split}
\end{equation}
In particular, if $|\alpha|+|\beta|\le 2$,
\begin{equation}
I_6\lesssim\sqrt{\mathcal{E}%
_{2;2,0}(f)}\widetilde{\mathcal{D}}%
_{2;l,q}(f);
\end{equation}
if $|\alpha|+|\beta|=m\ge 3$, since $|\al|\le m-1$,
\begin{equation}
I_6\le \eta \widetilde{\mathcal{D}}%
_{m;l,q}(f)+C_{m,\eta } \mathcal{E}%
_{m-1;m-1,0}(f)\widetilde{\mathcal{D}}%
_{m-1;l,q}(f) .
\end{equation}

For the term $I_7$, we make use of the exponential decay in $v$ of the hydrodynamic part to get
\begin{equation}\label{I70}
I_7\le C_{l,m}\sum_{\alpha _{1}\le\alpha } \int\norms{\partial _{\beta }^{\alpha }\{\FI-\FP\} f }_\sigma \norms{ \partial
^{\alpha -\alpha _{1}}\nabla _{x}\phi}\norms{
 \partial^{\alpha _{1}}  f}_\sigma\,dx.
\end{equation}
If $|\al_1|\geq 1$, we take $L^2-L^\infty-L^2$ in \eqref{I70} to have an upper bound of
\begin{equation}
\begin{split}
 & C_{l,m}
\norm{\partial _{\beta }^{\alpha }\{\FI-\FP\}f}_{\si}\sum_{1\le|\gamma|\le 2}\norm{\pa^\ga\partial
^{\alpha -\alpha _{1}}\nabla _{x}\phi }_{ 2 }\norm{\partial^{\alpha_1}f}_{\sigma}
\\&\quad\le C_{l,m}
\sqrt{{{\widetilde{\mathcal{D}}_{m;m,0}(f )}}}\sqrt{{{\mathcal{E}_{m-1;m-1,0}(f )}}}\sqrt{{{\widetilde{\mathcal{D}}_{m-1;m-1,0}(f )}}}
\\&\quad\le  \eta\widetilde{\mathcal{D}}%
_{m;m,0}(f )+C_{l,m,\eta} \mathcal{E}_{m-1;m-1,0}(f )\widetilde{\mathcal{D}}%
_{m-1;m-1,0}(f ).
\end{split}
\end{equation}
Here we have use the fact that $1\le |\alpha_1|\le |\alpha|\le m-1$. If $|\al_1|=0$, we take $L^2-L^3-L^6$ in \eqref{I70} to have an upper bound of
\begin{equation}
\begin{split}
 & C_{l,m}
\norm{\partial _{\beta }^{\alpha }\{\FI-\FP\}f}_{\si}\sum_{|\gamma|\le 1}\norm{\pa^\ga \partial
^{\alpha}\nabla _{x}\phi }_{2}\sum_{|\gamma|= 1}\norm{\pa^\ga f}_{\sigma}
\\&\quad\le C_{l,m}
\sqrt{{{\widetilde{\mathcal{D}}_{m;m,0}(f )}}}\sqrt{{{\mathcal{E}_{m-1;m-1,0}(f )}}}\sqrt{{{\widetilde{\mathcal{D}}_{2;2,0}(f )}}}
\\&\quad\le  \eta\widetilde{\mathcal{D}}%
_{m;m,0}(f )+C_{l,m,\eta} \mathcal{E}_{m-1;m-1,0}(f )\widetilde{\mathcal{D}}%
_{2;2,0}(f ).
\end{split}
\end{equation}

Finally, for $I_8$, we easily have
\begin{equation}
\begin{split}
I_8&\le C_{l,m}
\norm{\partial _{\beta }^{\alpha }\{\FI-\FP\}f}_{\si} \norm{\nabla_x^{|\alpha|+1}f}_{\sigma}
\\&\le  \eta\norm{\partial _{\beta }^{\alpha }\{\FI-\FP\}f}_{\si}^2+C_{l,m,\eta}\norm{\nabla_x^{|\alpha|+1}f}_{\sigma}^2.
\end{split}
\end{equation}

Consequently, collecting the estimates
for $I_1\sim I_8$, we thus conclude our lemma.\end{proof}

We now present the
\begin{proof}[Proof of Proposition \ref{energy estimate}]
For $m\ge 2$, we define
\begin{equation}
\begin{split}
\widetilde{\mathcal{E}}_{m,l,q}(f)&=\int \frac{|f|^2}{2}+\int |\na_x \phi|^2+\sum_{1\le k\le m}\left(\int \sum_{\pm}e^{\pm2\phi}\frac{|\na^k f_{\pm}|^2}{2}+\int |\na^k \na_x \phi|^2\right)
\\&\quad+\sum_{0\le k\le m-1}\varepsilon_k  G^k(t)+\sum_{1\le |\al|\le m}\varepsilon_\alpha\int \sum_{\pm }\frac{e^{\pm 2(q+1)\phi }w^2|\partial ^{\alpha
}f_{\pm }|^{2}}{2}
\\&\quad+\sum_{|\alpha|+|\beta|\le m\atop |\al|\leq m-1}\varepsilon_{\alpha,\beta}\int \sum_{\pm }\frac{e^{\pm 2(q+1)\phi }w^2|\partial_{\beta} ^{\alpha
}\{\FI-\FP\}f_{\pm }|^{2}}{2}.
\end{split}
\end{equation}
By taking $\varepsilon_k$ sufficiently small, $\widetilde{\mathcal{E}}_{m,l,q}(f)$ is equivalent to our energy $\mathcal{E}_{m,l,q}(f)$ of \eqref{energy} in the sense that there exists $C_{l,m}$ such that
\begin{equation}
 C_{l,m}^{-1} \widetilde{\mathcal{E}}_{m,l,q}(f) \le \mathcal{E}_{m,l,q}(f)\le   C_{l,m} \widetilde{\mathcal{E}}_{m,l,q}(f).
\end{equation}
By further taking $\varepsilon_k, \varepsilon_\alpha, \varepsilon_{\alpha,\beta},  \eta$ sufficiently small orderly and choose $M$ sufficiently small (independent of $l$ and $m$!), we deduce Proposition \ref{energy estimate} from Lemma \ref{energy lemma 1}--\ref{energy I-P}.
\end{proof}

\section{Time decay and global solution}\label{global}

In this section, we will derive a further energy estimate which allows us to extract the strong decay rate of $\phi$. Then we can close the energy estimates in Proposition \ref{energy estimate} and thus complete the proof of Theorem \ref{main theorem}. We recall the notations $f_1=f_++f_-$ and $f_2=f_+-f_-$ for the solution $f$ to the Vlasov-Poisson-Landau system \eqref{VPL_per}. The key point is to consider the evolution of $f_2$ and $\phi$ separating from \eqref{f_1f_2 equation}:
\begin{equation}\label{f_2 equation}
\begin{split}
 &\partial_tf_2 + v\cdot\nabla_xf_2+4\nabla_x\phi\cdot v\sqrt{\mu} + \mathcal{L}_2 f_2=\Gamma_\ast(f_1,f_2)+\nabla_x\phi\cdot \left(\nabla_vf_1-vf_1\right),
\\ &-\Delta_x\phi=\int_{\r3}f_2\sqrt{\mu}\,dv.
\end{split}
\end{equation}
\begin{lemma}\label{difference energy}
Let $f_{0}\in \testF$ and assume $f$ is the solution constructed in Theorem \ref{local solution} with
$\mathcal{{E}}_{2;2,0}(f)\leq M$. There exists a function $\mathcal{E}_0^1({f_2})(t)$ with
\begin{equation}\label{f201}
\mathcal{E}_0^1({f_2})(t)\sim \sum_{k=0,1} \norm{\nabla^k f_2 }_2^2+\norm{\nabla_x\phi}_2^2
\end{equation}
such that
\begin{equation}\label{decay energy}
\frac{d}{dt}\mathcal{E}_0^1({f_2})+\sum_{k=0,1} \norm{\nabla^k f_2 }_\sigma^2+\norm{\nabla_x\phi}_2^2 \le 0.
\end{equation}
\end{lemma}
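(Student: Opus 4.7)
The plan is to exploit the structural decoupling exhibited by \eqref{f_2 equation}: the coercivity \eqref{positive L1L2} for $\mathcal{L}_2$ controls the microscopic part $\{\FI-\FP_2\}f_2$, while the Poisson coupling should deliver additional dissipation for both the hydrodynamic component $\FP_2 f_2 = \rho_2\sqrt{\mu}$ (with $\rho_2 := \int f_2\sqrt{\mu}\,dv = -\Delta_x\phi$) and the potential $\nabla_x\phi$ itself. Accordingly, I would build $\mathcal{E}_0^1(f_2)$ as the natural $L^2$ Lyapunov functional, perturbed by two small-coefficient macroscopic interaction functionals, one at each spatial regularity level $k=0,1$.

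First (basic energy), I would apply $\nabla^k$ to \eqref{f_2 equation} and pair with $\nabla^k f_2$ for $k=0,1$. The transport term vanishes by skew-symmetry, \eqref{positive L1L2} supplies a microscopic dissipation $\delta\|\nabla^k\{\FI-\FP_2\}f_2\|_\sigma^2$, and the key coupling term is converted to an exact time derivative. Indeed, testing \eqref{f_2 equation} against $\sqrt{\mu}$ yields the continuity identity $\partial_t\rho_2 + \nabla_x\cdot J_2 = 0$, with $J_2 := \int v\sqrt{\mu}\,f_2\,dv$, and combining with $-\Delta\phi = \rho_2$ gives
\[
4\int \nabla^k \nabla_x\phi\cdot\nabla^k J_2\,dx = 2\tfrac{d}{dt}\|\nabla^k\nabla_x\phi\|_2^2.
\]
The nonlinear contributions $\Gamma_\ast(f_1,f_2)$ and $\nabla_x\phi\cdot(\nabla_v f_1 - v f_1)$ are handled by Lemma \ref{nonlinear c}, Sobolev embedding, and the smallness $\mathcal{E}_{2;2,0}(f)\le M$, producing errors of size $\sqrt{M}$ times the target dissipation.

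Second (interaction functional), I would derive the macroscopic momentum equation by projecting \eqref{f_2 equation} on $v\sqrt{\mu}$:
\[
\partial_t J_2 + c_1\nabla_x\rho_2 + c_0\nabla_x\phi = -\nabla_x\cdot\tilde{m} - \langle\mathcal{L}_2 f_2,\, v\sqrt{\mu}\rangle + \text{NL},
\]
where $\tilde{m}$ is a microscopic second moment of $\{\FI-\FP_2\}f_2$, the hydrodynamic diagonal $c_1\rho_2 I$ has already been pulled out, and the collision term is microscopic because $\mathcal{L}_2(v\sqrt{\mu})\perp N(\mathcal{L}_2)=\mathrm{span}\{\sqrt{\mu}\}$. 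For $k=0,1$, I would then apply $\nabla^k$ and pair with $\nabla^k\nabla_x\phi$. Integration by parts, together with $-\Delta\phi=\rho_2$, converts the two leading linear terms into $c_1\|\nabla^k\rho_2\|_2^2 + c_0\|\nabla^{k+1}\phi\|_2^2$, while $\tfrac{d}{dt}(\nabla^k J_2,\nabla^k\nabla_x\phi)$ is absorbed into the modified energy and the reciprocal error $(\nabla^k J_2,\partial_t\nabla^k\nabla_x\phi)$ is controlled via $\|\partial_t\nabla_x\phi\|_2\lesssim\|J_2\|_2\lesssim\|\{\FI-\FP_2\}f_2\|_\sigma$ (continuity plus Poisson); the remaining microscopic and nonlinear terms absorb as before.

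Finally, I would set
\[
\mathcal{E}_0^1(f_2) := \sum_{k=0,1}\Bigl(\tfrac12\|\nabla^k f_2\|_2^2 + 2\|\nabla^k\nabla_x\phi\|_2^2\Bigr) + \epsilon_0(J_2,\nabla_x\phi) + \epsilon_1(\nabla_x J_2,\nabla_x^2\phi).
\]
Cauchy--Schwarz together with $\|J_2\|_2\lesssim\|f_2\|_2$ and $\|\nabla^2\phi\|_2 = \|\rho_2\|_2\lesssim\|f_2\|_2$ gives the equivalence \eqref{f201}. Summing the estimates from the previous two steps and choosing $\epsilon_0,\epsilon_1$ small, then $M$ small, in hierarchical order, the microscopic coercivity dissipation absorbs the cross-terms from the interaction functionals and the nonlinear contributions absorb as $\sqrt{M}\cdot(\text{dissipation})$; combining with $\|\FP_2 f_2\|_\sigma^2\sim\|\rho_2\|_2^2$ and the analogous statement at one spatial derivative, this yields the full $\sum_{k=0,1}\|\nabla^k f_2\|_\sigma^2 + \|\nabla_x\phi\|_2^2$ on the dissipative side. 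I expect the main obstacle to lie precisely in this final absorption: the cross-terms generated by the interaction functionals must be absorbed strictly by the $\mathcal{L}_2$-coercivity dissipation (whose coefficient $\delta_0$ is independent of $\epsilon_0,\epsilon_1$), not by the hydrodynamic dissipation we are trying to create, which forces a careful ordering in the choice of $\epsilon_0,\epsilon_1$ before the smallness $M$ is invoked.
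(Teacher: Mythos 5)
Your proposal is essentially the same as the paper's, and correct. Both start from the weighted $\nabla^k$ ($k=0,1$) energy identity for \eqref{f_2 equation}, using the continuity identity $\partial_t\rho_2+\nabla_x\cdot J_2=0$ together with $-\Delta\phi=\rho_2$ to convert the coupling term into $\frac{d}{dt}\|\nabla^k\nabla_x\phi\|_2^2$, and then perturb the energy by a small interaction functional that supplies the hydrodynamic and potential dissipation. The paper invokes the generic macroscopic interaction functional $G_{f_2}^0$ from Guo's Proposition~16 (which is built from the local conservation laws and macro--micro equations) and then adds the observation that $\|{\bf P_2}f_2\|_2^2\lesssim\|\rho_2\|_2^2=\|\Delta\phi\|_2^2\lesssim\|\nabla_x\phi\|_2^2+\|\nabla_x{\bf P_2}f_2\|_2^2$ lets one slot $\|{\bf P_2}f_2\|_2^2$ into the dissipation; you instead construct the interaction functional explicitly as $\epsilon_0(J_2,\nabla_x\phi)+\epsilon_1(\nabla_x J_2,\nabla_x^2\phi)$, exploiting that $N(\mathcal{L}_2)=\mathrm{span}\{\sqrt{\mu}\}$ so the only macroscopic moment is $\rho_2$ and the momentum equation alone closes the loop. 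Your version is a self-contained specialization of the same mechanism, and your attention to the order of smallness (the $\mathcal{L}_2$-coercivity constant being independent of $\epsilon_0,\epsilon_1$, and $M$ chosen last) matches what the paper does implicitly when it invokes ``$\varepsilon$ sufficiently small'' before appealing to $\mathcal{E}_{2;2,0}(f)\le M$.
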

\begin{proof}
The standard $\na^k$ energy estimates on \eqref{f_2 equation} yields for $k=0,1$,
\begin{equation}\label{4100}
\begin{split}
& \frac{d}{dt}\left(\frac{\norm{\nabla^k f_2 }_2^2}{2}+2\norm{\nabla^k \nabla_x\phi}_2^2\right) +\lambda\norm{\nabla^k\{{\bf I-P_2}\}f_2}_\sigma^2 \\&\quad\lesssim
\sum_{0\le j\le k} \left|\left(  \Gamma_\ast(\nabla^{j}f_1 ,\nabla^{k-j} f_2)+ \nabla^{ j}\nabla_x\phi\cdot \nabla^{k-j}(\nabla_v f_1-v f_1),\nabla^k f_2\right )\right|
:=I_1+I_2.
\end{split}
\end{equation}

Our goal is to conclude an energy estimate for $f_2$ and $\phi$ solely that excludes $f_1$. So when estimating $I_1$ and $I_2$ we will bound the $f_1$-related factors by the energy rather than the dissipation. More precisely, applying the estimate \eqref{ga es 0} of Lemma \ref{nonlinear c}, we have
\begin{equation}\label{4101}
I_1\lesssim \sum_{0\le j\le k}\int\norms{\nabla^j f_1}_2\norms{\nabla^{k-j} f_2}_\sigma \norms{\nabla^k f_2}_\sigma\,dx
\lesssim \sqrt{\mathcal{{E}}_{2;2,0}(f)}\norm{\nabla^k f_2}_\sigma^2.
\end{equation}
Here in the $x$ integration of \eqref{4101}, when $j=0$ we have taken $L^\infty-L^2-L^2$; and if $k=1$ and $j=1$ we have taken $L^3-L^6-L^2$ respectively. While we use an integration by parts in $v$ and recall \eqref{sigma norm =} to have
\begin{equation}\label{4102}
\begin{split}
I_2& =\sum_{0\le j\le k}\left|\left(\nabla^{k-j} f_1 \nabla^{j} \nabla_x\phi,  \nabla^{k}(\nabla_vf_2+v  f_2)\right)\right|
\\&\lesssim \sum_{0\le j\le k} \int \norms{\langle v\rangle^{3/2} \nabla^{k-j}f_1}_2\norms{\nabla^{ j}\nabla_x\phi}\norms{\nabla^{k}f_2}_\sigma\,dx
\\&\lesssim \sqrt{\mathcal{{E}}_{2;2,0}(f)} \norm{\nabla_x\phi}_{H^2}\norm{\nabla^{k}f_2}_\sigma
\\&\lesssim {\sqrt{\mathcal{{E}}_{2;2,0}(f)} } \left(\norm{\nabla_x\phi}_{H^2}^2+\norm{\nabla^{k}f_2}_\sigma^2\right).
\end{split}
\end{equation}
Here when $j=0$ we have taken $L^2-L^\infty-L^2$; and if $k=1$ and $j=1$ we have taken $L^4-L^4-L^2$ in the $x$-integration of \eqref{4102} respectively. So, we may conclude from \eqref{4100} that
\begin{equation}\label{decay en 1}
\begin{split}
& \frac{d}{dt}\left(\frac{\norm{\nabla^k f_2 }_2^2}{2}+2\norm{\nabla^k \nabla_x\phi}_2^2\right) +\lambda\norm{\nabla^k\{{\bf I-P_2}\}f_2}_\sigma^2 \\&\quad\lesssim \sqrt{\mathcal{{E}}_{2;2,0}(f)}\left(\sum_{k=0,1}\norm{\nabla^k f_2}_\sigma^2+\norm{\nabla_x\phi}_2^2\right).
\end{split}
\end{equation}
Here we have used the elliptic estimate on the Poisson equation to have
\begin{equation}
\norm{\nabla_x\phi}_{H^2}^2
\lesssim \norm{\nabla_x\phi}_2^2 +\norm{\nabla_x f_2}_\sigma^2.
\end{equation}

On the other hand, similarly as Lemma \ref{inter lemma}, there exists a function
$G_{f_2}^0(t)$ with
\begin{equation}\label{G1f_2}
G_{f_2}^0(t)\lesssim \norm{f_2}_2^2+\norm{\nabla_x f_2}_2^2+\norm{\na_x\phi}_2^2
\end{equation}
such that
\begin{equation}\label{4103}
\begin{split}
& \frac{d}{dt}G_{f_2}^0(t)+ \norm{ {\bf P_2} f_2}_{2}^2 +\norm{\nabla_x  {\bf P_2} f_2}_{2}^2+\norm{ \nabla_x\phi}_{2}^2
\\&\quad\lesssim \norm{  \{{\bf I- P_2}\} f_2}_{\sigma}^2+\norm{\nabla_x \{{\bf I- P_2}\} f_2}_{\sigma}^2+\norm{ N_\parallel}_{2}^2.
\end{split}
\end{equation}
The key point in \eqref{4103} is that we can include the term $\norm{ {\bf P_2} f_2}_{2}^2$, which follows by
\begin{equation}
\norm{ {\bf P_2} f_2}_{2}^2\lesssim\norm{ \rho}_{2}^2= \norm{ \Delta\phi}_{2}^2 \lesssim \norm{ \nabla_x\phi}_{2}^2+\norm{\nabla_x  {\bf P_2} f_2}_{2}^2.
\end{equation}
The last term in \eqref{4103} is bounded by
\begin{equation}
\begin{split}
\norm{ N_\parallel}_{2}^2&\equiv\norm{\langle \Gamma_\ast(f_1,f_2)+\nabla_x\phi\cdot \left(\nabla_vf_1-vf_1\right), v\sqrt{\mu}\rangle}_2^2
\\&\lesssim {\mathcal{{E}}_{2;2,0}(f)}\left(\norm{f_2}_\sigma^2+\norm{\nabla_x\phi}_2^2\right).
\end{split}
\end{equation}
Hence, \eqref{4103} implies that
\begin{equation}\label{decay en 2}
\begin{split}
& \frac{d}{dt}G_{f_2}^0(t)+\norm{ {\bf P_2} f_2}_{ 2}^2 +\norm{\nabla_x  {\bf P_2} f_2}_{ 2}^2+\norm{ \nabla_x\phi}_{ 2}^2
\\& \quad\lesssim  \norm{  \{{\bf I- P_2}\} f_2}_{\sigma}^2+\norm{\nabla_x \{{\bf I- P_2}\} f_2}_{\sigma}^2+\sqrt{\mathcal{{E}}_{2;2,0}(f)}\left(\norm{f_2}_\sigma^2+\norm{\nabla_x\phi}_2^2\right).
\end{split}
\end{equation}

Now we define
\begin{equation}
\mathcal{E}_0^1({f_2})(t):=\sum_{k=0,1}\left(\frac{\norm{\nabla^k f_2 }_2^2}{2}+2\norm{\nabla^k \nabla_x\phi}_2^2\right) -\varepsilon G_{f_2}^0(t).
\end{equation}
By \eqref{G1f_2}, for $\varepsilon$ sufficiently small, we deduce that $\mathcal{E}_0^1({f_2})(t)$ satisfies \eqref{f201} and that \eqref{decay energy} follows from \eqref{decay en 1} and \eqref{decay en 2} since $\mathcal{{E}}_{2;2,0}(f)\le M$ is small.
\end{proof}

We now establish the crucial strong decay rate of $\phi$ in the following proposition.
\begin{proposition}\label{decay pro}
Let $f_{0}\in \testF$ and assume $f$ is the solution constructed in Theorem \ref{local solution} with
$\mathcal{{E}}_{2;2,0}(f)\leq M$. Then there exists $C_l>0$ such that
\begin{equation}\label{polynomial decay1}
\begin{split}
&\norm{\partial _{t}\phi (t)}_{\infty }+\norm{\nabla _{x}\phi (t)}_{\infty}+\norm{\nabla _{x}\phi (t)}_{2}+\sum_{k=0,1} \norm{\na^k f_2(t)}_2
\\&\quad \leq
 C_{l}(1+t)^{-2l+2} \sup_{0\le \tau\le T}\sqrt{\mathcal{ {E}}_{2;l,0}(f(\tau))},
 \end{split}
\end{equation}
and
\begin{equation}\label{exponential decay1}
\begin{split}
&\norm{\partial _{t}\phi (t)}_{\infty }+\norm{\nabla _{x}\phi (t)}_{\infty}+\norm{\nabla _{x}\phi (t)}_{2}+\sum_{k=0,1} \norm{\na^k f_2(t)}_2
\\&\quad \le
 C_{l}e^{-C_lt^{2/3}} \sup_{0\le \tau\le T}\sqrt{\mathcal{ {E}}_{2;l,q}(f(\tau))}\text{ for }0<q\ll 1.
  \end{split}
\end{equation}
\end{proposition}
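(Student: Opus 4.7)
The plan is to exploit Lemma \ref{difference energy} and extract the polynomial (respectively sub-exponential) decay of $\mathcal{E}_0^1(f_2)$ via the Strain-Guo interpolation among velocity moments \cite{SG06} (respectively the velocity-time splitting of \cite{SG08}). Throughout, the a priori boundedness of the higher-$l$ energy $\sup_{0\le\tau\le T}\mathcal{E}_{2;l,0}(f(\tau))$ (respectively $\sup_{0\le\tau\le T}\mathcal{E}_{2;l,q}(f(\tau))$ in the weighted case) supplies the required weighted $L^2_v$ bounds on $\nabla^k f_2$ that compensate for the degeneracy of the $\sigma$-norm.

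Starting from Lemma \ref{difference energy}, I have $\tfrac{d}{dt}\mathcal{E}_0^1(f_2)+\mathcal{D}_{f_2}\le 0$ with $\mathcal{D}_{f_2}\equiv\sum_{k=0,1}\norm{\nabla^k f_2}_\sigma^2+\norm{\nabla_x\phi}_2^2$. The key interpolation is, for $k=0,1$,
\begin{equation*}
\norm{\nabla^k f_2}_2^2\le C\,\norm{\nabla^k f_2}_\sigma^{2\theta_k}\,\norm{\langle v\rangle^{2(l-k)}\nabla^k f_2}_2^{2(1-\theta_k)},\qquad \theta_k=\frac{4(l-k)}{4(l-k)+1},
\end{equation*}
obtained by writing the integrand as $(\langle v\rangle^{-1/2}|\nabla^k f_2|)^{2\theta_k}(\langle v\rangle^{2(l-k)}|\nabla^k f_2|)^{2(1-\theta_k)}$ (the $\langle v\rangle$ exponents balance to zero), applying H\"older in $(x,v)$, and invoking \eqref{sigma norm =}. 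Since $\norm{\langle v\rangle^{2(l-k)}\nabla^k f_2}_2^2\le\mathcal{E}_{2;l,0}(f)$, I will take the worst exponent $\theta\equiv\theta_1=\tfrac{4(l-1)}{4l-3}$ and, using that $\norm{\nabla_x\phi}_2^2\le\mathcal{E}_{2;l,0}(f)$ together with concavity of $x\mapsto x^\theta$, deduce
\begin{equation*}
\mathcal{E}_0^1(f_2)\le C_l\bigl(\mathcal{E}_{2;l,0}(f)\bigr)^{1-\theta}\mathcal{D}_{f_2}^{\,\theta}.
\end{equation*}
Since $1/\theta-1=\tfrac{1}{4(l-1)}$, substitution back into Lemma \ref{difference energy} yields the Bernoulli-type ODE
\begin{equation*}
\frac{d}{dt}\mathcal{E}_0^1(f_2)+\frac{C_l}{\bigl(\sup_\tau\mathcal{E}_{2;l,0}(f(\tau))\bigr)^{1/(4(l-1))}}\bigl(\mathcal{E}_0^1(f_2)\bigr)^{1+\frac{1}{4(l-1)}}\le 0,
\end{equation*}
which integrates to $\mathcal{E}_0^1(f_2)(t)\le C_l\,\sup_\tau\mathcal{E}_{2;l,0}(f(\tau))\,(1+t)^{-4(l-1)}$; by \eqref{f201} this is precisely the claimed $(1+t)^{-2(l-1)}$ rate for $\norm{\nabla_x\phi}_2$ and $\norm{\nabla^k f_2}_2$ with $k=0,1$. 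The $L^\infty_x$ bounds on $\nabla_x\phi$ and $\partial_t\phi$ follow from the 3D Sobolev embedding $H^2(\r3)\hookrightarrow L^\infty(\r3)$ combined with the Poisson equation $-\Delta\phi=\int\sqrt{\mu}f_2\,dv$ and the continuity equation $\partial_t\rho+\nabla_x\cdot J=0$ (giving $-\Delta\partial_t\phi=-\nabla_x\cdot J$ with $J=\int v\sqrt{\mu}f_2\,dv$). Standard elliptic estimates and Cauchy-Schwarz in $v$ give $\norm{\nabla_x\phi}_\infty+\norm{\partial_t\phi}_\infty\lesssim\norm{\nabla_x\phi}_2+\norm{f_2}_2+\norm{\nabla_x f_2}_2$, which inherits the same $(1+t)^{-2(l-1)}$ decay.

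For the sub-exponential decay \eqref{exponential decay1} with $0<q\ll1$, the moment interpolation is replaced by a velocity-time splitting: for any $R>0$ and $k=0,1$, splitting the $v$-integral into $\{|v|\le R\}$ and $\{|v|>R\}$ gives
\begin{equation*}
\norm{\nabla^k f_2}_2^2\le C\langle R\rangle\norm{\nabla^k f_2}_\sigma^2+e^{-qR^2}\norm{e^{q|v|^2/2}\nabla^k f_2}_2^2,
\end{equation*}
where the last factor is $\le\mathcal{E}_{2;l,q}(f)$ since $w(\alpha,0)$ contains $e^{q|v|^2/2}$. Combining with Lemma \ref{difference energy} yields
\begin{equation*}
\frac{d}{dt}\mathcal{E}_0^1(f_2)+\frac{\lambda}{\langle R\rangle}\,\mathcal{E}_0^1(f_2)\le C\,\mathcal{E}_{2;l,q}(f)\,e^{-qR^2}.
\end{equation*}
Choosing $R(t)=\lambda_0(1+t)^{1/3}$ with $\lambda_0$ small makes $\int_0^t\langle R(\tau)\rangle^{-1}d\tau\sim(1+t)^{2/3}$ and $e^{-qR(t)^2}\sim e^{-c(1+t)^{2/3}}$; applying the Gronwall Lemma \ref{Gronwall} together with the basic time-decay estimate Lemma \ref{basic decay} (with $\varepsilon=1/3$) then gives $\mathcal{E}_0^1(f_2)(t)\lesssim\sup_\tau\mathcal{E}_{2;l,q}(f(\tau))\,e^{-C_lt^{2/3}}$, which, together with the same Poisson/continuity argument for the $L^\infty_x$ norms, yields \eqref{exponential decay1}.

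The main technical hurdle is the concavity step that converts the sum $\sum_k\norm{\nabla^k f_2}_\sigma^{2\theta_k}+\norm{\nabla_x\phi}_2^2$ (with different exponents for different components, and exponent $1$ for the $\phi$-term) into the single power $\mathcal{D}_{f_2}^\theta$; this is resolved by selecting $\theta$ as the smallest exponent $\theta_1$ and paying the excess factors $(\norm{\cdot})^{2(\theta_0-\theta_1)}$ and $\norm{\nabla_x\phi}_2^{2(1-\theta_1)}$ using the uniform-in-time boundedness of every component by $\mathcal{E}_{2;l,0}(f)$.
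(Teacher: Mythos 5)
Your proposal follows essentially the same strategy the paper itself uses for Proposition \ref{decay pro}: the differential inequality from Lemma \ref{difference energy}, the Strain--Guo moment interpolation for the polynomial rate (the paper runs this with a single uniform exponent $\theta=\tfrac{4(l-\ell)}{4(l-\ell)+1}$ in \eqref{10101010} rather than your per-$k$ exponents $\theta_k$, but both yield the Bernoulli ODE with power $1+\tfrac{1}{4(l-1)}$), the velocity--time splitting for the stretched-exponential rate, and the Poisson/continuity equation elliptic estimates for the $L^\infty$ bounds.

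One sign error worth flagging: in the velocity--time splitting you take $R(t)=\lambda_0(1+t)^{1/3}$ with $\lambda_0$ \emph{small}, but the parameter in front of $(1+t)^{1/3}$ must be taken \emph{large} (given $0<q\ll 1$). After multiplying by the integrating factor $\exp\bigl(\tfrac{3\lambda}{2\lambda_0}(1+t)^{2/3}\bigr)$, the source contributes $\int_0^t \exp\bigl((\tfrac{3\lambda}{2\lambda_0}-q\lambda_0^2)(1+\tau)^{2/3}\bigr)\,d\tau$, and for this to stay bounded one needs $q\lambda_0^2>\tfrac{3\lambda}{2\lambda_0}$, i.e.\ $\lambda_0^3\gtrsim\lambda/q$, which forces $\lambda_0$ large when $q$ is small — exactly the paper's choice ``$\theta$ sufficiently large'' in \eqref{bbb}. (With $\lambda_0$ small one can still salvage a stretched-exponential rate by keeping the resulting polynomial prefactor, but the cleaner and intended choice is $\lambda_0$ large.) Relatedly, the paper does not invoke Lemma \ref{basic decay} at this step; it simply integrates the ODE directly after multiplying by the integrating factor, and Lemma \ref{basic decay} would not by itself produce the stretched-exponential rate.
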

\begin{proof}
Note that \eqref{decay energy} is essentially same as (150) in \cite{G12}, then we can get the same decay rate as \cite{G12}. Indeed, taking $\ell=1$ and $m=2$ in \eqref{aaa} and \eqref{bbb}, we obtain
\begin{equation}\label{polynomial decay2}
 \norm{\nabla _{x}\phi (t)}_{2}+\sum_{k=0,1} \norm{\na^k f_2(t)}_2
 \leq
 C_{l}(1+t)^{-2l+2} \sup_{0\le \tau\le T}\sqrt{\mathcal{ {E}}_{2;l,0}(f(\tau))},
\end{equation}
and
\begin{equation}\label{exponential decay2}
  \norm{\nabla _{x}\phi (t)}_{2}+\sum_{k=0,1} \norm{\na^k f_2(t)}_2
  \le
 C_{l}e^{-C_lt^{2/3}} \sup_{0\le \tau\le T}\sqrt{\mathcal{ {E}}_{2;l,q}(f(\tau))}\text{ for }0<q\ll 1.
 \end{equation}
By the Poisson equation and the continuity equation, we obtain
\begin{equation}
\norm{\nabla_x\phi}_\infty   \lesssim \norm{\nabla_x^2\phi}_2  +\norm{\nabla_x^3\phi}_2
\lesssim \norm{f_2}_2  +\norm{\nabla_x f_2}_2
\end{equation}
and
\begin{equation}
\norm{\partial_t\phi}_\infty
=\norm{\Delta^{-1}\partial_t \rho}_\infty
=\norm{\Delta^{-1}\nabla_x J}_\infty \lesssim \norm{ J}_2+\norm{ \na_x J}_2
\lesssim \norm{f_2}_2^2 +\norm{\nabla_x f_2}_2 .
\end{equation}
We thus conclude the lemma.
\end{proof}

Now we can follow exactly the same strategy of \cite{G12} to prove Theorem \ref{main theorem}.
\begin{proof}[Proof of Theorem \ref{main theorem}]
We first choose the smooth initial data $f_0\in \testF$ with $F_0=\mu+\sqrt{\mu}f_0\ge 0$.

\textit{Step 1. Global Small }$\mathcal{E}_{2;2,0}$\textit{\ Solution. }

We denote
\begin{equation}\label{t*}
T_{\ast }=\sup_{t\geq 0}\left\{  {\mathcal{E}}_{2;2,0}(f(t))+\int_{0}^{t}%
\mathcal{D}_{2;2,0}(f(\tau))d\tau\leq M \text{ and}\int_{0}^{t}\left[\norm{\partial_t\phi(\tau)}_\infty+\norm{\nabla _{x}\phi
(\tau)}_{\infty }\right]d\tau\leq \sqrt{M}\right\} .
\end{equation}%
Clearly $T_{\ast }>0$ if $\mathcal{E}_{2;2,0}(f_{0})$ is sufficiently small
from Theorem \ref{local solution}. Our goal is to show $T_{\ast }=\infty $ if
we further choose $\mathcal{E}_{2;2,0}(f_{0})$ small.

In Proposition \ref{energy estimate}, since $\int_0^{T_\ast}\left[\norm{\partial_t\phi}_\infty+\norm{\nabla _{x}\phi
(\tau)}_{\infty }+{\mathcal{D}}_{2;2,0}(f)\right]d\tau\le 1$, we use the standard Gronwall lemma (Lemma \ref{Gronwall}) to deduce from \eqref{2lq energy estimate} that
\begin{equation}
\mathcal{ {E}}_{2;l,q}(f(t))+\int_{0}^{t}\widetilde{\mathcal{D}}_{2;l,q}(f(\tau))\,d\tau
 \le C_l
\mathcal{ {E}}_{2;l,q}(f_0).
\end{equation}
By Lemma \ref{difference energy}, we can improve the inequality above to be
\begin{equation}\label{2lq}
\mathcal{ {E}}_{2;l,q}(f(t))+\int_{0}^{t} {\mathcal{D}}_{2;l,q}(f(\tau))\,d\tau
 \le C_l
\mathcal{ {E}}_{2;l,q}(f_0).
\end{equation}
Combining Proposition \ref{decay pro} and this bound with $l=2$ and $q=2$, we obtain
\begin{equation}
\int_0^t\left[\norm{\partial _{t}\phi (\tau)}_{\infty }+\norm{\nabla _{x}\phi (\tau)}_{\infty} \right]d\tau
\lesssim  \sup_{0\le \tau\le T_\ast}\sqrt{\mathcal{ {E}}_{2;2,0}(f(\tau))} \int_0^t (1+\tau)^{-2 }d\tau\lesssim \sqrt{\mathcal{ {E}}_{2;2,0}(f_0)}.
\end{equation}
Upon choosing the initial condition $\mathcal{E}_{2;2,0}(f_{0})$ further smaller, we deduce that for $0\leq t\leq T_{\ast }$,
\begin{equation}
\mathcal{ {E}}_{2;2,0}(f(t))+\int_{0}^{t} {\mathcal{D}}_{2;2,0}(f(\tau))\,d\tau
 \le \frac{M}{2}<M
\end{equation}
and
\begin{equation}
 \int_0^t\left[\norm{\partial _{t}\phi (\tau)}_{\infty }+\norm{\nabla _{x}\phi (\tau)}_{\infty} \right]d\tau
 \le \frac{\sqrt{M}}{2}<\sqrt{M}.
\end{equation}
This implies that $T_{\ast }=\infty $ and the solution is global. Also, Proposition \ref{decay pro} and \eqref{2lq} imply that the assertion (1) holds.

\textit{Step 2. Higher Moments and Higher Regularity. }

We shall prove this by an induction of the total derivatives $|\al|+|\be|=m$. By \eqref{2lq}, clearly \eqref{energy inequality m} is valid for $m=2$. Assume \eqref{energy inequality m} holds for $ m-1$.  By Proposition \ref{decay pro}, Lemma \ref{difference energy} and the induction hypothesis, we apply Lemma \ref{Gronwall} to deduce from \eqref{mlq energy estimate} that
\begin{equation}
\begin{split}
&\mathcal{ {E}}_{m;l,q}(f(t))+\int_{0}^{t} {\mathcal{D}}_{m;l,q}(f(\tau))\,d\tau
\\&\quad\le C_{l,m}\mathcal{ {E}}_{m;l,q}(f_0) \left[1+ e^{(1+\int_0^t \mathcal{D}_{m-1;l,q}(f(\tau))\,d\tau)}
\left(1+\int_0^t\mathcal{D}_{m-1;l,q}(f(\tau))\,d\tau\right)\right]
\\&\quad\le C_{l,m}\mathcal{ {E}}_{m;l,q}(f_0) \left[1+ e^{P_{m-1,l}(\mathcal{ {E}}_{m;l,q}(f_{0}) )}
P_{m-1,l}(\mathcal{ {E}}_{m;l,q}(f_{0}) )\right]
\\&\quad \equiv P_{m,l}(\mathcal{ {E}}_{m;l,q}(f_{0}) ),
\end{split}
\end{equation}
where we have used
\begin{equation}
\mathcal{ {E}}_{m-1;l,q}(f_{0})\le \mathcal{ {E}}_{m;l,q}(f_{0})\text{ and } P_{m-1,l}(\mathcal{ {E}}_{m-1;l,q}(f_{0}) )\le P_{m-1,l}(\mathcal{ {E}}_{m;l,q}(f_{0}) ).
\end{equation}
This concludes Theorem \ref{main theorem} for $f_{0}\in \testF$. For a general datum $f_{0}\in \mathcal{E}_{m;l,q}$ we can use a sequence of smooth
approximation $f_{0}^{n\text{ }}$ to construct the approximate solutions and then take a limit in $n$ to conclude the theorem.
\end{proof}

\section{Further decay}\label{decays}

In this section, we will derive some further energy estimates to deduce some further decay rates of the global solution obtained in Theorem \ref{main theorem} to complete the proof of Theorem \ref{further decay}.

Recall $f_1$ and $f_2$ again. The first lemma is a general version of Lemma \ref{difference energy} concerning with the energy estimates for the higher-order spatial derivatives of $f_2=f_+-f_-$.
\begin{lemma}\label{f_2 further en es}
Let $f_{0}\in \testF$ and assume $f$ is the solution constructed in Theorem \ref{local solution}. Then for any $\ell=1,\dots, m$ with $m\ge 2$, there exists a functional $\mathcal{E}_0^\ell(f_2)(t)$ with
 \begin{equation}\label{f_2 0l}
\mathcal{E}_0^\ell(f_2)(t)\sim \sum_{k=0}^\ell  \norm{\nabla^k f_2 }_2^2+ \norm{  \nabla_x\phi}_2^2
\end{equation}
such that
 \begin{equation}\label{f_2 inequality}
 \begin{split}
 &\frac{d}{dt}\mathcal{E}_0^\ell(f_2)+ \lambda\left(\sum_{k=0}^\ell  \norm{\nabla^k f_2 }_\sigma^2+ \norm{ \nabla_x\phi}_2^2\right)
 \\& \quad\le C_m\left(\sqrt{\mathcal{E}_{m;m+3/4,0}(f)}+\mathcal{E}_{m;m,0}(f)\right)\left(\sum_{k=0}^\ell\norm{\nabla^k f_2}_\sigma^2+\norm{ \nabla_x\phi}_2^2 \right).
 \end{split}
\end{equation}
\end{lemma}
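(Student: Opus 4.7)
The plan is to follow the same architecture as the proof of Lemma \ref{difference energy}, but now carrying out the $\nabla^k$ energy estimate on the $f_2$--$\phi$ subsystem \eqref{f_2 equation} for every $k=0,1,\dots,\ell$ and supplementing it with a macroscopic estimate that delivers control of $\|\nabla_x\phi\|_2$. The guiding principle is identical to that of Lemma \ref{difference energy}: the $f_1$--dependent factors appearing on the right-hand side must be absorbed into the \emph{energy} of the full solution (never its dissipation), so that the resulting differential inequality is truly an inequality for $f_2$ and $\phi$ alone and can later be used to generate the time decay.

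First, I apply $\nabla^k$ to \eqref{f_2 equation}, pair with $\nabla^k f_2$, and use the continuity equation together with $-\Delta_x\phi=\int f_2\sqrt{\mu}\,dv$ so that the linear coupling $4\nabla^k\nabla_x\phi\cdot v\sqrt{\mu}$ contributes $2\frac{d}{dt}\|\nabla^k\nabla_x\phi\|_2^2$, exactly as in the $k=0,1$ case. The linearized collision operator $\mathcal{L}_2$ then yields, via \eqref{positive L1L2}, the microscopic dissipation $\lambda\|\nabla^k\{\FI-\mathbf{P_2}\}f_2\|_\sigma^2$. The two remaining quantities to estimate are
\[
I_1^{(k)}=\sum_{0\le j\le k}\binom{k}{j}\bigl(\Gamma_\ast(\nabla^j f_1,\nabla^{k-j}f_2),\nabla^k f_2\bigr),\qquad
I_2^{(k)}=\sum_{0\le j\le k}\binom{k}{j}\bigl(\nabla^j\nabla_x\phi\cdot\nabla^{k-j}(\nabla_v f_1-v f_1),\nabla^k f_2\bigr).
\]
For $I_1^{(k)}$ I invoke \eqref{ga es 0} to reduce to $\int \norms{\mu^\delta\nabla^j f_1}_2\,\norms{\nabla^{k-j}f_2}_\sigma\,\norms{\nabla^k f_2}_\sigma\,dx$ and distribute the $L^p_x$ norms as in Lemma \ref{energy lemma 1}: I place $\nabla^j f_1$ in $L^\infty_x$ whenever $j\le m-2$ (Sobolev embedding into $\mathcal{E}_{m;m,0}$), and otherwise use $L^3_x$--$L^6_x$--$L^2_x$ or $L^4_x$--$L^4_x$--$L^2_x$ with the Gagliardo--Nirenberg interpolation from Lemma \ref{interpolation lemma}, always arranging the split so that the $\nabla^k f_2$ factor remains in $\|\cdot\|_\sigma$ and the $f_1$ factor is measured in the pure energy $\mathcal{E}_{m;m,0}(f)$. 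For $I_2^{(k)}$ I integrate by parts in $v$ to move $\nabla_v$ off of $f_1$ and onto $\nabla^k f_2$; the resulting bound $\int \norms{\langle v\rangle^{3/2}\nabla^{k-j}f_1}_2\,|\nabla^j\nabla_x\phi|\,\norms{\nabla^k f_2}_\sigma\,dx$ forces the weight $\langle v\rangle^{3/2}$, which is precisely why the statement is formulated with $\mathcal{E}_{m;m+3/4,0}(f)$ (giving control of $\|\langle v\rangle^{2(m+3/4-|\alpha|)}\nabla^{|\alpha|}f\|_2$ for $|\alpha|\le m$). After using the Poisson elliptic estimate $\|\nabla_x\phi\|_{H^{\ell+1}}\lesssim\|\nabla_x\phi\|_2+\sum_{k=1}^\ell\|\nabla^k f_2\|_\sigma$ to absorb the $\nabla^j\nabla_x\phi$ factor into the dissipation, both $I_1^{(k)}$ and $I_2^{(k)}$ are bounded by
\[
C_m\bigl(\sqrt{\mathcal{E}_{m;m+3/4,0}(f)}+\mathcal{E}_{m;m,0}(f)\bigr)\Bigl(\sum_{k=0}^\ell\|\nabla^k f_2\|_\sigma^2+\|\nabla_x\phi\|_2^2\Bigr),
\]
where the quadratic term $\mathcal{E}_{m;m,0}(f)$ arises after applying Young's inequality to those Leibniz pieces in which $j$ and $k-j$ are both intermediate.

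Second, to upgrade the dissipation from purely microscopic $\|\nabla^k\{\FI-\mathbf{P_2}\}f_2\|_\sigma$ to the full $\|\nabla^k f_2\|_\sigma$ plus $\|\nabla_x\phi\|_2^2$, I repeat the construction of Lemma \ref{inter lemma} but applied to the single-species operator $\mathcal{L}_2$ and the Poisson equation: for each $k=0,\dots,\ell-1$ the macro--micro equations for $f_2$ produce an interactive functional $G_{f_2}^k(t)$ satisfying $G_{f_2}^k\lesssim\|\nabla^k f_2\|_2^2+\|\nabla^{k+1}f_2\|_2^2+\|\nabla^k\nabla_x\phi\|_2^2$ and
\[
\frac{d}{dt}G_{f_2}^k+\|\nabla^{k+1}\mathbf{P_2}f_2\|_2^2+\|\nabla^k\nabla_x\phi\|_2^2\lesssim \|\nabla^k\{\FI-\mathbf{P_2}\}f_2\|_\sigma^2+\|\nabla^{k+1}\{\FI-\mathbf{P_2}\}f_2\|_\sigma^2+\|\nabla^k N_\parallel\|_2^2,
\]
with the key observation (as in the $\ell=1$ case) that $\|\mathbf{P_2}f_2\|_2\lesssim\|\Delta_x\phi\|_2\lesssim\|\nabla_x\phi\|_2+\|\nabla_x\mathbf{P_2}f_2\|_2$, which puts the hydrodynamic piece of $f_2$ under the electric control as well. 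The nonlinear source $\|\nabla^k N_\parallel\|_2^2$ is treated by the same bilinear estimates used for $I_1^{(k)}$ and $I_2^{(k)}$ and absorbs with the same prefactor.

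Finally, defining
\[
\mathcal{E}_0^\ell(f_2)(t)=\sum_{k=0}^\ell\Bigl(\tfrac12\|\nabla^k f_2\|_2^2+2\|\nabla^k\nabla_x\phi\|_2^2\Bigr)-\sum_{k=0}^{\ell-1}\varepsilon_k G_{f_2}^k(t)
\]
with $\varepsilon_k$ small enough gives the equivalence \eqref{f_2 0l}, and summing the direct $\nabla^k$ energy estimates and the macro--micro estimates with suitably small constants yields \eqref{f_2 inequality}. The main obstacle is the bookkeeping in the nonlinear estimates of $I_1^{(k)}$ and $I_2^{(k)}$ for intermediate Leibniz indices $1\le j\le k-1$ with $k$ close to $\ell$: the factor $\nabla^{k-j}f_2$ is no longer freely placeable in $L^\infty_x$ because doing so would call on $\nabla^\ell f_2\in L^\infty_x$, which sits outside the available regularity; I handle this by distributing $L^3$--$L^6$--$L^2$ or $L^4$--$L^4$--$L^2$ using Lemma \ref{interpolation lemma}, balancing the orders so that one factor is $\nabla^k f_2$ in $\|\cdot\|_\sigma$ (entering the dissipation), another is a derivative of $f_1$ controlled by $\mathcal{E}_{m;m+3/4,0}(f)$, and only spatial derivatives up to order $\ell$ of $f_2$ appear in $\|\cdot\|_\sigma$. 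Once this splitting is set, the remaining steps are routine applications of Cauchy--Schwarz and the bounds already recorded in Section \ref{pre}.
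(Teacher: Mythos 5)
Your proposal follows the paper's proof in all essentials: the $\nabla^k$ energy estimates on the $f_2$--$\phi$ subsystem \eqref{f_2 equation} for $k=0,\dots,\ell$, with the $f_1$-dependent factors routed into $\mathcal{E}_{m;m,0}(f)$ (or into $\mathcal{E}_{m;m+3/4,0}(f)$ after the $v$-integration by parts in $I_2^{(k)}$ forces the $\langle v\rangle^{3/2}$ weight, whose worst case $k=\ell=m$, $j=0$ you correctly single out), supplemented by the macro--micro interactive functionals $G_{f_2}^k$ with the Poisson-based control $\|\mathbf{P_2}f_2\|_2\lesssim\|\nabla_x\phi\|_2+\|\nabla_x\mathbf{P_2}f_2\|_2$ and a final small-parameter combination. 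One minor inaccuracy: the quadratic term $\mathcal{E}_{m;m,0}(f)$ in the final bound does not come from Young's inequality on intermediate Leibniz pieces in $I_1^{(k)}$ or $I_2^{(k)}$ (those contribute only $\sqrt{\mathcal{E}}$), but exclusively from the $\|\nabla^k N_\parallel\|_2^2$ estimate in the macro--micro part, which you do reference at the end.
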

\begin{proof}
We recall \eqref{f_2 equation}. The standard $\na^k$ energy estimates on \eqref{f_2 equation} yields for $k=0,\dots,\ell$,
\begin{equation}\label{f_2 es 1}
\begin{split}
& \frac{d}{dt}\left(\frac{\norm{\nabla^k f_2 }_2^2}{2}+2\norm{\nabla^k \nabla_x\phi}_2^2\right) +\lambda\norm{\nabla^k\{{\bf I-P_2}\}f_2}_\sigma^2 \\&\quad \le
\sum_{0\le j\le k} C_k^j\left(  \Gamma_\ast(\nabla^{j}f_1 ,\nabla^{k-j} f_2)+ \nabla^{ j}\nabla_x\phi\cdot \nabla^{k-j}(\nabla_v f_1-v f_1),\nabla^k f_2\right )
:=I_1+I_2.
\end{split}
\end{equation}

We apply the estimate \eqref{ga es 0} of Lemma \ref{nonlinear c} to have, by bounding the $f_1$-related factors via $\sqrt{\mathcal{E}_{m;m,0}(f)}$ since $m\ge 2$,
\begin{equation}\label{f_2 es 2}
I_1\lesssim \sum_{0\le j\le k}C_k^j\int\norms{\nabla^j f_1}_2\norms{\nabla^{k-j} f_2}_\sigma \norms{\nabla^k f_2}_\sigma\,dx
\le C_m \sqrt{\mathcal{E}_{m;m,0}(f)}\sum_{0\le j\le k}\norm{\nabla^j f_2}_\sigma^2.
\end{equation}
While, we have
\begin{equation}\label{f_2 es 3}
\begin{split}
I_2& =\sum_{0\le j\le k}C_k^j\left|\left(\nabla^{k-j} f_1 \nabla^{j} \nabla_x\phi,  \nabla^{k}(\nabla_vf_2+v  f_2)\right)\right|
\\&\le C_m \sum_{0\le j\le k} \int \norms{\langle v\rangle^{3/2} \nabla^{k-j}f_1}_2\norms{\nabla^{ j}\nabla_x\phi}\norms{\nabla^{k}f_2}_\sigma\,dx
\\& \le C_m\sqrt{\mathcal{E}_{m;m+3/4,0}(f)} \sum_{0\le j\le k}\norm{\nabla^j\nabla_x\phi}_{2}\norm{\nabla^{k}f_2}_\sigma
\\& \le C_m \sqrt{\mathcal{E}_{m;m+3/4,0}(f)} \left( \norm{ \nabla_x\phi}_2^2+\norm{\nabla^{k}f_2}_\sigma^2\right)
\end{split}
\end{equation}
Note that we require the extra $3/4$ so that when $k=\ell=m$ and $j=0$ we can bound by $\norm{\langle v\rangle^{3/2} \nabla^{m}f_1}_2\le \sqrt{\mathcal{E}_{m;m+3/4,0}(f)}$; otherwise, we need to restrict that $\ell\le m-1$. In particular, when $m=2$ we can only take $\ell=1$ that we did in Lemma \ref{f201}.

So, summing over $k=0,\dots,\ell$, we deduce from \eqref{f_2 es 1} that
\begin{equation}\label{33333}
\begin{split}
&\frac{d}{dt}\sum_{k=0}^\ell\left(\frac{\norm{\nabla^k f_2 }_2^2}{2}+2\norm{\nabla^k \nabla_x\phi}_2^2\right) +\lambda\sum_{k=0}^\ell\norm{\nabla^k\{{\bf I-P_2}\}f_2}_\sigma^2
\\&\quad\le C_m\sqrt{\mathcal{E}_{m;m+3/4,0}(f)}\left(\sum_{k=0}^\ell\norm{\nabla^k f_2}_\sigma^2+\norm{ \nabla_x\phi}_2^2 \right).
\end{split}
\end{equation}

On the other hand, as in Lemma \ref{difference energy}, for $k=0,\dots,\ell-1$ there exists a function
$G_{f_2}^k(t)$ with
\begin{equation} \label{22222}
G_{f_2}^k(t)\lesssim \norm{\na^k f_2}_2^2+\norm{\na^{k+1} f_2}_2^2+\norm{\na^k \na_x\phi}_2^2
\end{equation}
such that
\begin{equation} \label{22222'}
\begin{split}
& \frac{d}{dt}G_{f_2}^k + \norm{ \nabla^k{\bf P_2} f_2}_{2}^2 +\norm{\na^{k+1} {\bf P_2} f_2}_{2}^2+\norm{ \nabla^k\nabla_x\phi}_{2}^2
\\&\quad\lesssim \norm{  \nabla^k\{{\bf I- P_2}\} f_2}_{\sigma}^2+\norm{\na^{k+1} \{{\bf I- P_2}\} f_2}_{\sigma}^2+\norm{\nabla^k N_{\parallel}}_{2}^2.
\end{split}
\end{equation}
Here the last term is bounded by
\begin{equation}
\begin{split}
\norm{ \nabla^k N_{ \parallel}}_{2}^2&\equiv\norm{\langle \nabla^k\left(\Gamma_\ast(f_1,f_2)+\nabla_x\phi\cdot \left(\nabla_vf_1-vf_1\right)\right), v{\mu}\rangle}_2^2
\\&\le C_m\mathcal{E}_{m;m,0}(f)\left(\sum_{0\le j\le k}\norm{\nabla^j f_2}_\sigma^2 +\norm{\nabla_x\phi}_2^2\right).
\end{split}
\end{equation}
Hence, summing over $k=0,\dots,\ell-1$, we deduce from \eqref{22222'} that
\begin{equation}\label{44444}
\begin{split}
&\frac{d}{dt}\sum_{k=0}^{\ell-1}G_{f_2}^k(t)+\sum_{k=0}^{\ell} \norm{\nabla^k {\bf P_2} f_2}_2^2 +\norm{ \nabla_x\phi}_{2}^2
\\&\quad \lesssim  \sum_{k=0}^{\ell}\norm{ \nabla^k \{{\bf I- P_2}\} f_2}_{\sigma}^2+C_m \mathcal{E}_{m;m,0}(f)\left(\sum_{k=0}^{\ell-1}\norm{\nabla^k f_2}_\sigma^2 +\norm{\nabla_x\phi}_2^2\right).
 \end{split}
 \end{equation}

 Now, for $\ell=1,\dots,m$ we define
 \begin{equation}
\mathcal{E}_0^\ell(f_2)(t):=\sum_{k=0}^\ell\left(\norm{\nabla^k f_2 }_2^2+2\norm{\nabla^k \nabla_x\phi}_2^2\right)+\varepsilon \sum_{k=0}^{\ell-1}G_{f_2}^k.
\end{equation}
By \eqref{22222}, for $\varepsilon$ sufficiently small, we deduce that $\mathcal{E}_0^\ell({f_2})(t)$ satisfies \eqref{f_2 0l} and that \eqref{f_2 inequality} follows from \eqref{33333} and \eqref{44444}.
 \end{proof}

Next, we will derive an energy estimates which allows us to derive the optimal decay rate of $f_1$ and its higher-order spatial derivatives. We recall the evolution for $f_1$ separating from \eqref{f_1f_2 equation}:
\begin{equation}\label{f1 eq 1}
\partial_tf_1+ v\cdot\nabla_xf_1+ \mathcal{L}_1f_1=\Gamma_\ast(f_1,f_1)+\nabla_x\phi\cdot \left(\nabla_vf_2-vf_2\right).
\end{equation}
\begin{lemma}\label{f_1 further en es}
Let $f_{0}\in \testF$ and assume $f$ is the solution constructed in Theorem \ref{local solution}.
Then for any $  \ell= 0,\dots, m-1$ with $m\ge 2$, there exists a function $\mathcal{E}^m_\ell({f_1})(t)$ with
 \begin{equation}\label{E_f_1 es}
\mathcal{E}^m_\ell({f_1})(t)\sim  \sum_{ k=\ell}^m\norm{\nabla^k f_1 }_2^2
\end{equation}
such that
 \begin{equation}\label{f_1 inequality}
 \begin{split}
 &\frac{d}{dt}\mathcal{E}^m_\ell({f_1})+\lambda \left(\sum_{k=\ell+1}^m  \norm{\nabla^\ell f_1 }_\sigma^2+ \norm{\nabla^\ell \{{\bf I-P_1}\}f_1 }_\sigma^2\right)
 \\& \quad\le C_m \left(\sqrt{{\mathcal{E}_{m;m,0}(f)}}+{\mathcal{E}_{m;m+3/4,0}(f)}\right)\left(\sum_{k=\ell+1}^m \norm{\na^k f_1}_\sigma^2+\sum_{k=0}^m \norm{\nabla^k f_2}_\sigma^2+\norm{ \nabla_x\phi}_2^2\right).
 \end{split}
\end{equation}
\end{lemma}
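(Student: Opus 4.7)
The plan is to mirror the strategy of Lemma \ref{f_2 further en es}, performing the standard $\nabla^k$ energy estimate on \eqref{f1 eq 1} for $k=\ell,\dots,m$ and then augmenting it with a macro-micro interaction functional that upgrades the bare microscopic dissipation $\norm{\nabla^k \{{\bf I-P_1}\}f_1}_\sigma^2$ into the full $\norm{\nabla^k f_1}_\sigma^2$ in the higher-order range $k\ge\ell+1$. Applying $\nabla^k$ to \eqref{f1 eq 1} and pairing with $\nabla^k f_1$ in $L^2_{x,v}$ yields, via \eqref{positive L1L2},
\begin{equation*}
\tfrac{1}{2}\tfrac{d}{dt}\norm{\nabla^k f_1}_2^2+\lambda\norm{\nabla^k \{{\bf I-P_1}\}f_1}_\sigma^2\le |I_\Gamma^k|+|I_\Phi^k|,
\end{equation*}
where $I_\Gamma^k:=(\nabla^k\Gamma_\ast(f_1,f_1),\nabla^k f_1)$ and $I_\Phi^k:=(\nabla^k[\nabla_x\phi\cdot(\nabla_v f_2-vf_2)],\nabla^k f_1)$. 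The key new difficulty compared with Lemma \ref{f_2 further en es} is that at the lowest index $k=\ell$ the dissipation is purely microscopic and, moreover, the low-order factors $\norm{\nabla^j f_1}_2$ with $j<\ell$ do not belong to $\mathcal{E}^m_\ell(f_1)$; both issues must be absorbed through the full energy $\mathcal{E}_{m;m,0}(f)$ together with a collision-invariance orthogonality.

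For $I_\Gamma^\ell$, I would exploit the fact that $\int\Gamma_\ast(g,h)(v)\tilde{\mu}(v)\,dv=0$ for every $\tilde{\mu}\in N(\mathcal{L}_1)$, which gives the pointwise-in-$x$ identity
\begin{equation*}
(\nabla^\ell\Gamma_\ast(f_1,f_1),\nabla^\ell f_1)=(\nabla^\ell\Gamma_\ast(f_1,f_1),\nabla^\ell\{{\bf I-P_1}\}f_1),
\end{equation*}
so the test function is automatically the microscopic part available in the LHS dissipation. Applying the pointwise bound \eqref{ga es 0} of Lemma \ref{nonlinear c}, expanding via the product rule, and splitting $f_1={\bf P_1}f_1+\{{\bf I-P_1}\}f_1$ in the middle factor $\norms{\nabla^{\ell-j}f_1}_\sigma$, I would use H\"older and Sobolev in $x$ ($L^\infty$ on the lowest-derivative factor whenever possible, else $L^3$--$L^6$--$L^2$) to bound the ${\bf P_1}$ contribution by $\norm{\nabla^{\ell-j}{\bf P_1}f_1}_2\le\sqrt{\mathcal{E}_{m;m,0}(f)}$ and the $\{{\bf I-P_1}\}$ contribution by a genuine dissipation factor. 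For $k=\ell+1,\dots,m$ the same scheme uses $\norm{\nabla^k f_1}_\sigma$ directly in the dissipation, together with Gagliardo--Nirenberg interpolation (Lemma \ref{interpolation lemma}) to distribute derivatives. For $I_\Phi^k$, integration by parts in $v$ converts the term into
\begin{equation*}
\lesssim \sum_{0\le j\le k}C_k^j\int\norms{\langle v\rangle^{3/2}\nabla^j f_2}_2\,|\nabla^{k-j}\nabla_x\phi|\,\norms{\nabla^k f_1}_\sigma\,dx,
\end{equation*}
where $\norm{\langle v\rangle^{3/2}\nabla^j f_2}_2\lesssim\sqrt{\mathcal{E}_{m;m+3/4,0}(f)}$ accounts for the $3/4$ weight in the hypothesis, and $\norm{\nabla^k f_2}_\sigma^2$ and $\norm{\nabla_x\phi}_2^2$ sit in the RHS dissipation of \eqref{f_1 inequality}; for $k=\ell$ the factor $\norms{\nabla^\ell f_1}_\sigma$ is split as before.

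To promote $\norm{\nabla^k\{{\bf I-P_1}\}f_1}_\sigma^2$ into $\norm{\nabla^k f_1}_\sigma^2$ for $k\in[\ell+1,m]$, I would invoke the $f_1$-analogue of Lemma \ref{inter lemma}: testing \eqref{f1 eq 1} against $\sqrt{\mu},\,v\sqrt{\mu},\,|v|^2\sqrt{\mu}$ produces local conservation laws of mass, momentum and energy for $f_1$ (no Poisson coupling is present), and the macro-micro construction of \cite{G12} then provides, for each $k\in[\ell,m-1]$, a functional $G^k_{f_1}(t)$ with $|G^k_{f_1}|\lesssim\norm{\nabla^k f_1}_2^2+\norm{\nabla^{k+1}f_1}_2^2$ satisfying
\begin{equation*}
\tfrac{d}{dt}G^k_{f_1}+\norm{\nabla^{k+1}{\bf P_1}f_1}_2^2\lesssim\norm{\nabla^k\{{\bf I-P_1}\}f_1}_\sigma^2+\norm{\nabla^{k+1}\{{\bf I-P_1}\}f_1}_\sigma^2+\norm{\nabla^k N^{(1)}_\parallel}_2^2,
\end{equation*}
where $N^{(1)}_\parallel$ is the $X_v$-projection of $\Gamma_\ast(f_1,f_1)+\nabla_x\phi\cdot(\nabla_vf_2-vf_2)$, estimated in the same spirit as in Lemma \ref{difference energy}. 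Setting
\begin{equation*}
\mathcal{E}^m_\ell(f_1):=\sum_{k=\ell}^m\norm{\nabla^k f_1}_2^2+\varepsilon\sum_{k=\ell}^{m-1}G^k_{f_1}
\end{equation*}
for sufficiently small $\varepsilon>0$ gives the equivalence \eqref{E_f_1 es}, and combining with the $\nabla^k$ estimates produces $\norm{\nabla^{k+1}{\bf P_1}f_1}_2^2+\norm{\nabla^{k+1}\{{\bf I-P_1}\}f_1}_\sigma^2\sim\norm{\nabla^{k+1}f_1}_\sigma^2$ for $k=\ell,\dots,m-1$, which together with $\norm{\nabla^\ell\{{\bf I-P_1}\}f_1}_\sigma^2$ from direct coercivity furnishes the LHS of \eqref{f_1 inequality}. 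The main obstacle will be the $k=\ell$ case itself: without any Poisson coupling for $f_1$ and no Poincar\'e inequality in $\mathbb{R}^3$, there is no control of $\norm{\nabla^\ell {\bf P_1}f_1}_2$, so $\norm{\nabla^\ell f_1}_\sigma^2$ cannot appear on the right of \eqref{f_1 inequality}, and closing this case hinges entirely on the collision-invariance identity above to remove the macroscopic piece of the test function, combined with the ${\bf P_1}/\{{\bf I-P_1}\}$ splitting and $\sqrt{\mathcal{E}_{m;m,0}(f)}$ to absorb the remaining low-derivative factors.
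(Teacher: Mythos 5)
Your high-level architecture is right: a $\nabla^k$ energy estimate for $k=\ell,\dots,m$, followed by a macro-micro interaction functional $G^k_{f_1}$ ($k=\ell,\dots,m-1$) built from the $X_v$-projected equation, with $\mathcal{E}^m_\ell(f_1)=\sum_k\|\nabla^k f_1\|_2^2+\varepsilon\sum_k G^k_{f_1}$, is exactly the construction in the paper, and your treatment of $I_\Phi^k$ (integrate by parts in $v$, split $f_1={\bf P_1}f_1+\{{\bf I-P_1}\}f_1$ in the test function, put the $\langle v\rangle^{3/2}$ moment on $f_2$) is also in the right direction.

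The gap is in your treatment of $I_\Gamma^k$, in particular for $\ell\ge 1$. After collision invariance you have $\sum_j\int\norms{\mu^\delta\nabla^j f_1}_2\,\norms{\nabla^{k-j}f_1}_\sigma\,\norms{\nabla^k\{{\bf I-P_1}\}f_1}_\sigma\,dx$, and you propose to split the \emph{middle} factor into ${\bf P_1}$ and ${\bf I-P_1}$ parts and absorb the ${\bf P_1}$ piece by $\sqrt{\mathcal{E}_{m;m,0}(f)}$. But at $k=\ell$ with $j\ge1$, the middle factor is $\norms{\nabla^{\ell-j}f_1}_\sigma$ with $\ell-j<\ell$, and \emph{neither} $\norm{\nabla^{\ell-j}{\bf P_1}f_1}$ \emph{nor} $\norm{\nabla^{\ell-j}\{{\bf I-P_1}\}f_1}_\sigma$ appears in the dissipation on the RHS of \eqref{f_1 inequality} — the only admissible $f_1$-dissipation pieces are $\norm{\nabla^{\ell}\{{\bf I-P_1}\}f_1}_\sigma$ and $\norm{\nabla^k f_1}_\sigma$ for $k\ge\ell+1$. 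Absorbing the orphaned factor into the energy gives $\sqrt{\mathcal{E}}\cdot\sqrt{\mathcal{E}}\cdot\norm{\nabla^\ell\{{\bf I-P_1}\}f_1}_\sigma$, and after Young's inequality the remainder $\mathcal{E}^2$ is not a dissipation; the term does not close. The paper's fix is the Gagliardo--Nirenberg interpolation of Lemma \ref{interpolation lemma} applied simultaneously to \emph{both} factors $\norm{\nabla^j f_1}_{L^2_vL^3_x}$ and $\norm{\nabla^{k-j}f_1}_{L^2_\sigma L^6_x}$: the two interpolation exponents on $\nabla^{k+1}f_1$ sum to exactly $1$, producing $\sqrt{\mathcal{E}_{m;m,0}(f)}\,\norm{\nabla^{k+1}f_1}_\sigma$ for every $j$ and every $k\le m-1$ — the key point being that $\nabla^{k+1}f_1$ (not $\nabla^\ell f_1$) lands in the admissible dissipation range. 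This same interpolation is also needed in bounding the projected nonlinearity $\norm{\nabla^k N^{(1)}_\parallel}_2^2$ inside the $G^k_{f_1}$ step, which your proposal again handles by a crude split that would leave low-order $f_1$-dissipation terms uncontrolled. Without the interpolation-to-$\nabla^{k+1}$ device (which is precisely the ``minimum spatial derivative count'' idea advertised in the introduction), the lowest-level case does not close.
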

\begin{proof}
The standard $\na^k$ energy estimates on \eqref{f1 eq 1} yields for $k=\ell,\dots,m$,
\begin{equation}\label{f_1 es 1}
\begin{split}
& \frac{d}{dt} \frac{\norm{\nabla^k f_1 }_2^2}{2} +\lambda\norm{\nabla^k\{{\bf I-P_1}\}f_1}_\sigma^2 \\&\quad=
\sum_{0\le j\le k} C_k^j\left(  \Gamma_\ast(\nabla^{j}f_1 ,\nabla^{k-j} f_1)+ \nabla^{j}\nabla_x\phi\cdot \nabla^{k-j}(\nabla_v f_2-vf_2),\nabla^k f_1\right ):=I_1+I_2
.
\end{split}
\end{equation}

By the collision invariant property and the estimate \eqref{ga es 0} of Lemma \ref{nonlinear c}, we obtain
\begin{equation}\label{term1}
\begin{split}
\left|\left(  \Gamma_\ast(\nabla^{j}f_1 ,\nabla^{k-j} f_1) ,\nabla^k f_1\right )\right|&=\left|
 \left(\Gamma_\ast(\nabla^{j}f_1 ,\nabla^{k-j} f_1) ,\nabla^k\{{\bf I-P_1}\} f_1\right )\right|
\\&\lesssim \norm{|\mu^\delta \nabla^j f_1|_2|\nabla^{k-j}f_1|_\sigma}_2 \norm{\nabla^k \{{\bf I-P_1}\}f_1}_\sigma.
\end{split}
\end{equation}
Our goal is to finely estimate the right hand side of \eqref{term1} so that
it can be bounded by the right hand side of \eqref{f_1 inequality}. The crucial point is to use the Sobolev interpolation of Lemma
\ref{interpolation lemma}. Indeed, by H\"older's inequality, Minkowski's integral inequality \eqref{minkowski inequality} of Lemma \ref{Minkowski} and  Lemma
\ref{interpolation lemma}, we obtain that if $k=\ell,\dots,m-1$,
\begin{equation}\label{0000}
\begin{split}
 \norm{|\mu^\delta\nabla^j f_1|_2| \nabla^{k-j}f_1|_\sigma}_2
 & \lesssim \norm{\mu^\delta \nabla^j f_1}_{L^3_xL^2_v}\norm{
\nabla^{k-j}f_1}_{L^6_xL^2_\sigma}\lesssim \norm{\nabla^j f_1}_{L^2_v L^3_x}\norm{\nabla^{k-j}f_1}_{L^2_{\sigma}L^6_x}
\\& \lesssim \norm{\mu^\delta \nabla^\zeta f_1}_2^{1-\frac{j}{k+1}}\norm{\mu^\delta\nabla^{k+1}f_1}_2^{\frac{j}{k+1}}
\norm{f_1}_\sigma^{\frac{j}{k+1}}\norm{\nabla^{k+1}f_1}^{1-\frac{j}{k+1}}_\sigma
\\& \lesssim \sqrt{\mathcal{E}_{m;m,0}(f)}\norm{\nabla^{k+1}f_1}_\sigma,
\end{split}
\end{equation}
where $\zeta$ comes from the adjustment of the power index over $\norm{\nabla^{k+1}f_1}_\sigma$ and is defined by
\begin{equation}
j+3\left(\frac{1}{2}-\frac{1}{3}\right) = \zeta \times \left(1-\frac{j}{k+1}\right)
+(k+1)\times \frac{j}{k+1}
\Longrightarrow\zeta=\frac{k+1}{2(k+1-j)}.
\end{equation}
Note that we have used $\norm{ f_1}_\sigma\le \sqrt{\mathcal{E}_{m;m,0}(f)}$ since $m\ge 2$. Now let $k=m$. If $j=0$ we take $L^\infty-L^2$ to have
\begin{equation}
 \norm{\norms{\mu^\delta  f_1}_2\norms{\nabla^{m }f_1}_\sigma}_2\lesssim  \sqrt{\mathcal{E}_{2;2,0}(f)}\norm{\nabla^m f_1}_\sigma.
 \end{equation}
If $j=m$ we take $L^\infty-L^2$  to have
\begin{equation}
 \norm{|\mu^\delta\nabla^m  f_1|_2|  f_1|_\sigma}_2
 \lesssim  \norm{\mu^\delta\nabla^m f_1}_2 \sum_{j=1,2}\norm{\nabla^j f_1}_\sigma;
\end{equation}
if $m=2$, then we further bound the above by
\begin{equation}
\norm{\mu^\delta\nabla^2 f_1}_2 \norm{\nabla  f_1}_\sigma+\norm{\mu^\delta\nabla^2 f_1}_2 \norm{\nabla^2  f_1}_\sigma
\lesssim \sqrt{\mathcal{E}_{2;2,0}(f)}\norm{ \nabla^2 f_1}_\sigma.
\end{equation}
If $m\ge 3$, then we bound it instead by
\begin{equation}
\norm{\mu^\delta\nabla^3 f_1}_2 \sum_{j=1,2}\norm{\nabla^j f_1}_\sigma\lesssim \sqrt{\mathcal{E}_{3;3,0}(f)}\norm{\nabla^3 f_1}_\sigma.
\end{equation}
Here we have used $\norm{ \na^j f_1}_\sigma\le \sqrt{\mathcal{E}_{m;m,0}(f)}$ for $1\le j\le m-1$.
Now if $1\le j\le m-1$, similarly as in \eqref{0000}, we obtain
\begin{equation}
\begin{split}
\norm{|\mu^\delta\nabla^j f_1|_2| \nabla^{m-j}f_1|_\sigma}_2
 &\lesssim \norm{\mu^\delta \nabla^j f_1}_{L^2_v L^3_x}\norm{\nabla^{m-j}f_1}_{L^2_{\sigma}L^6_x}
\\& \lesssim \norm{\mu^\delta \nabla^\zeta f_1}_2^{1-\frac{j-1}{m}}\norm{\mu^\delta\nabla^{m}f_1}_2^{\frac{j-1}{m}}
\norm{f_1}_\sigma^{\frac{j-1}{m}}\norm{\nabla^mf_1}^{1-\frac{j-1}{m}}_\sigma
\\& \lesssim \sqrt{\mathcal{E}_{m;m,0}(f)}\norm{\nabla^m f_1}_\sigma,
\end{split}
\end{equation}
where $\zeta$  is defined by
\begin{equation}
j+3\left(\frac{1}{2}-\frac{1}{3}\right) = \zeta \times \left(1-\frac{j-1}{m}\right)
+m\times \frac{j-1}{m}\Longrightarrow\zeta=\frac{3m}{2(m+1-j)}.
\end{equation}
Hence, we conclude that for $k=\ell,\dots,m-1$,
\begin{equation} \label{f_1 es 2}
I_1\le \eta\norm{\nabla^k \{{\bf I-P_1}\}f_1}_\sigma^2+ C_{m,\eta} {\mathcal{E}_{m;m,0}(f)}\norm{\nabla^{k+1}f_1}_\sigma^2
\end{equation}
and for $k=m$,
\begin{equation}\label{f_1 es 3}
I_1\le \eta\norm{\nabla^m \{{\bf I-P_1}\}f_1}_\sigma^2+ C_{m,\eta}  {\mathcal{E}_{m;m,0}(f)}\norm{\nabla^m f_1}_\sigma^2.
\end{equation}

For the term $I_2$, we integrate by parts in $v$ and use the split $f_1={\bf P_1} f_1+\{{\bf I-P_1}\}f_1$ to have
\begin{equation}\label{f_1 es 4}
\begin{split}
I_2& =-\sum_{0\le j\le k}C_k^j \left(\na^{k-j} f_2  \na^{j}\nabla _{x}\phi, \na^k\left(\nabla _{v}f_1+vf_1\right) \right)
\\&\le  \sum_{0\le j\le k}C_k^j \int \left\{\norms{\mu^\delta \nabla^{k-j}f_2}_2\norms{\nabla^{ j}\nabla_x\phi}\norms{\mu^\delta\nabla^{k}  f_1}_2\right.
\\&\qquad\qquad\qquad\ \left.+\norms{\langle v\rangle^{3/2} \nabla^{k-j}f_2}_2\norms{\nabla^{ j}\nabla_x\phi}\norms{\nabla^{k}\{{\bf I-P_1}\} f_1}_\sigma\right\}dx
\\&\le C_m \left(\sqrt{{\mathcal{E}_{m;m,0}(f)}}+C_\eta{\mathcal{E}_{m;m+3/4,0}(f)}\right)\sum_{0\le j\le k}\left(\norm{\nabla^j\nabla_x\phi}_{2}^2+\norm{\nabla^j f_2}_{\sigma}^2\right)
\\&\quad+\eta\norm{\nabla^{k}\{{\bf I-P_1}\} f_1}_\sigma^2.
\end{split}
\end{equation}

So, summing over $k=\ell,\dots,m$, by \eqref{f_1 es 2}--\eqref{f_1 es 4}, we deduce from \eqref{f_1 es 1} that
\begin{equation}\label{77777}
\begin{split}
& \frac{d}{dt}\sum_{k=\ell}^m\frac{\|\nabla^k f_1 \|_2^2}{2}  + \lambda \sum_{k=\ell}^m\norm{\nabla^k\{{\bf I-P_1}\}f_1}_\sigma^2
\\& \quad\le C_m \left(\sqrt{{\mathcal{E}_{m;m,0}(f)}}+{\mathcal{E}_{m;m+3/4,0}(f)}\right)\left(\sum_{k=\ell+1}^m \norm{\na^k f_1}_\sigma^2+\sum_{k=0}^m \norm{\nabla^k f_2}_\sigma^2+\norm{ \nabla_x\phi}_2^2\right).
\end{split}
\end{equation}

On the other hand, for $k=\ell,\dots,m-1$, as before there exists a function
$G_{f_1}^k(t)$ with
\begin{equation} \label{66666}
G_{f_1}^k(t)\lesssim \norm{\na^k f_1}_2^2+\norm{\na^{k+1} f_1}_2^2
\end{equation}
such that
\begin{equation}
\frac{d}{dt}G_{f_1}^k+\norm{ \na^{k+1}  {\bf P_1} f_1}_{2}^2
\lesssim \norm{\na^k \{{\bf I- P_1}\} f_1}_{\sigma}^2+\norm{ \na^{k+1} \{{\bf I- P_1}\} f_1}_{\sigma}^2+\norm{\na^k N_{\parallel} }_{2}^2.
\end{equation}
Here the last term is bounded by
 \begin{equation}
\begin{split}
\norm{\na^k N_{\parallel} }_{2}^2&
\le  \sum_{0\le j\le k}C_k^j\norm{\langle \Gamma_\ast(\na^j f_1,\na^{k- j}f_1)+\na^ j\nabla_x\phi\cdot  \na^{k- j}(\nabla_v f_2-vf_2), \tilde{\mu}\rangle}_{2}^2
\\&\le  \sum_{0\le j\le k}C_k^j\norm{\norms{ \mu^\delta\na^ j f_1}_2\norms{\na^{k- j}f_1}_\sigma}_2^2
+\norm{  \norms{\na^j \nabla_x\phi}\norms{\mu^\delta\na^{k-j}f_2}_2}_{2}^2
\\&\le C_m  {\mathcal{E}_{m;m,0}(f)}\left(\norm{\nabla^{k+1}f_1}_\sigma^2+ \sum_{0\le j\le k}\left(\norm{\nabla^j\nabla_x\phi}_{2}^2+\norm{\nabla^j f_2}_{\sigma}^2\right)\right),
\end{split}
\end{equation}
where we have used \eqref{0000}. Hence, summing over $k=\ell,\dots,m-1$, we deduce
\begin{equation}\label{88888}
\begin{split}
&\frac{d}{dt}\sum_{k=\ell}^{m-1}G_{f_1}^k(t)+\sum_{k=\ell+1}^{m} \norm{\nabla^k {\bf P_1} f_1}_2^2
\\& \quad\lesssim \sum_{k=\ell}^{m}\norm{  \na^k \{{\bf I- P_1}\} f_1}_{\sigma}^2+C_m{\mathcal{E}_{m;m,0}(f)}\left(\sum_{k=\ell+1}^{m}\norm{\nabla^{k}f_1}_\sigma^2+\sum_{k=0}^{m-1} \norm{\nabla^k f_2}_\sigma^2+\norm{ \nabla_x\phi}_2^2\right).
 \end{split}
 \end{equation}

 Now, for $\ell=0,\dots,m-1$, we define
 \begin{equation}
 \mathcal{E}^m_\ell({f_1})(t):= \sum_{k=\ell}^m \frac{\norm{\nabla^k f_1 }_2^2}{2}+\varepsilon \sum_{k=\ell}^{m-1}G_{f_1}^k.
\end{equation}
By \eqref{66666}, for $\varepsilon$ sufficiently small, we deduce that $\mathcal{E}_\ell^m({f_1})(t)$ satisfies \eqref{E_f_1 es} and that \eqref{f_1 inequality} follows from \eqref{77777} and \eqref{88888}.
 \end{proof}

The following lemma provides the needed estimates for proving the faster decay rates of the microscopic part $\{{\bf I-P_1}\}f_1$. We shall use the macro-micro decomposition:
\begin{equation}\label{I-P equation f_1}
\begin{split}
& \left\{\partial _{t} +v\cdot \nabla _{x} \right\}\{{\bf I-P_1}\}f_{1}
+\mathcal{L}_1f_1 \\
&\quad=  \Gamma _\ast(f_1,f_1)+\{{\bf I-P_1}\}\left(\nabla_x\phi\cdot \left(\nabla_vf_2-vf_2\right)\right)+{\bf  P_1}(v\cdot \na_xf_{1})-v\cdot \na_x {\bf  P_1}f_{1}.
\end{split}
\end{equation}
\begin{lemma}\label{lemma micro 44}
Let $f_{0}\in \testF$ and assume $f$ is the solution constructed in Theorem \ref{local solution}.
Then for any $k=0,\dots, m-2$ with $m\ge 2$, we have
\begin{equation}\label{micro estimate 2}
\begin{split}
& \frac{d}{dt}  {\norm{\nabla^k \{{\bf I-P_1}\} f_1 }_2^2}  +\lambda\norm{\nabla^k\{{\bf I-P_1}\}f_1}_\sigma^2
\\&\quad\le C_m (1+\mathcal{E}_{m;m,0}(f))\left( \norm{\na^{k+1} f_1}_2^2+\sum_{j=0}^k \norm{\nabla^j f_2}_2^2+\norm{ \nabla_x\phi}_2^2\right)
\\&\qquad+C\sqrt{\mathcal{E}_{m;m,0}(f)}\norm{\na^{k}\{{\bf I-P_1}\} f_1}_\sigma^2.
\end{split}
\end{equation}
\end{lemma}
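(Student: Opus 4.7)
I apply $\nabla^k$ to the macro--micro equation \eqref{I-P equation f_1} and take the $L^2_{x,v}$ pairing with $\nabla^k\{{\bf I-P_1}\}f_1$. The transport term $(v\cdot\nabla_x\{{\bf I-P_1}\}f_1,\{{\bf I-P_1}\}f_1)$ vanishes by integration by parts in $x$, and the coercivity \eqref{positive L1L2} together with $\mathcal{L}_1{\bf P_1}=0$ produces the dissipation $\lambda\norm{\nabla^k\{{\bf I-P_1}\}f_1}_\sigma^2$ on the left-hand side. It then remains to control the four source terms on the right of \eqref{I-P equation f_1}, each paired against $\nabla^k\{{\bf I-P_1}\}f_1$.

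The macroscopic correction $\nabla^k({\bf P_1}(v\cdot\nabla_xf_1)-v\cdot\nabla_x{\bf P_1}f_1)$ takes values in the finite-dimensional space spanned by polynomials in $v$ times $\sqrt{\mu}$, so its $\sigma$-norm is dominated by $\norm{\nabla^{k+1}f_1}_2$; Cauchy--Schwartz and Young then produce the contribution $\eta\norm{\nabla^k\{{\bf I-P_1}\}f_1}_\sigma^2+C\norm{\nabla^{k+1}f_1}_2^2$. For the electric-field term $\nabla^k\{{\bf I-P_1}\}(\nabla_x\phi\cdot(\nabla_v-v)f_2)$, I expand by Leibniz, integrate by parts in $v$ to transfer $\nabla_v$ onto a Maxwellian-weighted test function, and distribute spatial derivatives between $\nabla^j\nabla_x\phi$ and $\nabla^{k-j}f_2$. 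The restriction $k\le m-2$ lets me place one factor in $L^\infty_x$ via the Sobolev embedding $H^2_x\hookrightarrow L^\infty_x$ while keeping the total spatial derivative count within the range controlled by $\mathcal{E}_{m;m,0}(f)$, yielding the required contribution $C_m(1+\mathcal{E}_{m;m,0}(f))\bigl(\sum_{j=0}^k\norm{\nabla^j f_2}_2^2+\norm{\nabla_x\phi}_2^2\bigr)$.

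The main obstacle is the nonlinear collision term $(\nabla^k\Gamma_\ast(f_1,f_1),\nabla^k\{{\bf I-P_1}\}f_1)$. Invoking Lemma \ref{nonlinear c} together with the Sobolev interpolation already used to derive \eqref{0000} in the proof of Lemma \ref{f_1 further en es}, I bound this by $C\sqrt{\mathcal{E}_{m;m,0}(f)}\,\norm{\nabla^{k+1}f_1}_\sigma\,\norm{\nabla^k\{{\bf I-P_1}\}f_1}_\sigma$. To convert the $\sigma$-norm on $\nabla^{k+1}f_1$ into the $L^2$-norm appearing on the right-hand side I split $\nabla^{k+1}f_1={\bf P_1}\nabla^{k+1}f_1+\{{\bf I-P_1}\}\nabla^{k+1}f_1$: the macroscopic part obeys $\norm{{\bf P_1}\nabla^{k+1}f_1}_\sigma\lesssim\norm{\nabla^{k+1}f_1}_2$ by Maxwellian decay and contributes $C\mathcal{E}_{m;m,0}(f)\norm{\nabla^{k+1}f_1}_2^2$ after Young, whereas the microscopic part uses $\norm{\{{\bf I-P_1}\}\nabla^{k+1}f_1}_\sigma\lesssim\sqrt{\mathcal{E}_{m;m,0}(f)}$ --- a bound legitimate precisely because $k+1\le m-1$ leaves one velocity derivative of $\nabla^{k+1}f_1$ still controlled by $\mathcal{E}_{m;m,0}(f)$ --- combined with Cauchy--Schwartz to yield the absorbable error $C\sqrt{\mathcal{E}_{m;m,0}(f)}\norm{\nabla^k\{{\bf I-P_1}\}f_1}_\sigma^2$. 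Collecting all contributions and choosing $\eta$ sufficiently small then gives the stated inequality.
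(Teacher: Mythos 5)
You apply $\nabla^k$ to \eqref{I-P equation f_1} and pair with $\nabla^k\{{\bf I-P_1}\}f_1$, and your handling of the transport term, the coercivity of $\mathcal{L}_1$, the macroscopic correction term and the electric-field term is essentially the paper's. The departure is in the nonlinear collision term. The paper splits the inner factor $\nabla^{k-j}f_1={\bf P_1}\nabla^{k-j}f_1+\{{\bf I-P_1}\}\nabla^{k-j}f_1$ \emph{before} interpolating (producing $I_{11},I_{12}$); you interpolate the whole product first, as in \eqref{0000}, to get $\sqrt{\mathcal{E}_{m;m,0}(f)}\,\norm{\nabla^{k+1}f_1}_\sigma\,\norm{\nabla^k\{{\bf I-P_1}\}f_1}_\sigma$, and only then split $\nabla^{k+1}f_1$.

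That reordering opens a genuine gap in the microscopic piece. Write $D:=\norm{\nabla^k\{{\bf I-P_1}\}f_1}_\sigma$. Once you invoke the static bound $\norm{\{{\bf I-P_1}\}\nabla^{k+1}f_1}_\sigma\lesssim\sqrt{\mathcal{E}_{m;m,0}(f)}$, that contribution is
\[
\sqrt{\mathcal{E}_{m;m,0}(f)}\cdot\norm{\{{\bf I-P_1}\}\nabla^{k+1}f_1}_\sigma\cdot D\ \lesssim\ \mathcal{E}_{m;m,0}(f)\,D,
\]
which is \emph{linear} in the dissipation factor $D$, not quadratic. No Cauchy--Schwartz or Young step converts $\mathcal{E}\,D$ into the required $C\sqrt{\mathcal{E}}\,D^2$: the inequality $\mathcal{E}\,D\lesssim\sqrt{\mathcal{E}}\,D^2$ would demand $\sqrt{\mathcal{E}}\lesssim D$, which is the wrong direction (one only has $D\lesssim\sqrt{\mathcal{E}}$, and $D$ decays), while any weighted Young split $\mathcal{E}\,D\le\eta D^2+C_\eta\mathcal{E}^a$ leaves a constant-order remainder $C_\eta\mathcal{E}^a$ that is absent from the right-hand side of \eqref{micro estimate 2} and, once inserted into \eqref{99999}, would wreck the decay rate \eqref{decay f1 micro} that the lemma is designed to produce. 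The cure is the paper's ordering: splitting ${\bf P_1}/\{{\bf I-P_1}\}$ on the inner argument of $\Gamma_\ast$ \emph{before} interpolating lets the interpolation of the microscopic piece $I_{12}$ deliver a second copy of the dissipation $\norm{\nabla^k\{{\bf I-P_1}\}f_1}_\sigma$ instead of a static $\sqrt{\mathcal{E}_{m;m,0}(f)}$, so the closing term is genuinely of the form $\sqrt{\mathcal{E}_{m;m,0}(f)}\,D^2$ and absorbable. You should restructure your estimate of the collision term along those lines.
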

\begin{proof}
The standard $\na^k$ energy estimates on \eqref{I-P equation f_1} yields for $k=0,\dots,m-2$,
\begin{equation}
\begin{split}
& \frac{d}{dt} \frac{\norm{\nabla^k \{{\bf I-P_1}\} f_1 }_2^2}{2} +\lambda\norm{\nabla^k\{{\bf I-P_1}\}f_1}_\sigma^2 \\&\quad\le
\sum_{0\le j\le k} C_k^j\left|\left(  \Gamma_\ast(\nabla^{j}f_1 ,\nabla^{k-j} f_1) ,\nabla^k\{{\bf I-P_1}\} f_1\right )\right|
\\&\qquad+\sum_{0\le j\le k} C_k^j \left|\left(   \nabla^{j}\nabla_x\phi\cdot \nabla^{k-j}(\nabla_v f_2-vf_2),\nabla^k\{{\bf I-P_1}\} f_1\right )\right|
\\&\qquad+  \left|\left(   \nabla^{k}\left(v\cdot \na_x {\bf  P_1}f_{1}-{\bf  P_1}(v\cdot \na_xf_{1})\right),\nabla^k\{{\bf I-P_1}\} f_1\right )\right|
:= I_1+I_2+I_3.
\end{split}
\end{equation}

By the estimate \eqref{ga es 0} of Lemma \ref{nonlinear c}, we obtain
\begin{equation}
\begin{split}
&\left|\left(  \Gamma_\ast(\nabla^{j}f_1 ,\nabla^{k-j} f_1) ,\nabla^k\{{\bf I-P_1}\} f_1\right )\right|\lesssim \norm{\norms{\mu^\delta \nabla^j f_1}_2\norms{\nabla^{k-j}f_1}_\sigma}_2 \norm{\nabla^k \{{\bf I-P_1}\}f_1}_\sigma
\\&\quad\lesssim \norm{\norms{\mu^\delta \nabla^j f_1}_2\left(\norms{\nabla^{k-j} {\bf P_1}f_1}_2+\norms{\nabla^{k-j} \{{\bf I- P_1}\}{f_1}}_\sigma\right)}_2 \norm{\nabla^k \{{\bf I-P_1}\}f_1}_\sigma
\\&\quad:=I_{11}+I_{12}.
\end{split}
\end{equation}
As in \eqref{0000}, we have
\begin{equation}
I_{11}\le \eta\norm{\nabla^k \{{\bf I-P_1}\}f_1}_\sigma+C_{m,\eta} {\mathcal{E}_{m;m,0}(f)}\norm{\nabla^{k+1}f_1}_2^2.
\end{equation}
Similarly, we can obtain
\begin{equation}
I_{12}\lesssim \sqrt{\mathcal{E}_{m;m,0}(f)}\norm{\nabla^k \{{\bf I-P_1}\}f_1}_\sigma.
\end{equation}

As in \eqref{f_1 es 4}, we can obtain, since $k\le m-2$,
\begin{equation}
I_2\le \eta\norm{\nabla^k \{{\bf I-P_1}\}f_1}_\sigma+C_{m,\eta} \sum_{j=0}^k  \norm{ \nabla^j\nabla_x\phi}_2^2.
\end{equation}
Clearly, we have
\begin{equation}
I_3\lesssim \eta\norm{\na^{k}\{{\bf I-P_1}\} f_1}_\sigma^2+C_\eta \norm{\na^{k+1} f_1}_2^2.
\end{equation}
We then conclude the lemma by taking $\eta$ small.
\end{proof}

We now derive the evolution of the negative Sobolev norms of $f_1$. In order to estimate the nonlinear terms, we need to
restrict ourselves to that $s\in (0,3/2)$.
\begin{lemma}\label{lemma H-s}
Let $f_{0}\in \testF$ and assume $f$ is the solution constructed in Theorem \ref{local solution}. For $s\in (0, 1/2]$, we have
\begin{equation}\label{H-s1}
\frac{d}{dt}\norm{\Lambda^{-s}f_1}_2^2+\lambda \norm{\Lambda^{-s}\{{\bf I-P_1}\}f_1}_\sigma^2\lesssim \left(\norm{\Lambda^{-s}f_1}_2+\mathcal{E}_{2;2,0}\right)\mathcal{D}_{2;2,0};
\end{equation}
and for $s\in (1/2, 3/2)$, we have
\begin{equation}\label{H-s2}
 \frac{d}{dt}\norm{\Lambda^{-s}f_1}_2^2+\lambda\norm{\Lambda^{-s}\{{\bf I-P}_1\}f_1}_\sigma^2
 \lesssim \left(\norm{\Lambda^{-s}f_1}_2+\mathcal{E}_{2;2,0}\right)\mathcal{D}_{2;2,0}+\norm{ f_1
}_2^{2s+1}\norm{ \nabla f_1}_2^{3-2s}.
\end{equation}
\end{lemma}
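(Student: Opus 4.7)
The plan is to apply $\Lambda^{-s}$ to \eqref{f1 eq 1} and take the $L^2_{x,v}$ inner product with $\Lambda^{-s}f_1$. Since $\Lambda^{-s}$ acts only on $x$, it commutes with $v\cdot\nabla_x$ and with $\mathcal{L}_1$: the transport term vanishes after integration by parts in $x$, while the linear collision term, via \eqref{positive L1L2}, produces the dissipation
\begin{equation*}
\left(\mathcal{L}_1\Lambda^{-s}f_1,\Lambda^{-s}f_1\right)\ge \lambda\norm{\Lambda^{-s}\{{\bf I-P_1}\}f_1}_\sigma^2.
\end{equation*}
The remaining task is to bound the two nonlinear contributions
$N_\Gamma := |(\Lambda^{-s}\Gamma_\ast(f_1,f_1),\Lambda^{-s}f_1)|$ and
$N_\phi := |(\Lambda^{-s}(\nabla_x\phi\cdot(\nabla_vf_2-vf_2)),\Lambda^{-s}f_1)|$
by the right-hand sides of \eqref{H-s1}--\eqref{H-s2}.

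For $N_\Gamma$, Cauchy-Schwarz in $x,v$ yields $N_\Gamma\le \norm{\Lambda^{-s}f_1}_2\cdot\norm{\Lambda^{-s}\Gamma_\ast(f_1,f_1)}_2$. Applying Lemma \ref{Riesz lemma} fiberwise in $v$ and then Lemma \ref{Minkowski} gives
\begin{equation*}
\norm{\Lambda^{-s}\Gamma_\ast(f_1,f_1)}_2\lesssim \norm{\Gamma_\ast(f_1,f_1)}_{L^p_xL^2_v},\quad 1/p=1/2+s/3.
\end{equation*}
A pointwise-in-$x$ variant of \eqref{ga es 0} bounds $|\Gamma_\ast(f_1,f_1)(x,\cdot)|_{L^2_v}$ by a product of the form $|\mu^\delta f_1(x,\cdot)|_2\,|f_1(x,\cdot)|_\sigma$, after which H\"older's inequality in $x$ with exponents $1/p_1+1/p_2=1/p$ and Sobolev embedding control one factor by $\mathcal{E}_{2;2,0}^{1/2}$ and the other by $\mathcal{D}_{2;2,0}^{1/2}$. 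In the range $s\in(0,1/2]$ we have $p\ge 3/2$, so standard Sobolev inclusions $L^{p_1}_x\supset H^1_x$ suffice and we obtain $N_\Gamma\lesssim\norm{\Lambda^{-s}f_1}_2\sqrt{\mathcal{E}_{2;2,0}}\sqrt{\mathcal{D}_{2;2,0}}$, which via Young's inequality is absorbed into $(\norm{\Lambda^{-s}f_1}_2+\mathcal{E}_{2;2,0})\mathcal{D}_{2;2,0}$. For $s\in(1/2,3/2)$ one has $p<3/2$, and standard Sobolev embedding no longer inserts $\|f_1\|_{L^{p_1}_x}$ into $\mathcal{E}_{2;2,0}$; we instead invoke the Gagliardo-Nirenberg interpolation of Lemma \ref{interpolation lemma} with $\theta=(2s+1)/4$, yielding $\norm{f_1}_{L^{p_1}_xL^2_v}^2\lesssim\norm{f_1}_2^{2s+1}\norm{\nabla f_1}_2^{1-2s+\cdot}$, which together with the $\sqrt{\mathcal{D}_{2;2,0}}$ factor reproduces the extra term $\norm{f_1}_2^{2s+1}\norm{\nabla f_1}_2^{3-2s}$ appearing in \eqref{H-s2}.

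For $N_\phi$, after integrating by parts in $v$ to move $\nabla_v$ off $f_2$ onto $\Lambda^{-s}f_1$, the same sequence Riesz--Minkowski--H\"older reduces matters to controlling $\norm{\nabla_x\phi\, f_2}_{L^p_xL^2_v}$ in the factor produced by Cauchy-Schwarz. Since $\norm{\nabla_x\phi}_{L^r_x}\lesssim\norm{f_2}_{L^{r'}_x L^2_v}$ for appropriate $r,r'$ by the Poisson equation and $\|f_2\|_2^2\le\mathcal{D}_{2;2,0}$, both dichotomies in $s$ are handled exactly as for $N_\Gamma$, and the $\nabla f_1$ interpolation enters for $s>1/2$ in the same way.

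The main obstacle I anticipate is the range $s\in(1/2,3/2)$ for $N_\Gamma$: identifying the precise $L^{p_1},L^{p_2}$ split of $\Gamma_\ast$ such that one factor lands in a norm controlled by $\mathcal{D}_{2;2,0}$ while the Gagliardo-Nirenberg interpolation of the other produces exactly the powers $(2s+1)$ and $(3-2s)$ requires careful bookkeeping tied to $1/p=1/2+s/3$, and verifying that the interpolation parameter lies in $[0,1]$ throughout---the constraint $s<3/2$ is precisely what keeps the exponents admissible, as remarked after Theorem \ref{further decay}.
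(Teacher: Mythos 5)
The overall scaffolding---apply $\Lambda^{-s}$ to \eqref{f1 eq 1}, kill the transport term by integration by parts, use \eqref{positive L1L2} for coercivity, then estimate the two nonlinear terms via Lemma~\ref{Riesz lemma} plus Minkowski/H\"older with the dichotomy at $s=1/2$ entering through $1/p=1/2+s/3$---matches the paper. But your treatment of $N_\Gamma$ has a genuine gap. You begin with the full Cauchy--Schwarz $N_\Gamma\le\norm{\Lambda^{-s}f_1}_2\,\norm{\Lambda^{-s}\Gamma_\ast(f_1,f_1)}_2$ and then invoke a ``pointwise-in-$x$ variant of \eqref{ga es 0}'' to bound $\norms{\Gamma_\ast(f_1,f_1)(x,\cdot)}_{2}$ by $\norms{\mu^\delta f_1}_2\,\norms{f_1}_\sigma$. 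No such fiberwise $L^2_v$ bound exists: \eqref{ga es 0} is a \emph{trilinear} estimate $\Llangle\Gamma_\ast[g_1,g_2],g_3\Rrangle\lesssim\norms{\mu^\delta g_1}_2\norms{g_2}_\sigma\norms{g_3}_\sigma$, and the $\norms{g_3}_\sigma$ (not $\norms{g_3}_2$) on the right is essential because the Landau operator has a divergence-in-$v$ structure---$\Gamma_\ast(g,h)$ contains $\nabla_v\cdot\{\Phi\ast[\cdots]\nabla_v h\}$---so dual-pairing against a mere $L^2_v$ function would force a second $v$-derivative onto $h$. Consequently $\Gamma_\ast(f_1,f_1)$ does not lie in $L^2_v$ with a norm controlled by $\mathcal{E}_{2;2,0}$ and $\mathcal{D}_{2;2,0}$, and the factoring $N_\Gamma\lesssim\norm{\Lambda^{-s}f_1}_2\sqrt{\mathcal{E}_{2;2,0}}\sqrt{\mathcal{D}_{2;2,0}}$ is not justified.

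The paper sidesteps this by first exploiting collision invariance to replace the test function, writing $N_\Gamma=\left(\Lambda^{-s}\Gamma_\ast(f_1,f_1),\Lambda^{-s}\{{\bf I-P_1}\}f_1\right)$, then expanding $\Gamma_\ast$ via its explicit representation (Lemma~1 of \cite{G02}) and integrating by parts in $v$. This moves the loose $\partial_{v_i}$ onto $\Lambda^{-s}\{{\bf I-P_1}\}f_1$, which---after inserting matching $\langle v\rangle^{\pm 3/2}$ weights and recalling \eqref{sigma norm =}---is controlled by $\norm{\Lambda^{-s}\{{\bf I-P_1}\}f_1}_\sigma$, exactly the dissipation on the left of \eqref{H-s1}--\eqref{H-s2}, and therefore absorbed at Young cost $\eta$. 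The residual factor is then sent to $L^p_xL^2_v$ via Lemma~\ref{Riesz lemma}, and the $s\le 1/2$ versus $s>1/2$ bookkeeping that you describe (Sobolev for the first, the $\norm{f_1}_2^{s+1/2}\norm{\nabla_x f_1}_2^{3/2-s}$ interpolation for the second) does apply at that point. The same ${\bf P_1}/\{{\bf I-P_1}\}$ split is also needed for $N_\phi$ after the $v$-integration by parts: only the macroscopic projection may be paired against $\norm{\Lambda^{-s}f_1}_2$ (giving the $\norm{\Lambda^{-s}f_1}_2\mathcal{D}_{2;2,0}$ contribution), while the microscopic piece must go to the dissipation---a distinction your sketch leaves implicit.
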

\begin{proof}
The standard $\Lambda^{-s}$ energy estimates on \eqref{I-P equation f_1} yields for $0<s<3/2$,
\begin{equation}\label{Hs es 00}
\begin{split}
& \frac{d}{dt}\frac{\norm{\Lambda^{-s}f_1}_2^2}{2}+ \lambda\norm{\Lambda^{-s}\{{\bf I-P_1}\} f_1}_\nu^2
\\&\quad \le \left(\Lambda^{-s}\Gamma_\ast(f_1,f_1),\Lambda^{-s}f_1\right)+\left(\Lambda^{-s}\left(\nabla_x\phi\cdot \left(\nabla_vf_2-vf_2\right)\right), \Lambda^{-s} f_1\right):=I_1+I_2.
\end{split}
\end{equation}

By the collision invariant property and the representation of $\Gamma_\ast$ as Lemma 1 in \cite{G02}, we obtain
\begin{equation}\label{Hs es 4}
\begin{split}
&\left(\Lambda^{-s}\Gamma_\ast(f_1,f_1),\Lambda^{-s}f_1\right) =\left(\Lambda^{-s}\Gamma_\ast(f_1,f_1), \Lambda^{-s}\{{\bf I-P_1}\}f_1\right)
\\&\quad= \left(\Lambda^{-s}\left(\partial _i \left( \left\{\Phi ^{ij}*[\mu^{1/2} f_1]\right\}\partial _jf_1 \right)
-
\left\{\Phi^{ij}*[v_i\mu^{1/2} f_1]\right\} \partial _j f_1\right.\right.
\\
&\qquad\qquad\ \left.\left.\ -
\partial _i \left(\left\{\Phi ^{ij}*[\mu^{1/2}\partial _j f_1]\right\} f_1 \right)
+
\left\{\Phi ^{ij}*[v_i\mu^{1/2}\partial _j f_1]\right\}  f_1\right), \Lambda^{-s}\{{\bf I-P_1}\}f_1\right)
\\&\quad= -\left( \Lambda^{-s}\left(\left\{\Phi ^{ij}*[\mu^{1/2} f_1]\right\}\partial _jf_1
  -
 \left\{\Phi ^{ij}*[\mu^{1/2}\partial _j f_1]\right\} f_1
, \partial _i  \Lambda^{-s}\{{\bf I-P_1}\}f_1\right)\right.
\\&\qquad\left.-\left(\Lambda^{-s}\left(
\left\{\Phi^{ij}*[v_i\mu^{1/2} f_1]\right\} \partial _j f_1
-
\left\{\Phi ^{ij}*[v_i\mu^{1/2}\partial _j f_1]\right\}  f_1\right)\right), \Lambda^{-s}\{{\bf I-P_1}\}f_1\right)
\\&\quad:=I_{11}+I_{12}+I_{13}+I_{14}.
\end{split}
\end{equation}

We first estimate the term $I_{11}$. Since $0<s<3/2$, we let $1<p<2$ to be with ${1}/{2}+{s}/3={1}/{p}$. By the estimate
\eqref{Riesz estimate} for $\Lambda^{-s}$ in Lemma \ref{Riesz lemma}, Minkowski's integral inequality \eqref{minkowski inequality} in Lemma \ref{Minkowski}, and Lemma 2 in \cite{G02}, we have
\begin{equation}\label{023}
\begin{split}
I_{11}&= -\left( \Lambda^{-s}\left(\langle v\rangle ^{3/2}\left\{\Phi ^{ij}*[\mu^{1/2} f_1]\right\}\partial _jf_1
  \right)
, \langle v\rangle ^{-3/2}\partial _i  \Lambda^{-s}\{{\bf I-P_1}\}f_1\right)
\\&\lesssim \norm{\Lambda^{-s}\left(\langle v\rangle ^{3/2}\left\{\Phi ^{ij}*[\mu^{1/2} f_1]\right\}\partial _jf_1
  \right)}_2\norm{\Lambda^{-s}\{{\bf I-P_1}\}f_1}_\sigma
  \\&\lesssim C_\eta \norm{  \langle v\rangle ^{3/2}\left\{\Phi ^{ij}*[\mu^{1/2} f_1]\right\}\partial _jf_1
   }_{L^p_xL^2_v}+\eta\norm{\Lambda^{-s}\{{\bf I-P_1}\}f_1}_\sigma^2
     \\&\lesssim C_\eta \norm{\norms{\mu^{\delta} f_1}_2 ^2\norms{\langle v\rangle ^{1/2}\nabla_vf_1}_2^2
   }_{L^p_x}+\eta\norm{\Lambda^{-s}\{{\bf I-P_1}\}f_1}_\sigma^2.
\end{split}
\end{equation}
We shall now use the split $f_1={\bf P_1}f_1+\{{\bf I-P_1}\}f_1$: for the microscopic part, we take $L^{3/s}-L^2$ in the $L^p_x$ norm above and we further bound it by
\begin{equation}
\norm{ f_1}_{L^2_v L^{ \frac{3}{s}}_x}^2 \norm{\langle v\rangle ^{1/2}\nabla_v\{\FI-\FP_1\}f_1}_2^2
\lesssim \norm{ f_1}_{L^2_v H^2_x}^2 \norm{\{\FI-\FP_1\}f_1}_{\sigma, \langle v\rangle^2}^2
\lesssim  {\mathcal{E}_{2;2,0}}{\mathcal{D}_{2;2,0}};
\end{equation}
while for the hydrodynamic part, we shall separate the estimates according to the value of $s$. If $0< s\le 1/2$, then $3/s\ge 6$, we use
Sobolev's inequality to have
\begin{equation}\label{Hs es 7}
\norm{ \mu^\delta f_1}_{L^2_v L^{ \frac{3}{s}}_x}^2 \norm{\langle v\rangle ^{1/2}\nabla_v \FP_1 f_1}_2^2\lesssim\norm{f_1}_{L_v^2L_x^\frac{3}{s}}^2\norm{f_1}_2^2
\lesssim  \norm{f_1}_2^2  \sum_{1\le |\gamma|\le 2}\norm{\pa^\ga f_1}_{\sigma}^2\lesssim  {\mathcal{E}_{2;2,0}}{\mathcal{D}_{2;2,0}};
\end{equation}
and if $s\in(1/2,3/2)$, then $2<3/s<6$, we use the (different) Sobolev interpolation and H\"older's inequality to have
\begin{equation}\label{Hs es 8}
\norm{f_1}_{L_v^2L_x^\frac{3}{s}}\norm{f_1}_2\lesssim \norm{ f_1 } ^{s-1/2}_2\norm{\nabla_x f_1 }_2^{3/2-s} \norm{ f_1 }_2=\norm{ f_1 } _2 ^{s+1/2}\norm{\nabla_x f_1 }_2^{3/2-s}.
\end{equation}

For the second term $I_{12}$, as in \eqref{023} we have
\begin{equation}
I_2\lesssim C_\eta \norm{\norms{\mu^{\delta}\nabla_v f_1}_2  \norms{\langle v\rangle ^{1/2}f_1}_2
   }_{L^p_x}^2+\eta\norm{\Lambda^{-s}\{{\bf I-P_1}\}f_1}_\sigma^2
 .
\end{equation}
We again use the split: for the hydrodynamic part, we can estimate it as for $I_1$; while for the microscopic part, we take $L^{2p}-L^{2p}$ in $L^p_x$ norm to get
\begin{equation}
\begin{split}
 \norm{\norms{\mu^{\delta}\nabla_v f_1}_2   \norms{\langle v\rangle ^{1/2}\{\FI-\FP_1\}f_1}_2
   }_{L^p_x}^2
& \lesssim  \norm{ \mu^{\delta}\nabla_v f_1}_{L^{2p}_xL^2_v}^2   \norm{\langle v\rangle ^{1/2}\{\FI-\FP_1\}f_1
   }_{L^{2p}_xL^2_v}^2
    \\& \lesssim  \sum_{|\gamma|\le 1}\norm{ \mu^{\delta}\pa^\ga\nabla_v f_1}_{2}^2   \norm{\langle v\rangle ^{1/2}\pa^\ga\{\FI-\FP_1\}f_1
   }_2^2
   \\& \lesssim  {\mathcal{E}_{2;2,0}}\norm{ \pa^\ga\{\FI-\FP_1\}f_1
   }_{\sigma,\langle v\rangle }^2   \lesssim  {\mathcal{E}_{2;2,0}} {\mathcal{D}_{2;2,0}}.
   \end{split}
\end{equation}
Note that $I_{13}$ and $I_{14}$ can be estimated in the similar way and have the same upper bounds.

Now for the term $I_2$, we integrate by parts in $v$ and recall  \eqref{sigma norm =} to have
\begin{equation}\label{Hs es 0}
\begin{split}
I_2&=-\left(\Lambda^{-s}\left(\nabla_x\phi f_2 \right), \Lambda^{-s}(v f+\nabla_v  f_1)\right)
\\& \lesssim \norm{\Lambda^{-s}\left(\nabla_x\phi \langle v\rangle ^{3/2}f_2\right)}_2\norm{ \Lambda^{-s} f_1}_\sigma
 \lesssim \norm{ \norms{\nabla_x\phi} \norms{\langle v\rangle ^{3/2}f_2}_2}_{L^p_x}\norm{ \Lambda^{-s} f_1}_\sigma
\\& \lesssim \norm{  \nabla_x\phi}_{L_x^{\frac{3}{s}}} \norm{f_2}_{\sigma,\langle v\rangle ^2}\left(\norm{ \Lambda^{-s} {\FP_1}f_1}_\sigma+\norm{ \Lambda^{-s} \{\FI-\FP\}f_1}_\sigma\right)
\\& \lesssim  \mathcal{D}_{2;2,0}(f)  \norm{ \Lambda^{-s}  f_1}_2 +C_\eta\mathcal{E}_{2;2,0}(f)\mathcal{D}_{2;2,0}(f)+ \eta\norm{ \Lambda^{-s} \{\FI-\FP\}f_1}_\sigma^2.
\end{split}
\end{equation}

Consequently, in light of \eqref{Hs es 4}--\eqref{Hs es 0}, by taking $\eta$ sufficiently small, we deduce that \eqref{H-s1} holds for $s\in(0,1/2]$ and that \eqref{H-s2} holds for $s\in(1/2,3/2)$.
\end{proof}

Now we are ready to prove Theorem \ref{further decay}.
\begin{proof}[Proof of Theorem \ref{further decay}]
We only need to prove the theorem for the smooth initial data, and for a general datum  we can use a smooth
approximation.

\textit{Step 1. Decay of }$f_2=f_+-f_-$.

Suppose that $\mathcal{E}_{m;l,0}(f_0)\le M=M(m,l)$  is small for $l-\frac{3}{4}\ge m\ge 2$, then Theorem \ref{main theorem} implies that there exists a unique global $\mathcal{E}_{m;l,0}$ solution $f$, and $\mathcal{E}_{m;l,0}(f(t))\lesssim P_{m,l}(\mathcal{E}_{m;l,0}(f_0)) \le M$ is small for all time $t$. Since $l\ge m+\frac{3}{4}$, by Lemma \ref{f_2 further en es}, we have that for $\ell=1,\dots,m$,
 \begin{equation}\label{end 1}
  \frac{d}{dt}\mathcal{E}_0^\ell(f_2)+ \lambda\left(\sum_{k=0}^\ell  \norm{\nabla^k f_2 }_\sigma^2+ \norm{ \nabla_x\phi}_2^2\right)
\le 0.
\end{equation}

We now establish the polynomial decay by applying the interpolation method (among velocity moments) developed in \cite{SG06}. By H\"older's inequality, for $k=0,\dots,\ell$ with $1\le \ell \le m$ we have
 \begin{equation}\label{10101010}
 \begin{split}
\norm{\nabla^k f_2 }_2
&=\norm{\langle v\rangle^{-1/2}\nabla^k f_2 }_{2}^{\frac{4(l-\ell)}{4(l-\ell)+1}}\norm{\langle v\rangle^{4(l-\ell)}\nabla^k f_2 }_{2}^{\frac{1}{4(l-\ell)+1}}
\\&\le \norm{\nabla^k f_2 }_{\sigma}^{\frac{4(l-\ell)}{4(l-\ell)+1}}\left\{\sqrt{\mathcal{E}_{m;l,0}(f)}\right\}^{\frac{1}{4(l-\ell)+1}}
.\end{split}
\end{equation}
Hence, we deduce from \eqref{end 1} that
 \begin{equation}
  \frac{d}{dt}\mathcal{E}_0^\ell(f_2)+ \la\left\{ {\sup_{0\le \tau\le T}\mathcal{E}_{m;l,0}(f(\tau))}\right\}^{-\frac{1}{4(l-\ell)}} \left\{\mathcal{E}_0^\ell(f_2)\right\}^{1+\frac{1}{4(l-\ell)}}
\le 0.
\end{equation}
Solving this inequality directly, we obtain
 \begin{equation}\label{aaa}
  \begin{split}
\mathcal{E}_0^\ell(f_2)(t)&\le\left(\left\{\mathcal{E}_0^\ell(f_2)(0)\right\}^{-\frac{1}{4(l-\ell)}}+ \frac{\la}{4(l-\ell)}\left\{ {\sup_{0\le \tau\le T}\mathcal{E}_{m;l,0}(f(\tau))}\right\}^{-\frac{1}{4(l-\ell)}}  t  \right)^{-4(l-\ell)}
\\&\le C_{l,\ell}\left(1+ t  \right)^{-4(l-\ell)}{\sup_{0\le \tau\le T}\mathcal{E}_{m;l,0}(f(\tau))}.
\end{split}
\end{equation}
This implies the polynomial decay \eqref{polynomial decay k} by taking the square root of the above.

We now prove the stretched exponential decay by applying the splitting method (velocity-time) in \cite{SG08}. Assume in addition that $\mathcal{E}_{m;l,q}(f_0)<+\infty$ with $0<q\ll 1$, Theorem \ref{main theorem} implies that the unique global solution $f$ is of $\mathcal{E}_{m;l,q}$. For any $\theta>0$, $\varepsilon>0$ and $k=0,\dots,m$,
 \begin{equation}\label{909090}
 \begin{split}
 \norm{\nabla^k f_2 }_{\sigma}^2&\gtrsim\int \langle v\rangle^{-1}|\nabla^k f_2|^2=\int_{|v|\le \theta (1+t)^\varepsilon}+\int_{|v|\ge \theta (1+t)^\varepsilon}
 \\&\ge \frac{(1+t)^{-\varepsilon}}{\theta}\int_{|v|\le \theta (1+t)^\varepsilon}  |\nabla^k f_2|^2
 \\&= \frac{(1+t)^{-\varepsilon}}{\theta}\norm{\nabla^k f_2}_2^2-\frac{(1+t)^{-\varepsilon}}{\theta}\int_{|v|\ge \theta (1+t)^\varepsilon}  |\nabla^k f_2|^2,
\end{split}
\end{equation}
and we bound by
 \begin{equation}
\int_{|v|\ge \theta (1+t)^\varepsilon}  |\nabla^k f_2|^2
\le \int e^{q|v|^2}e^{-q\theta^2(1+t)^{2\varepsilon}}|\nabla^k f_2|^2
\le e^{-q\theta^2(1+t)^{2\varepsilon}}\mathcal{E}_{m;l,q}(f) .
\end{equation}
Hence, we deduce from \eqref{end 1} that, taking $\theta\ge 1$,
 \begin{equation}
  \frac{d}{dt}\mathcal{E}_0^m(f_2)+  \frac{\la(1+t)^{-\varepsilon}}{\theta}\mathcal{E}_0^m(f_2)
\le \frac{\la(1+t)^{-\varepsilon}}{\theta}e^{-q\theta^2(1+t)^{2\varepsilon}}\mathcal{E}_{m;l,q}(f) .
\end{equation}
We therefore have, taking $\varepsilon\neq 1$,
\begin{equation}\label{40404040}
  \begin{split}
\frac{d}{dt}\left\{e^{\frac{\la(1+t)^{1-\varepsilon}}{\theta(1-\varepsilon)}}\mathcal{E}_0^m(f_2)(t)\right\}
&\le e^{\frac{\la(1+t)^{1-\varepsilon}}{\theta(1-\varepsilon)}}\frac{\la(1+t)^{-\varepsilon}}{\theta}e^{-q\theta^2(1+t)^{2\varepsilon}}\mathcal{E}_{m;l,q}(f) \\& \le  \sup_{0\le \tau \le T}\mathcal{E}_{m;l,q}(f(\tau)) \frac{\la(1+t)^{-\varepsilon}}{\theta}e^{\frac{\la(1+t)^{1-\varepsilon}}{\theta(1-\varepsilon)}-q\theta^2(1+t)^{2\varepsilon}}.
\end{split}
\end{equation}
Integrating this inequality from $0$ to $t$, we obtain
\begin{equation}\label{bbb}
  \begin{split}
 \mathcal{E}_0^m(f_2)(t)
&\le   e^{-\frac{\la(1+t)^{1-\varepsilon}}{\theta(1-\varepsilon)}}
\left(e^{\frac{\la}{\theta(1-\varepsilon)}}\mathcal{E}_0^m(f_2)(0)+\sup_{0\le \tau\le T}\mathcal{E}_{m;l,q}(f(\tau))\right.
\\ & \left.\qquad\qquad\qquad\qquad\qquad  \times\int_0^t\frac{\la(1+\tau)^{-\varepsilon}}{\theta}e^{\frac{\la(1+\tau)^{1-\varepsilon}}{\theta(1-\varepsilon)}-q\theta^2(1+\tau)^{2\varepsilon}}d\tau \right).
\\ & \le C_{l} e^{-\frac{\la(1+t)^{1-\varepsilon}}{\theta(1-\varepsilon)}}
\sup_{0\le \tau\le T}\mathcal{E}_{m;l,q}(f(\tau)),
\end{split}
\end{equation}
if we have taken $1-\varepsilon\le 2\varepsilon$ and $\theta$ sufficiently large. This implies the stretched exponential decay \eqref{exponential decay k} by taking $\varepsilon=\frac{1}{3}$.

\textit{Step 2. Decay of }$f_1=f_++f_-$.

First, since $l\ge m+3/4$, by Lemma \ref{f_1 further en es}, we have that for $\ell=0,\dots,m-1$,
 \begin{equation}
\frac{d}{dt}\mathcal{E}^m_\ell({f_1})+\la\left(\sum_{k=\ell+1}^m  \norm{\nabla^k f_1 }_\sigma^2+ \norm{\nabla^\ell \{{\bf I-P_1}\}f_1 }_\sigma^2\right)
\lesssim  \sqrt{M} \left(\sum_{k=0}^m \norm{\nabla^k f_2}_\sigma^2+\norm{ \nabla_x\phi}_2^2\right).
\end{equation}
Combining this and \eqref{end 1} with $\ell=m$, we obtain
\begin{equation}\label{end 2}
 \begin{split}
& \frac{d}{dt}\left\{\mathcal{E}_\ell^m ({f_1})+\mathcal{E}_0^m(f_2)\right\}
+\lambda  \left(\sum_{k=\ell+1}^m  \norm{\nabla^k f_1 }_\sigma^2+ \norm{\nabla^\ell \{{\bf I-P_1}\}f_1 }_\sigma^2 \right.\\&\qquad\qquad\qquad\qquad\qquad\qquad\ \left.+\sum_{k=0}^m  \norm{\nabla^k f_2 }_\sigma^2+ \norm{\nabla_x\phi}_2^2\right)\le 0.
 \end{split}
\end{equation}

From \eqref{10101010} again, we have for some $\lambda_0$ depending on the initial data and $l,m,\ell$,
\begin{equation} \label{30303030}
 \begin{split}
&\sum_{k=\ell+1}^m  \norm{\nabla^k f_1 }_\sigma^2+ \norm{\nabla^\ell \{{\bf I-P_1}\}f_1 }_\sigma^2+\sum_{k=0}^m  \norm{\nabla^k f_2 }_\sigma^2+ \norm{\nabla_x\phi}_2^2
\\&\quad\ge\lambda_0\left\{\sum_{k=\ell+1}^m  \norm{\nabla^k f_1 }_2^2+ \norm{\nabla^\ell \{{\bf I-P_1}\}f_1 }_2^2+\mathcal{E}_0^m(f_2)\right\}^{1+\frac{1}{4(l-m)}}.
 \end{split}
\end{equation}
Hence, it suffices to bound the only one remaining term $\norm{\na^\ell {\bf P_1}f_1}_2^2$ in the energy in terms of the dissipation in \eqref{end 2}. We now use an interpolation method (among spatial regularity) together with the splitting method (velocity-time) in \cite{S10}. Assuming for the moment that we have proved \eqref{H-sbound}, then the key point is to do the Sobolev interpolation between the negative and positive Sobolev norms by using Lemma
\ref{interpolation lemma} that for $s\ge 0$ and $\ell+s>0$,
\begin{equation}\label{inter 1'}
\norm{\nabla^\ell  {\bf P_1}f_1}_2\le  \norm{\Lambda^{-s} f_1}_2^{\frac{1}{\ell+1+s}}\norm{\nabla^{\ell+1} {\bf P_1}f_1 }_2^{\frac{\ell+s}{\ell+1+s}}
\le  C_0\norm{\nabla^{\ell+1}  f_1 }_\sigma^{\frac{\ell+s}{\ell+1+s}}.
\end{equation}
Combining this and \eqref{30303030}, we deduce from \eqref{end 2} that
\begin{equation}\label{end 3}
 \frac{d}{dt}\left\{\mathcal{E}_\ell^m ({f_1})+\mathcal{E}_0^m(f_2)\right\}
+\lambda_0 \left\{\mathcal{E}_\ell^m ({f_1})+\mathcal{E}_0^m(f_2)\right\}^{1+\vartheta}\le 0,
\end{equation}
where $\vartheta=\max\left\{\frac{1}{\ell+s},\frac{1}{4(l-m)}\right\}$.
Solving this inequality directly, we obtain in particular
\begin{equation}
\mathcal{E}_\ell^m ({f_1})(t) \le \left({ {\mathcal{E}}_{m;l,0}(f_0)}^{-\vartheta}+\vartheta\lambda_0 t\right)^{-  {1}/{\vartheta}}\le C_0 (1+ t)^{- {1}/{\vartheta}}.
\end{equation}
So, if we want to have the optimal decay rate of $f_1$ for all derivatives of order up to $m-1$, we only need to take $l$ so large that
\begin{equation}
m-1+s\le 4(l-m)\Longleftrightarrow l\ge \frac{5}{4}m+\frac{s-1}{4}.
\end{equation}
This proves the optimal decay \eqref{decay f1} by taking $l\ge \max\{m+\frac{3}{4},\frac{5}{4}m+\frac{s-1}{4}\}$.

Next, by Lemma \ref{lemma micro 44}, we have that for any $k=0,\dots, m-2$,
\begin{equation}\label{micro estimate 2''}
\begin{split}
 \frac{d}{dt}  {\norm{\nabla^k \{{\bf I-P_1}\} f_1 }_2^2}  +\lambda \norm{\nabla^k\{{\bf I-P_1}\}f_1}_\sigma^2
 \le C_m\left( \norm{\na^{k+1} f_1}_2^2+\sum_{j=0}^k \norm{\nabla^j f_2}_2^2+\norm{ \nabla_x\phi}_2^2\right).
\end{split}
\end{equation}
For any $\varepsilon>0$ and $k=0,\dots,m-2$, as in \eqref{909090}, we have
 \begin{equation}
 \norm{\nabla^k\{{\bf I-P_1}\}f_1}_\sigma^2\ge  {(1+t)^{-\varepsilon}} \norm{\nabla^k\{{\bf I-P_1}\}f_1}_2^2-{(1+t)^{-\varepsilon}}\int_{|v|\ge (1+t)^{\varepsilon}}  |\nabla^k f_1|^2,
\end{equation}
and we bound by
 \begin{equation}
 \begin{split}
{(1+t)^{-\varepsilon}}\int_{|v|\ge (1+t)^{\varepsilon}}  |\nabla^k f_1|^2
&\le  {(1+t)^{-\varepsilon}}{(1+t)^{-4(l-k)\varepsilon }}  \int | v |^{4(l-k)} |\nabla^k f_1|^2
\\&\le   {(1+t)^{-(4l-4k+1)\varepsilon }}  \mathcal{E}_{m;l,0}(f) .
\end{split}
\end{equation}
Hence, we deduce from \eqref{micro estimate 2''} that
 \begin{equation}\label{99999}
  \begin{split}
  &\frac{d}{dt}\norm{\nabla^k \{{\bf I-P_1}\} f_1 }_2^2+ \lambda  (1+t)^{-\varepsilon}\norm{\nabla^k \{{\bf I-P_1}\} f_1 }_2^2
\\&\quad\le {(1+t)^{-(4l-4k+1)\varepsilon }}  \mathcal{E}_{m;l,0}(f) +C_m\left(\norm{\na^{k+1} f_1}_2^2+\sum_{j=0}^k \norm{\nabla^j f_2}_2^2+\norm{ \nabla_x\phi}_2^2 \right).
\end{split}
\end{equation}
Denoting
\begin{equation}
\vartheta=\min\left\{(4l-4k+1)\varepsilon,k+1+s,4l-4k,4l-4\right\},
\end{equation}
by \eqref{polynomial decay k}, \eqref{decay f1} and Lemma \ref{basic decay}, we thus deduce from \eqref{99999} that for $\varepsilon\neq 1$,
\begin{equation}
\norm{\nabla^k \{{\bf I-P_1}\} f_1 (t)}_2^2
\le  C_0 e^{-\frac{\lambda (1+t)^{1-\varepsilon}}{ 1-\varepsilon}}\left(1
+\int_0^t e^{\frac{ \lambda (1+\tau)^{1-\varepsilon} }{1-\varepsilon}}{(1+\tau)^{-\vartheta}} d\tau\right)
\le C_0(1+\tau)^{-\vartheta+\varepsilon}.
\end{equation}
So, if we want to have the almost optimal decay rate of $\{{\bf I-P_1}\} f_1$ with any small $\varepsilon$ in \eqref{decay f1 micro} for all derivatives of order up to $m-2$, we only need to take $l$ so large that
\begin{equation}
(4l-4(m-2)+1)\varepsilon,\ 4l-4(m-2)\ge m-1+s \Longleftrightarrow l\ge \frac{m-1+s}{4\varepsilon} +m-\frac{9}{4},
\end{equation}
since $\varepsilon$ is small. This proves the almost optimal decay \eqref{decay f1 micro}.

Finally, we turn back to prove \eqref{H-sbound}. First, for $s\in(0,1/2]$, integrating in time the estimate \eqref{H-s1} of Lemma \ref{lemma H-s}, by the bound \eqref{energy inequality}, we obtain
\begin{equation}
\begin{split}
\norm{\Lambda^{-s}f_1(t)}_2^2 &\le \norm{\Lambda^{-s}f_1(0)}_2^2 +C\int_0^t \left(\norm{\Lambda^{-s}f_1(\tau)}_2+\mathcal{E}_{2;2,0}(\tau)\right)\mathcal{D}_{2;2,0}(\tau)\,d\tau
\\&\le C_0\left(1+\sup_{0\le \tau \le t}\norm{\Lambda^{-s}f_1(\tau)}_2\right).
\end{split}
\end{equation}
By Cauchy's inequality, this together with \eqref{energy inequality} gives \eqref{H-sbound} for $s\in[0,1/2]$, and thus verifies \eqref{decay f1} for $s\in[0,1/2]$. Now we let $s\in(1/2,3/2)$. Observing that now $f_0\in L^2_v\dot{H}_x^{-1/2}$
since $ \dot{H}_x^{-s}\cap  L^2_x\subset  \dot{H}_x^{-s'}$ for any $s'\in [0,s]$, we then deduce from what we have proved for
\eqref{decay f1} with $s=1/2$ that the following decay result holds:
\begin{equation}\label{decay11}
\sum_{\ell\le k\le m}\norm{\nabla^k f_1(t)}_2^2\le C_0(1+t)^{-(\ell+ \frac{1}{2})}\, \hbox{ for } \ell=0, \dots,m-1.
\end{equation}
Hence, by \eqref{decay11} and \eqref{energy inequality}, we deduce from \eqref{H-s2} that for $s\in (1/2,3/2)$,
\begin{equation}
\begin{split}
\norm{\Lambda^{-s}f_1(t)}_2^2 &\le \norm{\Lambda^{-s}f_1(0)}_2^2 +C\int_0^t \left(\norm{\Lambda^{-s}f_1(\tau)}_2+\mathcal{E}_{2;2,0}(\tau)\right)\mathcal{D}_{2;2,0}(\tau)\,d\tau
\\&\quad+C \int_0^t  \norm{ f_1(\tau)
}_2^{2s+1}\norm{ \nabla f_1(\tau)  }_2^{3-2s} \,d\tau
\\&\le C_0+C_0\sup_{0\le \tau \le t}\norm{\Lambda^{-s}f_1(\tau)}_2+C_0\int_0^t(1+\tau)^{-(5/2-s)}\,d\tau
\\&\le C_0\left(1+\sup_{0\le \tau \le t}\norm{\Lambda^{-s}f_1(\tau)}_2\right).
\end{split}
\end{equation}
This proves \eqref{H-sbound} for $s\in (1/2,3/2)$  and thus verifies \eqref{decay f1} for $s\in(1/2,3/2)$. Hence, the proof of Theorem \ref{further decay} is completed.
\end{proof}

\end{document}